\DeclareMathAlphabet{\mathpzc}{OT1}{pzc}{m}{it}
\newcommand\pz{\mathpzc}
\newtheorem{theorem}{Theorem}[subsection]
\newtheorem{proposition}[theorem]{Proposition}
\newtheorem{lemma}[theorem]{Lemma}
\newtheorem{lem}[theorem]{}
\theoremstyle{definition}
\newtheorem{definition}[theorem]{Definition}
\theoremstyle{remark}
\newtheorem{remark}[theorem]{Remark}
\newtheorem{example}[theorem]{Example}
\newcommand{\blem}{\begin{lem} \rm}
\newcommand{\elem}{\end{lem}}
\newtheorem{altdef}[theorem]{\textbf{Alternative Definition}}
\newcommand{\C}{\mathbb{C}}
\renewcommand{\P}{\mathbb{P}}
\newcommand{\on}{\operatorname}
\renewcommand{\ker}{ \on{ker}}
\newcommand\dirac{/\kern-1.2ex\partial} 
\newcommand\qu{/\kern-.7ex/} 
\newcommand\lqu{\backslash \kern-.7ex \backslash} 
\newcommand\dr{r_+ \kern-.7ex - \kern-.7ex r_-}
\newcommand{\bq}{\begin{equation}}
\newcommand{\eq}{\end{equation}}
\newcommand{\ra}{\rightarrow}
\newcommand\sig{\sigma}
\newcommand\bdefn{\begin{definition}}
\newcommand\edefn{\end{definition}}
\newcommand\bea{\begin{eqnarray*}}
\newcommand\eea{\end{eqnarray*}}
\newcommand\bcv{\left[ \begin{array}{r} }
\newcommand\ecv{\end{array} \right] }
\newcommand\bma{\left[ \begin{array} }
\newcommand\ema{\end{array} \right]}
\newcommand\ben{\begin{enumerate}}
\newcommand\een{\end{enumerate}}
\newcommand\beq{\begin{equation}}
\newcommand\eeq{\end{equation}}
\newcommand\bex{\begin{example}}
\newcommand\bsj{\left\{ \begin{array}{rrr} }
\newcommand\esj{\end{array} \right\}}
\newcommand\eex{\end{example}}
\newcommand\sx{*\kern-.5ex_X}
\newcommand\Fuk{{\on{Fuk}}}
\renewcommand\P{\pz{C}}
\newcommand\khs{\pz{Kh}_{symp}}
\newcommand\Sm{\mathcal{S}_m}
\newcommand\nbhd  {neighborhood }
\newcommand\cal {\mathcal}
\newcommand\bb {\mathbb}
\begin{document}

\title{Seidel-Smith cohomology for Tangles}

\author{ Reza Rezazadegan }

\begin{abstract}
We generalize the``symplectic Khovanov cohomology'' of Seidel and Smith \cite{SS} to tangles using the notion of symplectic valued topological field theory introduced by Wehrheim and Woodward \cite{WW}.
\end{abstract}
\keywords{tangle, Seidel-Smith invariant, Khovanov homology, symplectic-valued TFT, Lagrangian correspondence, Floer cohomology}

\maketitle

\section{Introduction}\label{intro}

In the year 2000, Mikhail Khovanov \cite{categorification} introduced a doubly graded homology theory of links which categorifies the Jones polynomial, i.e. whose graded Euler characteristic
equals the Jones polynomial. He later generalized this cohomology theory to even tangles (i.e. tangles with even number of initial and endpoints)\cite{functorvalued}. In that paper he introduced a family of rings $\{H^n\}_{n\in\mathbb{N}}$ and to each $(m,n)$-tangle he assigned  a complex of $(H^m,H^n)$ bimodules. By means of tensor product one obtains, for each tangle, a functor on the category of such complexes modulo  homotopy.
In 2003, P.~Seidel and I.~Smith defined a singly graded link cohomology based on symplectic geometry  which they called ``symplectic Khovanov homology''.  They conjectured that this invariant equaled  Khovanov homology after the collapse of the bigrading. They defined a family $\{\cal{Y}_n\}_{n\in\mathbb{N}}$ of symplectic manifolds and to each braid $\beta\in Br_{2m}$ they assigned a symplectomorphism  $h_\beta$ of $\cal{Y}_m$. (See  section \ref{reviewss}.) The Seidel-Smith invariant of a link $K$
is 
the Floer cohomology of $L$ and $h_\beta(L)$ where $L$ is a specific Lagrangian submanifold of $\cal{Y}_m$ and $\beta$ is any braid representation of $K$. They prove that this is independent of the choice of the braid representation $\beta$.
Later C.~Manolescu \cite{ChiprianSS} gave a more explicit description of the invariant and equipped the chain complex with a second grading, showing that the Euler characteristic of this chain complex equals the Jones polynomial. However it is not known if this grading descends to a grading on cohomology.

In this paper we construct a generalization of the Seidel-Smith invariant to even tangles. 
 To any elementary $(i,j)$-tangle ${T}$ we assign a Lagrangian correspondence $L_T$ between $\cal{Y}_i$ and $\cal{Y}_j$. If $T$ is a braid, we assign to it the graph of the symplectomorphism $h_T$ defined by Seidel and Smith. The remaining  elementary tangles are caps and cups. (See Figure \ref{elementangles}.) To a $(m,m+2)$ cap we assign a vanishing cycle over the diagonal $\Delta\subset \cal{Y}_m \times \cal{Y}_m$. The Lagrangian assigned to a cup is the transpose of this vanishing cycle. See section \ref{func}.
Now any given $(m,n)$-tangle $\cal{T}$ can be decomposed into a composition of elementary ones $$\cal{T}=T_k T_{k-1}\cdots T_1.$$  To $\cal{T}$ we assign the \textit{generalized} Lagrangian correspondence  $$\Phi(\cal{T})=(L_{T_k},L_{T_{k-1}}, \cdots, L_{T_1})$$ between  $\cal{Y}_m$ and $\cal{Y}_n$.
We then prove the following.\\
\\
\large{\textbf{Theorem}} \ref{wellfunc}. \textit{Up to isomorphism of generalized correspondences, $\Phi(\cal{T})$ is independent of the decomposition of $\cal{T}$ into elementary tangles.}\\

This way we obtain two invariants for each $(m,n)$-tangle $T$; The first one is a functor $\Phi_T^\#$ from the generalized Fukaya category of $\cal{Y}_m$ to that of $\cal{Y}_n$. The category used here is an enlargement of the Fukaya category of a Stein manifold to include a special class of noncompact Lagrangians.
The second one is a graded abelian group, denoted $\khs(T)$, which is, roughly, the Floer cohomology of $\Phi(T)$. For this second invariant to be well-defined we first have to deal with the compactness of the involved moduli spaces. The reason is that the Lagrangians assigned to caps and cups are not compact. We prove compactness using standard (but not very well-known) arguments on Lagrangians in manifolds with contact type boundary. In sections \ref{laginstein} and \ref{floercoho} we put together necessary tools for construction of Floer homology of noncompact Lagrangians in Stein manifolds.
From these plus Theorem \ref{wellfunc} and  the Functoriality Theorem  of \cite{WW} we get the following.\\
\\
\large{\textbf{Theorem}} \ref{khswelldef}. \textit{$\khs(T)$ is well-defined and is independent of the decomposition of $T$ into elementary tangles.}\\

An algebraic-geometric equivalent of Khovanov homology has been developed by S.~Cautis and J.~Kamnitzer \cite{CautisK1}. In that paper they assign  to each elementary $(k,l)$-tangle a  Fourier-Mukai kernel between specific algebraic varieties $Y_k$ and $Y_l$. This way they assign to each $(m,n)$-tangle $T$ a functor $\Psi(T)$ from the bounded derived category of equivariant  coherent sheafs on $Y_m$ to that of $Y_n$. They prove that for a link $K$ the cohomology of the chain complex $\Psi(K)(\mathbb{C})$ equals the Khovanov homology of $T$ with diagonal grading.
The functors $\Psi(T)$ and our $\Phi_T^\#$ are expected to be related by mirror symmetry. 
Joel Kamnitzer \cite{Kamnitzer} has recently proposed a method for categorifying all link polynomials from quantum groups.
In this
picture, for a complex reductive group $G$, the symplectic
fibration used by Seidel and Smith ( which is in fact the adjoint quotient map) is replaced by a 
fibration whose total space is the Beilinson-Drinfeld Grassmannian. This
Grassmannian is, roughly speaking, the moduli space of $G^{\vee^{}}$-bundles
on $\mathbb{P}^1$ which are trivial on the complement of a finite set of
points. Here $G^{\vee^{}}$ is the Langlands dual of $G$. When two such points approach each other, one has a similar situation
to that of Seidel-Smith where two eigenvalues come together. Kamnitzer proves
a local neighborhood theorem analogous to that of Seidel and Smith. \\

In a forthcoming paper we show that $\khs(T)$ for an $(m,n)$-tangle is a  bimodule over  $(H^m,H^n)$ and that it  is 
equivalent to Khovanov homology   for $T$ flat (ie. crossingless). We believe that extending the symplectic link invariant to tangles makes the comparison between the combinatorial (Khovanov) and the symplectic (Seidel-Smith) invariants easier and so serves as a small step in understanding the capability of symplectic geometry in  the categorification paradigm.\\



\textbf{Acknowledgments.} I am grateful to my adviser, Christopher Woodward, who suggested the use of generalized Lagrangian correspondences to generalize the Seidel-Smith invariant and who was essential to the formation of this paper. I would also like to thank Eduardo Gonzalez, Ciprian Manolescu, Paul Seidel,  Charles Weibel and the referee of this paper. 


\section{The Work of Seidel and Smith}\label{reviewss}

  In this section we review the construction of Seidel and Smith which we will make use of in the rest of this paper. Most proofs are omitted. The reader is referred to \cite{SS} for  details.  
Denote by $\mathrm{Conf}_m$ the space of all unordered $m$-tuples of  distinct complex numbers  $(z_1,\cdots,z_m).$ Denote by $\mathrm{Conf}^0_m$ the subset of  $\mathrm{Conf}_m$ consisting of $m$-tuples which add up to zero, i.e. $z_1+\cdots+z_m=0$.

\subsection{Transverse slices} The basic reference for this section is \cite{Slodowy}.
Let $G$ be a complex semisimple Lie group and consider the adjoint action of $G$ on its Lie algebra $\mathfrak{g}$. The adjoint quotient map  \smash{$\chi:\frak{g}\rightarrow\frak{g}/G$} sends each element of $\frak{g}$ to its orbit in \smash{$\frak{g}/G$}. A theorem of Chevalley (See \cite{humphreysbook}, Chapter 23) asserts that \smash{$\frak{g}/G$} can be identified with \smash{$\frak{h}/W$} where $\frak{h}$ is a Cartan subalgebra of $\frak{g}$ and $W$ the associated Weyl group. Therefore $\chi$ can be regarded as assigning to each $y\in \frak{g}$ the eigenvalues (or equivalently coefficients of the characteristic polynomial) of the semisimple part of $y$.
\begin{definition}
 A \textit{transverse slice} for the adjoint action  at $x\in \frak{g}$ is a local complex submanifold $\cal{S}$ of $\frak{g}$ containing $x$ which is transverse to the orbit of $x$ and is invariant under the action of the isotropy subgroup $G_x$.
\end{definition}
It is obvious that such an $\cal{S}$ intersects the orbit of any $y$ sufficiently close to $x$ transversely. If $K$ is a local submanifold of $G$  containing the identity such that $T_e K$ is complementary to $\{y\in \frak{g}: [x,y]=0\}$ then it can be easily seen that any other transverse slice
at $x$ lies (locally) in the image of  the map \begin{equation}\label{AdK}\operatorname{Ad}: K\times S\rightarrow \frak{g}.\end{equation}
The Jacobson-Morozov lemma \cite{JMorozov} tells us that if $x\in \frak{g}$ is nilpotent then there are elements $y,h\in \frak{g} $ such that
$$[x,y]=h  \qquad [x,h]=2x  \qquad [y,h]=-2y.$$

Consider the vector field $K$ on $\mathfrak{g}$ given by $\label{fieldK}K(z)=2z-[h,z].$  It defines a $\mathbb{C}^*$ action on $\frak{g}$ given by $\lambda_r(z)=r^2 e^{-\log(r)}ze^{\log(r)h}$ for $r\in \mathbb{C}^*$. The vector field $K$ vanishes at $x$ so $x$ is a fixed point of $\lambda_r$.
A slice at $x$ is called \textit{homogeneous} if it is invariant under $\lambda_r$.

Now we specialize to $\frak{g}=\frak{sl}_{2m}=\frak{sl}_{2m}(\mathbb{C})$. In this case $W=S_n$.  
   Take $x$ to be a nilpotent Jordan block of size $2m$.
Let $\mathcal{S}_m$ be the set of matrices in $\mathfrak{sl}_{2m}$ of the form  \\ \bq\label{matrixS}\left(
                                         \begin{array}{cccccc}
                                           y_1 & 1 &\space & & \space& \\
                                           y_2 & & 1 &  & & \\
                                           \vdots&  &  & \ddots & \\
                                             y_{n-1}& & & & 1\\
                                             y_n &  &  & & 0\\
                                         \end{array}
                                       \right)\eq\\
where $1$ is the $2\times 2$ identity matrix, $y_1\in \mathfrak{sl}_2$ and $y_i\in \mathfrak{gl}_2$ for $i>1$. It is easy to see that $\Sm$ is a homogeneous slice to the orbit of $x$.
  $\chi$ restricted to $\mathrm{Conf}^0_{2m}$ 
  is a differentiable fiber bundle \cite{Slodowy}.
   We denote the fiber of $\chi$ over $t$ by $\mathcal{Y}_{m,t}$, i.e. $\cal{Y}_{m,t}=\chi^{-1}(t)$. If $t=(\mu_1,\ldots,\mu_{2m}) \notin \mathrm{Conf}^0$, by  $\cal{Y}_{m,t}$ we mean $\cal{Y}_{m,t'}$  where $t'=(\mu_1-\sum \mu_i/2m,\ldots, \mu_{2m}-\sum \mu_i/2m).$
Let $E_{y}^\mu$ denote the $\mu$-eigenspace of $y$.

\begin{lemma}\label{2dkernel}
  For any $y\in \Sm$  and $\mu\in \mathbb{C}$ the projection $\mathbb{C}^{2m}\rightarrow \mathbb{C}^2$ onto the first two coordinates gives an injective map $E_y^\mu\rightarrow \mathbb{C}^2$.
  Any eigenspace of any element $y\in \Sm$ has dimension at most two. Moreover the set of elements of $\mathcal{S}_m$ with 2 dimensional kernel can be canonically identified with $\mathcal{S}_{m-1}$ and this identification is compatible with $\chi$.
\end{lemma}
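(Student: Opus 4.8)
The plan is to exploit the companion-type block structure of the matrices in $\eqref{matrixS}$. Decompose $\mathbb{C}^{2m} = (\mathbb{C}^2)^{\oplus m}$ and write a vector as $v = (v_1,\dots,v_m)$ with $v_i \in \mathbb{C}^2$, and write $y\in\Sm$ through its blocks $y_1\in\mathfrak{sl}_2$, $y_2,\dots,y_m\in\mathfrak{gl}_2$. Reading the equation $yv = \mu v$ off block by block gives the recursion $v_{i+1} = \mu v_i - y_i v_1$ for $1\le i\le m-1$ (so in particular $v_2 = (\mu I - y_1)v_1$), together with the single closing relation $y_m v_1 = \mu v_m$. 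The recursion expresses $v_2,\dots,v_m$ in terms of $v_1$, so the projection $\mathbb{C}^{2m}\to\mathbb{C}^2$ onto the first two coordinates is injective on $E_y^\mu$; in particular every eigenspace of every $y\in\Sm$ has dimension at most $2$. (If $P_i(\mu)$ is the matrix polynomial with $P_1 = I$ and $P_{i+1} = \mu P_i - y_i$, then $v\mapsto v_1$ carries $E_y^\mu$ isomorphically onto $\ker\big(y_m - \mu P_m(\mu)\big)\subseteq\mathbb{C}^2$; only the case $\mu = 0$ is needed below.)

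Setting $\mu = 0$, the recursion becomes $v_{i+1} = -y_i v_1$ and the closing relation becomes $y_m v_1 = 0$, so $v\mapsto v_1$ identifies $\ker y$ with $\ker\big(y_m\colon\mathbb{C}^2\to\mathbb{C}^2\big)$. Hence $\dim\ker y = 2$ exactly when $y_m = 0$, and then $\ker y = \{(v_1,-y_1v_1,\dots,-y_{m-1}v_1): v_1\in\mathbb{C}^2\}$. Thus the locus in question is $\{y\in\Sm : y_m = 0\}$. To identify it with $\mathcal{S}_{m-1}$ in an intrinsic way, note that when $y_m = 0$ the last block row of $\eqref{matrixS}$ vanishes, so $\im y\subseteq(\mathbb{C}^2)^{\oplus(m-1)}\oplus 0$; this is an equality by a dimension count, and a one-line check shows the subspace is $y$-invariant. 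In the obvious coordinates on $\im y\cong\mathbb{C}^{2m-2}$ the restriction $y|_{\im y}$ is again of the form $\eqref{matrixS}$ with blocks $(y_1,\dots,y_{m-1})$, hence (the trace condition being inherited from $y_1\in\mathfrak{sl}_2$) an element of $\mathcal{S}_{m-1}$; and $y\mapsto y|_{\im y}$ is a bijection onto $\mathcal{S}_{m-1}$, inverse to padding with a zero block row. Passing to the quotient $\mathbb{C}^{2m}/\ker y$ instead of $\im y$ yields the same map.

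For compatibility with $\chi$ I would record the standard identity, proved by induction on $m$ using the Schur-complement formula for a $2\times 2$ block determinant (the lower-left block $-y_m$ and the lower-right block $\lambda I$ commute): for $y$ of the form $\eqref{matrixS}$,
\[
\det(\lambda I_{2m} - y) = \det\big(\lambda^m I_2 - \lambda^{m-1}y_1 - \cdots - \lambda y_{m-1} - y_m\big).
\]
When $y_m = 0$ the right-hand side factors as $\lambda^2\det\big(\lambda^{m-1}I_2 - \lambda^{m-2}y_1 - \cdots - y_{m-1}\big) = \lambda^2\det(\lambda I_{2m-2} - y')$, where $y'\in\mathcal{S}_{m-1}$ is the element associated to $y$ above. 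So the characteristic polynomial of $y$ is $\lambda^2$ times that of $y'$; equivalently, the spectrum of $y$ is that of $y'$ together with an extra pair of zeros. This is the asserted compatibility: the identification intertwines the restriction of $\chi$ to $\{y_m = 0\}$ with $\chi$ on $\mathcal{S}_{m-1}$, via the inclusion of eigenvalue tuples adjoining $\{0,0\}$.

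Every step above is explicit bookkeeping with the matrices $\eqref{matrixS}$; the only part needing a genuine (though routine) argument is the block-companion determinant identity used for the $\chi$-compatibility, and the only point deserving care is the canonicity of the identification with $\mathcal{S}_{m-1}$, which is why I phrase it through $\im y$ (equivalently $\mathbb{C}^{2m}/\ker y$) rather than through the block coordinates.
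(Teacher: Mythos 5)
Your proof is correct, but it follows a genuinely different route from the paper's. You read the eigenvalue equation $yv=\mu v$ block-by-block off the explicit companion form \eqref{matrixS}, getting injectivity of $v\mapsto v_1$ directly from the recursion $v_{i+1}=\mu v_i-y_iv_1$, the characterization $\dim\ker y=2\iff y_m=0$, an intrinsic (via $\im y$) identification with $\mathcal{S}_{m-1}$, and the $\chi$-compatibility from the block-companion identity $\det(\lambda I_{2m}-y)=\det(\lambda^mI_2-\lambda^{m-1}y_1-\cdots-y_m)$, which factors off $\lambda^2$ when $y_m=0$. The paper instead proves injectivity by contradiction using the $\mathbb{C}^*$-action $\lambda_r$: a nonzero vector of $E_y^\mu$ in $\{0\}^2\times\mathbb{C}^{2m-2}$ would degenerate, as $r\to 0$ and $\lambda_r(y)\to x$, to a kernel vector of the regular nilpotent $x$ violating the known structure of $\ker x$; the identification with $\mathcal{S}_{m-1}$ and the compatibility with $\chi$ are then essentially read off without further argument. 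Your approach buys explicitness and self-containment — in particular it actually substantiates the $\chi$-compatibility (eigenvalues of $y$ are those of $y'$ together with an extra pair of zeros), which the paper's proof leaves implicit — at the cost of being tied to the concrete matrix form of $\mathcal{S}_m$; the paper's degeneration argument is shorter and only uses homogeneity of the slice under $\lambda_r$, a structural tool that recurs elsewhere in the Seidel--Smith setup. One small point of care in your write-up: the commuting-blocks formula $\det\left(\begin{smallmatrix}A&B\\C&D\end{smallmatrix}\right)=\det(AD-BC)$ requires square blocks of equal size, so in the inductive step you should take the Schur complement with respect to the invertible corner $\lambda I_2$ (for $\lambda\neq 0$, then extend by continuity/Zariski density) rather than invoke commutation of $-y_m$ with $\lambda I$; the identity itself is standard and your overall argument is sound.
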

\begin{proof}
If not then  the intersection of $E_y^\mu$ with $\{0\}^2\times\mathbb{C}^{2m-2} $ is nonzero. Applying the $\mathbb{C}^*$ action we see that the same holds for \smash{$E_{\lambda_r(y)}^{r^2\mu}$}. As $r$ goes to zero, $\lambda_r(y)\rightarrow x$ so we get $\dim \ker  x>2$ which is contradiction.
From this injectivity we see that each element of $\ker y$ is determined by its first two coordinates so if $\dim  \ker y=2$ then $y_m=0$ and vice versa.
The subset of such matrices is identified with $\cal{S}_{m-1}$.
\end{proof}

For any subset $A\subset \mathfrak{sl}_{2m} $, let  $A^{sub,\lambda}$ (resp. $A^{sub3,\lambda}$) be the subset of matrices in $A$ having eigenvalue $\lambda$  of multiplicity two (resp. three) and two Jordan block of size one (resp. two Jordan blocks of sizes 1,2) corresponding to the eigenvalue $\lambda$  and no other coincidences between the eigenvalues. Here are two results describing  neighborhoods of $\mathcal{S}_m^{sub,\lambda} $ and  $\mathcal{S}_m^{sub3,\lambda} $ in $\Sm$.

\begin{lemma}\label{2nbhd}
Let $D\subset \on{\mathrm{Conf}}^0_{2m}$ be a disc consisting of the $2m$-tuples $(\mu-\epsilon, \mu-\epsilon,\mu_3,\ldots, \mu_{2m})$ with $\varepsilon$ small. Then there is a neighborhood $U_\mu$ of $\mathcal{S}_m^{sub,\mu} $ in $\Sm\cap \chi^{-1}(D)$ and  an isomorphism $\phi$ of $U_\mu$  with  a \nbhd of $\Sm^{sub,\mu}$ in $\Sm^{sub,\mu}\times \mathbb{C}^3$ such that $f \circ \phi=\chi$ on $\Sm\cap \chi^{-1}(D)$ where $f(x,a,b,c)=a^2+b^2+c^2$.
  Also if $N_y \Sm^{sub,\mu}$ denotes the normal bundle to $\Sm^{sub,\mu}$ at $y$ the we have \bq\label{2nbhdmap}\phi(N_y \Sm^{sub,\mu})= \mathfrak{sl}(E_{y}^{\mu})\oplus \zeta_y\eq where 
 $\zeta_y$ is the trace free part of $\{\mathbb{C}\subset \mathfrak{gl}(E_y^\mu) \} \oplus \mathfrak{gl}(E_y^{\mu_3})\oplus \ldots \oplus \mathfrak{gl}(E_y^{\mu_{2m}}) $.   
\end{lemma}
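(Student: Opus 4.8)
The plan is to prove Lemma \ref{2nbhd} by producing, near an arbitrary point $y_0\in\Sm^{sub,\mu}$, an explicit local biholomorphism of $\Sm$ onto a product $\Sm^{sub,\mu}\times\mathfrak{sl}_2\times\zeta_{y_0}$ that carries $\chi$ to the product of the quadratic form $-4\det$ on $\mathfrak{sl}_2\cong\mathbb{C}^3$ with a linear isomorphism on $\zeta_{y_0}$; restricting this biholomorphism over $D$ then yields $\phi$ and formula \eqref{2nbhdmap}. This mimics the local-structure arguments of \cite{SS} and \cite{Slodowy}, the only new feature being that the colliding eigenvalue is $\mu$ rather than $0$. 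The first step is linear algebra at $y_0$: since $y_0$ has eigenvalue $\mu$ of multiplicity two, is semisimple there, and has all other eigenvalues simple, it is a semisimple element of $\mathfrak{sl}_{2m}$ whose centralizer is the Levi subalgebra $\mathfrak{l}=\mathfrak{sl}(E_{y_0}^{\mu})\oplus\zeta_{y_0}$, with $\zeta_{y_0}$ exactly the space in the statement. Semisimplicity of $y_0$ gives the $\ad_{y_0}$-invariant decomposition $\mathfrak{sl}_{2m}=\mathfrak{l}\oplus[\mathfrak{sl}_{2m},y_0]$, and since $\Sm$ is a homogeneous transverse slice it is transverse to every adjoint orbit it meets \cite{Slodowy}, in particular to the orbit $\mathcal{O}_{y_0}$ of $y_0$ at $y_0$. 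Hence the projection $\pi\colon\mathfrak{sl}_{2m}\to\mathfrak{l}$ along $[\mathfrak{sl}_{2m},y_0]$ restricts to a surjection $T_{y_0}\Sm\twoheadrightarrow\mathfrak{l}$ with kernel $T_{y_0}(\Sm\cap\mathcal{O}_{y_0})=T_{y_0}\Sm^{sub,\mu}$; so $\Sm^{sub,\mu}$ is a smooth complex submanifold near $y_0$ (cf.\ Lemma \ref{2dkernel}) and $T_{y_0}\Sm=T_{y_0}\Sm^{sub,\mu}\oplus C$, where $\pi$ restricts to an isomorphism $C\cong\mathfrak{l}=\mathfrak{sl}(E_{y_0}^{\mu})\oplus\zeta_{y_0}$.

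Next I would build the coordinate map. For $y\in\Sm$ near $y_0$, Lemma \ref{2dkernel} together with continuity of the spectrum shows that the span $V_y$ of the generalized eigenvectors of $y$ for the eigenvalues near $\mu$ is two-dimensional and is carried isomorphically onto $\mathbb{C}^2$ by the projection to the first two coordinates; this produces a holomorphic rank-two subbundle $y\mapsto V_y$ with $y$-invariant complement $F_y$, so $y=y|_{V_y}\oplus y|_{F_y}$. Using $V_y\cong\mathbb{C}^2$, define holomorphic maps $A(y)\in\mathfrak{sl}_2$ (the traceless part of $y|_{V_y}$) and $\sigma(y)$ (the trace of $y|_{V_y}$ together with the eigenvalues of $y|_{F_y}$, valued in an affine space modelled on $\zeta_{y_0}$). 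Since $\chi(y)$ is the product of the characteristic polynomials of $y|_{V_y}$ and $y|_{F_y}$, it is recovered from $(A(y),\sigma(y))$, and its dependence on $A$ is purely quadratic: the discriminant of $y|_{V_y}$, which records how far the two near-$\mu$ eigenvalues are from colliding, equals $-4\det A(y)$. Finally choose a holomorphic retraction $p$ of a neighborhood $U$ of $y_0$ in $\Sm$ onto $\Sm^{sub,\mu}$ with $dp_{y_0}|_{C}=0$, and set $\Phi=(p,A,\sigma)\colon U\to\Sm^{sub,\mu}\times\mathfrak{sl}_2\times\zeta_{y_0}$.

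The core step is to verify that $d\Phi_{y_0}$ is an isomorphism, which I would do block-wise relative to $T_{y_0}\Sm=T_{y_0}\Sm^{sub,\mu}\oplus C$. On $T_{y_0}\Sm^{sub,\mu}$ we have $dA=0$ (there $y|_{V_y}$ is the scalar $\mu\cdot\mathrm{id}$) and $d\sigma=0$ (the spectrum is constant along $\Sm^{sub,\mu}$), so $d\Phi$ is just the identity there; on $C$ we have $dp=0$, while $dA$ and $d\sigma$ annihilate $[\mathfrak{sl}_{2m},y_0]$ (along which $A$ vanishes and $\sigma$ is constant) and therefore factor through $\pi$, becoming precisely the two projections of $\mathfrak{l}=\mathfrak{sl}(E_{y_0}^{\mu})\oplus\zeta_{y_0}$. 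This last point — that first-order perturbation of the block-diagonal restrictions $y|_{V_y}$, $y|_{F_y}$ reproduces the $\mathfrak{sl}$-component and the trace components of an element of $\mathfrak{l}$ — is the linear-algebra heart of the proof. Granting it, $\Phi$ is a biholomorphism near $y_0$. Restricting over $D$, whose coordinate is, up to a holomorphic change, the squared difference of the two near-$\mu$ eigenvalues, pins $\sigma$ to $\sigma(y_0)$ while leaving $A$ free; thus $\phi:=\Phi|_{U_\mu}$ with $U_\mu:=U\cap\chi^{-1}(D)$ identifies $U_\mu$ with a neighborhood of $\Sm^{sub,\mu}$ in $\Sm^{sub,\mu}\times\mathfrak{sl}_2\cong\Sm^{sub,\mu}\times\mathbb{C}^3$ so that $\chi$ becomes $-4\det A$. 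A linear change of coordinates diagonalizing $-4\det$ on $\mathfrak{sl}_2\cong\mathbb{C}^3$ turns this into $f(a,b,c)=a^2+b^2+c^2$, and \eqref{2nbhdmap} is the description of $d\Phi_{y_0}$ on $C$ obtained above.

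The step I expect to be the main obstacle is securing the transversality of $\Sm$ to the lower-dimensional orbit $\mathcal{O}_{y_0}$ (not merely to the regular orbits it meets generically), together with the first-order eigenspace bookkeeping matching $dA\oplus d\sigma$ on $C$ with the Levi decomposition. A cleaner but essentially equivalent alternative is to invoke Luna's \'etale slice theorem at the semisimple point $y_0$: this reduces $\chi$ near $\mathcal{O}_{y_0}$ to the adjoint quotient of $\mathfrak{l}\cong\mathfrak{gl}_2\oplus\mathbb{C}^{2m-2}$, whose $\mathfrak{gl}_2$-factor contributes the $A_1$ singularity $\det\colon\mathfrak{sl}_2\to\mathbb{C}$ by Brieskorn--Slodowy and whose abelian factor contributes the $\zeta_{y_0}$-directions linearly; one then only has to compare the given slice $\Sm$ with the Luna slice, to which it is related by a twist as in \eqref{AdK}.
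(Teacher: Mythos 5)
Your proposal is correct, and it reaches the lemma by a more explicit route than the paper. The paper's own proof is the abstract slice-comparison argument: it chooses a holomorphically varying family of complements $\mathcal{S}_y\subset T_y\Sm$ to $T_y\Sm^{sub,\mu}$, observes (using transversality of $\Sm$ to the stratum, which like your transversality to the orbit ultimately rests on the $\mathbb{C}^*$-homogeneity of the slice) that each $\mathcal{S}_y$ is again a transverse slice for the adjoint action, and then compares it with the centralizer slice $\mathcal{S}'_y=\mathfrak{sl}(E_y^\mu)\oplus\zeta_y$ via the general fact, coming from the local isomorphism $\operatorname{Ad}:K\times S\to\mathfrak{g}$ of \eqref{AdK}, that two transverse slices at the same point are locally isomorphic by a map moving points only inside their adjoint orbits and hence compatible with $\chi$; the canonical identifications $E_y^\mu\cong\mathbb{C}^2$ (Lemma \ref{2dkernel}) and $\mathfrak{gl}(E_y^{\mu_i})\cong\mathbb{C}$ then give $\mathcal{S}'_y\cong\mathfrak{sl}_2\oplus\mathbb{C}^{2m-2}$, and the appearance of $f(a,b,c)=a^2+b^2+c^2$ on the $\mathfrak{sl}_2$-factor is left implicit (it is your computation $\mathrm{discriminant}=-4\det$). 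You instead build one globally defined holomorphic map $\Phi=(p,A,\sigma)$ from spectral (Riesz) projections and verify by a block-triangular Jacobian computation that it is a local biholomorphism along $\Sm^{sub,\mu}$, which buys you a self-contained verification of $f\circ\phi=\chi$ and of \eqref{2nbhdmap}, at the cost of the linear-algebra bookkeeping you flag; your Luna-slice alternative is essentially the paper's argument in different clothing. Two small points to tidy: your identification $T_{y_0}(\Sm\cap\mathcal{O}_{y_0})=T_{y_0}\Sm^{sub,\mu}$ is the correct reading only after restricting to $\chi^{-1}(D)$ (i.e.\ $\Sm^{sub,\mu}$ here is the singular locus with the full spectrum fixed, as in \eqref{Ddef}; with all of $\mu_3,\ldots,\mu_{2m}$ allowed to vary it would be larger), and since $(p,A,\sigma)$ is a single map you should note that local invertibility along the closed submanifold plus the usual tubular-neighborhood argument yields one neighborhood $U_\mu$ of all of $\Sm^{sub,\mu}$ — the same level of care the paper takes when it asserts the complements $\mathcal{S}_y$ "form a tubular neighborhood" and can be chosen holomorphically.
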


\begin{proof}
For $y\in \Sm^{sub,\mu}$, let $\cal{S}_y$ be a  subspace of $T_y \Sm$ complementary  to $T_y \Sm^{sub,\lambda} $ which depends holomorphically on $y$. These subspaces together form a tubular \nbhd of $\Sm^{sub,\mu}$ in $\Sm$. Since $\Sm$ and $\frak{sl}^{sub,\mu}_{2m}$ intersect transversely, $\cal{S}_y$ is also a transverse slice at $y$ for the adjoint action on $\frak{sl}_{2m}$. We can produce another family of transverse slices $\cal{S}'_y$  by setting $\cal{S}'_y=\frak{sl}(E_{y}^{\mu})\oplus \zeta_y$ which equals the trace free part of $\frak{gl}(E_y^\mu)\oplus  \mathfrak{gl}(E_y^{\mu_3})\oplus \cdots \oplus \mathfrak{gl}(E_y^{\mu_{2m}}) $.
 The reason is that $[y,\frak{sl}_{2m}]=0\oplus \frak{sl}_{2m-2}^0$
where the first component consists of zero in $\frak{sl}_2$ and the second one consists of matrices with zeros on the diagonal and the right hand side is transverse to $\cal{S}'_y$.

Now  $\cal{S}_y$ is isomorphic (as local complex manifolds) to $\cal{S}'_y$ for each $y$ with an isomorphism that moves points only inside their adjoint orbits (and hence is compatible with $\chi$). We can choose these isomorphisms to depend holomorphically on $y$.
Each $\mathfrak{gl}(E_y^{\mu_i})\subset \frak{sl}_{2m}$ for $i>2$ can be canonically (without choice of a basis) identified with $\mathbb{C}$. Lemma \ref{2dkernel} tells us that $E_y^\mu$ can be canonically identified with $\mathbb{C}^2$ so $\mathfrak{sl}(E_y^\mu)$ is identified with $\frak{sl}_2$. It follows that $\cal{S}'_y\cong \frak{sl}_2 \oplus \mathbb{C}^{2m-2}.$ The desired $\phi$ is the composition of the two isomorphisms in this paragraph.
\end{proof}


\begin{remark}
If $y$ has two linearly independent $\mu_1$ eigenvectors as well as two linearly independent $\mu_2$ eigenvectors and with no other coincidences between the eigenvalues, we can repeat the above argument to obtain
\bq\label{22nbhd}\phi(N_y (\Sm^{sub,\mu_1}\cap \Sm^{sub,\mu_2}))= \mathfrak{sl}(E_{\mu_1})\oplus \mathfrak{sl}(E_{\mu_2})\oplus \zeta.\eq
So $\phi$  gives an isomorphism between a \nbhd of $\Sm^{sub,\mu_1}\cap \Sm^{sub,\mu_2}$  in $\Sm$ and $(\Sm^{sub,\mu_1}\cap \Sm^{sub,\mu_2})\times\mathbb{C}^3\times \mathbb{C}^3$.
\end{remark}

  Consider the line bundle $\mathcal{F}$ on $\Sm^{sub3,\mu}$ whose fiber at $y\in\Sm^{sub3,\mu}$ is $(y-m)E_{y_s}^\mu$ where $y_s$ is the semisimple part of $y$. To $\cal{F}$ one  associates a  $\Bbb{C}^4$ bundle $\mathcal{L}=(\mathcal{F}\backslash 0)\times_{\mathbb{C}^*} \mathbb{C}^4$ where  $z\in \mathbb{C}^*$ acts on $\mathbb{C}^4$  by
  \begin{equation}\label{C2action}
  (a,b,c,d) \rightarrow (a,z^{-2}b, z^{2}c, d ).
\end{equation}
$\cal{L}$ decomposes as \bq\label{Ldecomp}\cal{L}\cong\mathbb{C}\oplus\cal{F}^{-2}\oplus\cal{F}^2\oplus \mathbb{C}.\eq
Fibers of $\cal{L}$ should be regarded as transverse slices in $\frak{sl}_3$. Upon choosing  suitable coordinates on such a transverse  slice (at the zero matrix) the function $\chi$  equals the function $p:\frak{sl}_3\rightarrow \mathbb{C}^2$ given by \bq\label{p} p(a,b,c,d)=(d,a^3-ad+bc).\eq  $p$ is also well-defined as a function $\cal{L}\rightarrow \mathbb{C}^2$ because $b$ and $c$ are coordinates on line bundles which are inverses of each other.  
Denote by $\tau(d,z)$ the set of solutions of  $\lambda^3-d\lambda+z=0$.

\begin{lemma}\label{3nbhd}
 Let $P\subset \mathrm{Conf}^0_{2m}$ be the set of $2m+2$-tuples \bq\label{3nbhddisk}(\mu_1,\ldots, \mu_{i-1},\tau(d,z),\mu_{i+3},\ldots ,\mu_{2m+2}).\eq where $d$ and $z$ vary in a small disc in $\mathbb{C}$ containing the origin.
 There is a \nbhd  $V$ of $\Sm^{sub3}$ in $\Sm\cap\chi^{-1}(P)$ and an isomorphism $\phi'$ from $V$ to a \nbhd of zero section in $\mathcal{L}$ such that  $p(\phi'(x))=(d,z)$ if \\
 $$\chi(x)=(\mu_1,\ldots, \mu_{i-1},\tau(d,z),\mu_{i+3}, \ldots, \mu_{2m+2}).$$
\end{lemma}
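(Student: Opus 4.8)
The plan is to imitate the proof of Lemma~\ref{2nbhd} almost verbatim, replacing the $A_1$-type local model $f=a^2+b^2+c^2$ by the $A_2$-type model $p$ of \eqref{p}, and carrying the $\mathbb{C}^*$-action $\lambda_r$ along so that the decomposition \eqref{Ldecomp} of the rank-$4$ bundle $\cal{L}$ drops out. Here $\Sm^{sub3}$ is read as the subregular locus lying over the centre $d=z=0$ of $P$, where $\tau(0,0)$ is the triple root.

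First I would choose, for each $y\in\Sm^{sub3}$, a subspace $\cal{S}_y\subset T_y\Sm$ complementary to $T_y\Sm^{sub3}$ and depending holomorphically on $y$; these assemble into a tubular \nbhd $V$ of $\Sm^{sub3}$ in $\Sm\cap\chi^{-1}(P)$. Since $\Sm$ meets the subregular locus $\frak{sl}^{sub3,\mu}$ transversely, each $\cal{S}_y$ is again a transverse slice at $y$ for the adjoint action. As in Lemma~\ref{2nbhd}, I would then pass to a second, explicit family $\cal{S}'_y$ of transverse slices, assembled blockwise over the generalized eigenspaces of the semisimple part $y_s$: on the eigenspace $E_{y_s}^{\mu_j}$ of each simple eigenvalue $\mu_j$ take the trace-adjusted copy of $\frak{gl}(E_{y_s}^{\mu_j})\cong\mathbb{C}$ as before, while on the $3$-dimensional generalized $\mu$-eigenspace --- on which $y-y_s$ is a subregular nilpotent of $\frak{sl}(E^{\mu}_{y_s})\cong\frak{sl}_3$ --- take a homogeneous Jacobson--Morozov slice to the orbit of $y-y_s$, as in the construction recalled before Lemma~\ref{2dkernel}. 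Transversality of $\cal{S}'_y$ follows exactly as in Lemma~\ref{2nbhd}: the centralizer of $y$ splits blockwise and, by construction, meets $\cal{S}'_y$ only in $0$.

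Next I would invoke the classical local description of the adjoint quotient near a subregular nilpotent of $\frak{sl}_3$ (Slodowy): there exist holomorphic coordinates $(a,b,c,d)$ on $\cal{S}'_y$, adapted to the $\frak{sl}_2$-triple of $y-y_s$, in which $\chi$ becomes the map $p(a,b,c,d)=(d,\,a^3-ad+bc)$ of \eqref{p}; equivalently, $\chi$ restricted to a subregular slice of $\frak{sl}_3$ is a semiuniversal deformation of the $A_2$ surface singularity $bc=-a^3$. Because the slice may be taken homogeneous ($\lambda_r$-invariant, $x$ being a fixed point of $\lambda_r$), tracking the weights of $\lambda_r$ on the $\frak{sl}_2$-triple shows that $\lambda_r$ acts on $(a,b,c,d)$ with weights $(0,-2,2,0)$, which is precisely the action \eqref{C2action}. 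Allowing $y$ to vary, $a$ and $d$ stay honest functions while $b$ and $c$ become sections of mutually inverse line bundles, namely $\cal{F}^{\mp2}$ for $\cal{F}_y=(y-y_s)E^{\mu}_{y_s}$; this produces the decomposition \eqref{Ldecomp} and shows that $p$ is well defined as a map $\cal{L}\to\mathbb{C}^2$.

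Finally, as in Lemma~\ref{2nbhd}, for each $y$ the local complex manifolds $\cal{S}_y$ and $\cal{S}'_y$ are isomorphic by a biholomorphism moving points only inside their adjoint orbits --- hence commuting with $\chi$ --- and these isomorphisms can be chosen holomorphic in $y$; composing with the coordinate identification $\cal{S}'_y\cong\mathbb{C}^4$ of the previous paragraph yields the desired $\phi'\colon V\to\cal{L}$ with $p(\phi'(x))=(d,z)$ whenever $\chi(x)$ is the tuple \eqref{3nbhddisk}, under the identification of $P$ with a disc in the $(d,z)$-plane. The step I expect to be the main obstacle is precisely this explicit $\frak{sl}_3$ computation: one must produce the coordinates $(a,b,c,d)$ realizing $\chi=p$ \emph{simultaneously} with $\mathbb{C}^*$-equivariance and with holomorphic dependence on $y$, so that the fibrewise normal form upgrades to the line-bundle statement \eqref{Ldecomp}. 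Verifying the transversality of $\Sm$ with $\frak{sl}^{sub3,\mu}$, identifying $\cal{F}_y$ intrinsically, and checking that $V$ stays inside $\chi^{-1}(P)$ are comparatively minor points; everything else is a parametrized rerun of the Slodowy-slice arguments already used for Lemma~\ref{2nbhd}.
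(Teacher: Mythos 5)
The paper itself contains no proof of Lemma \ref{3nbhd}: Section \ref{reviewss} explicitly omits proofs and defers to \cite{SS}. Measured against the intended Seidel--Smith argument, your skeleton is the right one and runs exactly parallel to the paper's proof of Lemma \ref{2nbhd}: a holomorphic family of transverse slices $\cal{S}_y$ forming a tubular \nbhd of $\Sm^{sub3}$, an explicit blockwise family $\cal{S}'_y$ whose interesting part is a homogeneous Jacobson--Morozov slice at the subregular nilpotent of $\frak{sl}(E^{\mu}_{y_s})\cong\frak{sl}_3$, Slodowy's normal form identifying $\chi$ with $p$ of \eqref{p} on that slice, and orbit-preserving identifications $\cal{S}_y\cong\cal{S}'_y$ depending holomorphically on $y$, with the extra $\frak{gl}(E^{\mu_j})$-directions killed by restricting to $\chi^{-1}(P)$.

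There is, however, a genuine error at exactly the step you flag as the crux. The $\mathbb{C}^*$-action \eqref{C2action} is \emph{not} the action of $\lambda_r$. Indeed $\lambda_r$ rescales all eigenvalues by $r^2$, so it does not preserve the fibres of $\chi$ (nor the family $\chi^{-1}(P)$), and on a homogeneous slice at a nilpotent it acts with strictly \emph{positive} weights: for the subregular $e=E_{12}$ in $\frak{sl}_3$ with $h=\on{diag}(1,-1,0)$, the slice directions have $\ad_h$-weights $0,-2,-1,-1$, hence $\lambda_r$-weights $2,4,3,3$; in the coordinates of \eqref{p} this gives weights of the type $(2,3,3,4)$ on $(a,b,c,d)$, never $(0,-2,2,0)$. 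The twisting that produces \eqref{Ldecomp} comes instead from the structure group of $\cal{F}$ (equivalently, the $\mathbb{C}^*$ in the reductive centralizer of the fibrewise subregular data): the normal-form coordinates $(a,b,c,d)$ depend on a choice of generator of $\cal{F}_y=(y-y_s)E^{\mu}_{y_s}$, i.e. of an adapted basis of $E^{\mu}_{y_s}$, and rescaling that generator by $z$ changes the coordinates precisely by \eqref{C2action}. It is this ambiguity, varying holomorphically with $y$, that glues the fibrewise normal forms into the associated bundle $(\cal{F}\backslash 0)\times_{\mathbb{C}^*}\mathbb{C}^4$ and makes $p$ well defined on $\cal{L}$. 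As written, your argument does not establish this equivariance, so the passage from the fibrewise $A_2$ model to the line-bundle statement --- which is the whole content of the lemma beyond Lemma \ref{2nbhd} --- is missing; replacing ``weights of $\lambda_r$'' by ``dependence of the normal form on the trivialization of $\cal{F}_y$'' is the needed repair, and the remaining points (transversality, restriction to $\chi^{-1}(P)$, holomorphic dependence) are fine.
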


\subsection{Relative vanishing cycles }\label{relvanish}

Let $X$ be a complex manifold and $K$ a compact submanifold. Let  $g$ be a Kahler metric on $Y=X \times \mathbb{C}^3$ (not necessarily the product metric)  and denote its imaginary part by $\Omega$. Consider the map $f:X\times \mathbb{C}^3\rightarrow \mathbb{C}$ given by $f(x,a,b,c)= a^2+b^2+c^2$ and denote by $\phi_t$ the gradient flow of $-\on{Re} f$. 
Let $W$ be the set of points  $y\in Y$ for which the trajectory $\phi_t(y)$ 
 exists for all positive $t$. 

\begin{lemma}
$W$ is a manifold and $l$ is smooth.  The function $l:W\rightarrow X$ given by $l(y)=\lim_{t\rightarrow \infty} \phi_t(y)$ is well-defined and smooth.   We have $\Omega|_W=l^* \Omega|_X$. $f$ restricted to $W$ is real and nonnegative.
\end {lemma}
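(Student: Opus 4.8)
The plan is to analyze the gradient flow of $-\on{Re} f$ on $Y = X \times \C^3$ by understanding it first in the fiber direction $\C^3$, where $f$ restricts to the standard quadratic $q(a,b,c) = a^2+b^2+c^2$, and then transferring the resulting structure to $Y$ via the Kähler metric. The key observation is that $q^{-1}(0) \subset \C^3$ is the affine cone $\{a^2+b^2+c^2 = 0\}$, and the Morse–Bott picture for $-\on{Re} q$ (equivalently for the "radial" behavior of $q$) identifies the stable set of the critical locus: $q$ has a single critical value $0$, the critical set being $\{a^2+b^2+c^2=0\}$ only degenerately; in fact the relevant fact is the standard one that for $q$ on $\C^n$ the negative gradient trajectories of $\on{Re} q$ that converge are exactly those lying on the zero fiber $q^{-1}(0)$ together with their emanating directions, and the limit map records the point of $K = X \times \{0\}$ below. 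So I would first prove everything for the product metric, where the flow splits as (trivial flow on $X$) $\times$ (flow on $\C^3$), and the $\C^3$ part is completely explicit.

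The key steps, in order, are as follows. First, write down the gradient flow of $-\on{Re} q$ on $\C^3$ with the Euclidean metric: using coordinates, $\grad(\on{Re} q) = 2(\bar a, \bar b, \bar c)$, so the flow is $\dot w = -2 \bar w$ componentwise; solving this shows a trajectory $w(t)$ exists for all $t \geq 0$ iff $w(0)$ lies in the stable manifold, which one computes to be exactly $q^{-1}(0)$, and on it $w(t) \to 0$. Along the way one checks $q(w(t))$ stays real (indeed $\equiv 0$ on the stable set) — this gives the "$f$ restricted to $W$ is real and nonnegative" claim, with nonnegativity coming from the fact that approaching trajectories have $\on{Re} q$ monotone and bounded below by its limit $0$. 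Second, identify $W$ in the product case as $X \times (q^{-1}(0)\text{-stable set})$, manifestly a manifold, with $l$ the projection to $X$, manifestly smooth. Third, handle the general Kähler metric $g$: since $g$ and the product metric agree to first order along $K$ up to the assumption that $f$ is still the same holomorphic function, the gradient vector field of $-\on{Re} f$ differs from the product one by a term vanishing along $K$; I would invoke the stable-manifold theorem for the (Morse–Bott) critical submanifold $K \subset Y$, whose normal Hessian in the fiber directions is nondegenerate because $q$ is, to conclude that $W$ is a smooth manifold (an embedded submanifold, a disc bundle over a piece of $K$ union stable directions) and that $l$, being the composition of the stable-manifold retraction with a smooth identification, is smooth. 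Fourth, the identity $\Om|_W = l^*(\Om|_X)$: since $f$ is holomorphic and $g$ is Kähler, the flow $\phi_t$ is the real part of a holomorphic vector field, hence its time-$t$ maps are... not holomorphic in general, but one uses that $\phi_t^* \Om = \Om + t\,d(\text{something})$ won't quite work — instead I'd argue that $l$ is a smooth retraction and that $\Om$ restricted to $W$ is basic with respect to the fibers of $l$: the fibers of $l$ are the individual stable trajectories' "leaves" which are isotropic for $\Om$ because on $q^{-1}(0)$ the form $\on{Im} q \equiv 0$ and more precisely the tangent spaces to the flow lines lie in the kernel of $\Om|_W$ by a direct computation with $df$ of type $(1,0)$; then $\Om|_W$ descends along $l$ and equals $l^*(\Om|_X)$ because $l$ restricted to $K$ is the identity.

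The main obstacle I expect is the general (non-product) Kähler metric: making precise that $W$ is a manifold and $l$ is smooth requires either a careful application of the Morse–Bott stable manifold theorem in a noncompact setting (the fibers $\C^3$ are noncompact, so one must control trajectories uniformly, perhaps by a convexity/monotonicity estimate on $\on{Re} f$ showing escape in finite time off the stable set) or a homotopy argument deforming $g$ to the product metric while tracking the stable set. Secondarily, the clean identity $\Om|_W = l^* \Om|_X$ needs the observation — essentially Seidel–Smith's — that the stable set $W$ of a holomorphic Morse–Bott function under a Kähler gradient flow is coisotropic with null-foliation given by the gradient trajectories, so that $\Om|_W$ is pulled back from the critical manifold; I would isolate this as a lemma and verify the null-foliation claim by the computation $\Om(\grad \on{Re} f, v) = -d(\on{Im} f)(v)$ and $\on{Im} f \equiv 0$ on $W$, together with $\on{Re} f$ constant... which is not constant, so one restricts to the level $\on{Re} f = 0$ part where $K$ lives; handling the non-zero levels of $f|_W$ requires showing those are empty, i.e. that the only convergent trajectories sit on $f^{-1}(0)$, which loops back to the first step.
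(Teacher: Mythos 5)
There is a genuine error at the heart of your plan: the fiberwise model computation. For $q(w)=\sum w_j^2$ on $\C^3$ with the Euclidean metric, writing $w=x+iy$, the negative gradient flow of $\on{Re}q$ is $\dot x=-2x$, $\dot y=+2y$; the initial conditions whose trajectories converge as $t\to\infty$ form the \emph{real locus} $\R^3=\{y=0\}$, not the cone $q^{-1}(0)$. (For instance $(1,i,0)\in q^{-1}(0)$ has its $y$-part blowing up, while $(1,0,0)\notin q^{-1}(0)$ flows to the origin.) On this stable set $q=|x|^2$ is real and nonnegative but certainly not $\equiv 0$. Consequently your claims that $f\equiv 0$ on the stable set, and at the end that "the non-zero levels of $f|_W$ are empty," are false — and they must be false for the lemma to be of any use: the relative vanishing cycle $V_t(K)=\pi^{-1}(t)\cap l^{-1}(K)$ is a nonempty $2$-sphere bundle over $K$ for small real $t>0$ exactly because $f|_W$ takes all small nonnegative real values (in the model the level $t$ is $\sqrt{t}\,S^2\subset\R^3$). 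Also note that for this linear vector field every trajectory exists for all $t$, so "exists for all $t\ge 0$" and "lies in the stable manifold" are not interchangeable in the way your first step asserts. An argument routed through $W\subset f^{-1}(0)$ cannot be repaired as written.

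For comparison, the paper's proof is two lines. The statements about $W$ and $l$ follow from the stable manifold theorem applied to the critical manifold $X\times\{0\}$ of $\on{Re}f$ (your Morse--Bott invocation is the same idea, and is the part of your proposal that is sound). Everything else follows from the single fact that, for a K\"ahler metric, $-\nabla\on{Re}f$ is the Hamiltonian vector field of $\on{Im}f$: hence each $\phi_t$ preserves $\Om$ and preserves $\on{Im}f$, so letting $t\to\infty$ on $W$, where $\phi_t|_W\to l$, one gets $\Om|_W=\lim_t(\phi_t|_W)^*\Om=l^*(\Om|_X)$, while $\on{Im}f|_W=\lim_t\on{Im}f\circ\phi_t=0$ and $\on{Re}f$ decreases along trajectories to its limiting value $0$, giving $f|_W$ real and nonnegative. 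Your coisotropic/null-foliation argument for $\Om|_W=l^*(\Om|_X)$ could be salvaged — it only needs $d(\on{Im}f)|_{TW}=0$, which follows from $\on{Im}f|_W\equiv 0$, not from $W\subset f^{-1}(0)$ — but as written you tether it to the false emptiness claim; the paper's "the flow is Hamiltonian, pass to the limit" is both simpler and sidesteps the issue.
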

\begin{proof}The first two assertions follow from stable manifold theorem (Theorem 1 in \cite{Stablemanifoldthm}) and the rest from the fact that gradient vector field of $-\on{Re} f$ is the Hamiltonian vector field of $\on{Im} f$.
\end{proof}

Set $V_t(K)=\pi^{-1}(t)\cap l^{-1}(K)=l|_{\pi^{-1}(t)}^{-1}(K)$ which is a manifold for $t$ small.  
It follows from Morse-Bott lemma that $V_t(K)$ is a 2-sphere bundle on $K$ for $t$ small. To generalize the invariant to tangles we will need a slightly more general version of the above construction in which $K$ is noncompact and the metric equals the product metric outside a compact subset. (See section \ref{func}.) The resulting vanishing cycle equals (symplectically) the product bundle outside a compact subset.

\subsection{Fibered $A_2$  singularities}\label{fibered}

Assume we have the same situation as in the Lemma \ref{3nbhd}, i.e. let $\cal{F}$ be a holomorphic line bundle over a complex manifold $X$ and define $Y$ to be $(\cal{F}\backslash0) \times_{\mathbb{C}^*} \mathbb{C}^4$ where the $\mathbb{C}^*$ action is as in the formula \eqref{C2action}. Let $\Omega$ be an arbitrary Kahler form on $Y$ and by  regarding $X$ as the zero section of $Y$, $\Omega$ restricts to a Kahler form on $X$. Let $(a,b,c,d)$ be the coordinates on fibers of $Y\rightarrow X$ and $(d,z)$ coordinates on $\mathbb{C}^2$. Let the map $p:Y\rightarrow \mathbb{C}^2$ be as in Lemma \ref{3nbhd}. Let $Y_d=p^{-1}(\mathbb{C}\times \{d\})$ and $p_d: Y_d\rightarrow \mathbb{C}$ be the restriction of $p$. Set $Y_{d,z}=p^{-1}(d,z)$. For $d\neq 0$ critical values of $p_d$ are \smash{$\zeta^{\pm}_d=\pm 2\sqrt{d^3/27}$}.

 Let $K$ be Lagrangian submanifold of $X$.  Using relative vanishing cycle construction for the function $p_d$ we can obtain a Lagrangian submanifold $L_d$ of $Y$ which is a  sphere bundles over $K$. (This construction works when $Y$ is a nontrivial bundle over $X$ as well.) There is another way of describing this  Lagrangian as follows. Let $\mathrm{Y}\cong \mathbb{C}^4$  be the fiber of $Y\rightarrow X$ over some point of $X$ and let $p:\mathrm{Y}\rightarrow \mathbb{C}^2$ be as before. The restriction of the $\mathbb{C}^*$ action to $S^1$ is a Hamiltonian action with the moment map $\mu(a,b,c,d)=|c|^2-|b|^2$.
 Define
 \bq\label{C} C_{d,z,a}=\{(b,c):  \mu(b,c)=0, a^3-da-z=-bc \}\subset \mathrm{Y}_{d,z} \eq which is a point if $a^3-da-z= 0$ and a circle otherwise.  The three solutions of this equation correspond to the critical values of the projection $q_{d,z}:\mathrm{Y}_{d,z}\rightarrow \mathbb{C}$ to the $a$ plane. In the situation of Lemma \ref{3nbhd} they correspond to the three  eigenvalues of $\mathrm{Y}$. Let $\alpha(r)$ be any embedded curve in $\mathbb{C}$ which intersects these critical values (only) if $r=0,1$. Define \bq\label{Lambd}\Lambda_\alpha=\bigcup_{r=0}^{1}  C_{d,z,\alpha(r)}\eq which is a Lagrangian submanifold of $\mathrm{Y}_{d,z}$ (with Kahler form induced from $\mathbb{C}^4$). Let $\mathbf{c}, \mathbf{c}',\mathbf{c}''$ be as in the Figure \ref{curvesalphabeta} where dots represent the critical values of $q_{d,z}$.
We can associate to $K$ a Lagrangian submanifold $\Lambda_{d,\alpha}$ of $Y$ by defining $\Lambda_{d,\alpha}=(Y|K)\times_{S^1} \Lambda_{\mathbf{c}}$.  
  Seidel and Smith prove that these two procedures give the same result:

 \begin{lemma}\label{2lagrang}
 If the Kahler form on $Y$ is obtained from  a Kahler form on $X$, a Hermitian metric on $\mathcal{F}$ and the standard form on $\mathbb{C}^4$ then $L_d=\Lambda_{d,\mathbf{c}}$.

 \end{lemma}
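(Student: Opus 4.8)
The plan is to convert the asserted equality into a two-dimensional statement by means of the Hamiltonian circle action on the fibre, to describe both Lagrangians explicitly after symplectic reduction, and to match the two resulting arcs. I would first use the hypothesis on the Kähler form to pass to a single fibre of $Y\to X$: because the form on $Y$ is assembled from a Kähler form on $X$, a Hermitian metric on $\mathcal F$ and the standard form on $\mathbb C^4$, it splits along $Y\to X$, so the gradient vector field of $-\on{Re}p_d$ involves only the $\mathbb C^4$-directions, is tangent to the fibres, and the relative vanishing cycle construction of Section \ref{relvanish} proceeds fibrewise. Since $\Lambda_{d,\mathbf c}=(Y|K)\times_{S^1}\Lambda_{\mathbf c}$ with $\Lambda_{\mathbf c}\subset\mathrm{Y}_{d,z}$ by definition, and since the flow being fibrewise likewise gives $L_d=(Y|K)\times_{S^1}(L_d\cap\mathrm{Y}_{d,z})$, it is enough to prove $L_d\cap\mathrm{Y}_{d,z}=\Lambda_{\mathbf c}$ inside the $A_2$ Milnor fibre $\mathrm{Y}_{d,z}\subset\mathbb C^3$.

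Next I would exploit the circle symmetry. The action $(a,b,c,d)\mapsto(a,e^{-2i\theta}b,e^{2i\theta}c,d)$ is Hamiltonian for the standard form with moment map $\mu=|c|^2-|b|^2$ and preserves $p$, hence $p_d$. Since the gradient flow of $-\on{Re}p_d$ is the Hamiltonian flow of $\on{Im}p_d$ (Section \ref{relvanish}) and $\on{Im}p_d$ is $S^1$-invariant, this flow commutes with the action and preserves both $\on{Im}p_d$ and $\mu$; as the two critical points $(\pm\sqrt{d/3},0,0)$ of $p_d$ are $S^1$-fixed with $\mu=0$, it follows that $L_d$ is $S^1$-invariant and $L_d\subset\mu^{-1}(0)$, and the same is evident for the circles $C_{d,z,a}$ of \eqref{C} and hence for $\Lambda_{\mathbf c}$. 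I would then pass to the symplectic quotient $\big(\mu^{-1}(0)\cap\mathrm{Y}_{d,z}\big)/S^1$: the invariant functions $a$ and $bc$ identify it with the $a$-plane $\mathbb C$, the quotient map being a circle bundle over the complement of the three roots of $a^3-da-z$ (the critical values of $q_{d,z}$) and collapsing each $C_{d,z,a}$ to a point over those roots. Under this identification $\Lambda_{\mathbf c}$ is exactly the preimage of the arc $\mathbf c$, whereas $L_d\cap\mathrm{Y}_{d,z}$, being an $S^1$-invariant embedded $2$-sphere, must be the preimage of some embedded arc $\mathbf c'$ joining two of the three collapse points; thus $L_d\cap\mathrm{Y}_{d,z}=\Lambda_{\mathbf c'}$.

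Finally I would show $\mathbf c'$ is the arc $\mathbf c$ of Figure \ref{curvesalphabeta}. The Lagrangian $L_d$ is obtained by transporting the vanishing sphere of $p_d$ along a path in the $z$-plane ending at a critical value $\zeta_d^{\pm}=\pm2\sqrt{d^3/27}$; as $z$ approaches that value the sphere contracts onto the critical point $(\mp\sqrt{d/3},0,0)$, and simultaneously two of the three roots of $a^3-da-z$ coalesce at $\mp\sqrt{d/3}$, the cubic developing a double root there. Hence the endpoints of $\mathbf c'$ are precisely those two roots, which is exactly the description of $\mathbf c$, so $\mathbf c'=\mathbf c$ and, undoing the fibrewise reduction, $L_d=\Lambda_{d,\mathbf c}$. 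I expect the main obstacle to be this final identification together with the analysis at the collapse points: one has to verify, using the Morse-Bott model of $p_d$ near the singular fibre, that the parallel transport actually limits onto the $S^1$-fixed locus in the prescribed manner, and one has to keep careful track of the homotopy class of the transport path so that it matches the combinatorial choice of $\mathbf c$ in the figure; without aligning these conventions the equality holds only up to Hamiltonian isotopy through Lagrangian matching cycles rather than literally.
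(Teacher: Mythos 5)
First, note that the paper itself contains no proof of Lemma \ref{2lagrang}: it is quoted from \cite{SS} (the sentence preceding it reads ``Seidel and Smith prove that these two procedures give the same result''), so your argument can only be measured against the original. Your route is essentially the standard one used there: use the hypothesis on the K\"ahler form to make the gradient flow of $-\mathrm{Re}\,p_d$ vertical, so that the relative vanishing cycle construction proceeds fibrewise; then use the Hamiltonian $S^1$-action, conservation of $\mu=|c|^2-|b|^2$ under the flow of $\mathrm{Im}\,p_d$, and the fact that the critical points are $S^1$-fixed with $\mu=0$, to place $L_d\cap \mathrm{Y}_{d,z}$ inside $\mu^{-1}(0)$ as an invariant sphere, and finally identify it after reduction with the preimage of an arc in the $a$-plane. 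This outline is sound, though the first step is stated too casually: the coupled form on $Y=(\mathcal{F}\setminus 0)\times_{\mathbb{C}^*}\mathbb{C}^4$ is not literally a product unless $\mathcal{F}$ is trivial; what you actually need is that for this metric the horizontal distribution of the Chern connection of the Hermitian metric is orthogonal to the fibres, and that $p$ is constant along horizontal lifts (being induced by a $\mathbb{C}^*$-invariant function on the fibre), from which verticality of $\nabla\mathrm{Re}\,p_d$ and the fibrewise reduction follow.

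The genuine gap is in the last step. Your reduction argument shows only that $L_d\cap\mathrm{Y}_{d,z}=\Lambda_{\mathbf{c}'}$ for \emph{some} embedded arc $\mathbf{c}'$, and your shrinking argument identifies its endpoints as the two roots of $a^3-da-z$ that coalesce at the chosen critical value. But the lemma asserts $L_d=\Lambda_{d,\mathbf{c}}$ for the specific curve $\mathbf{c}$ of Figure \ref{curvesalphabeta}, and an arc is not determined by its endpoints: arcs in different isotopy classes rel the three marked points yield genuinely different Lagrangian spheres (they differ by fibred Dehn twists, and this distinction is exactly what is exploited later, e.g.\ in Lemma \ref{rotate_lag}). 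As written you have therefore proved the statement only up to this ambiguity, which you acknowledge yourself. To close it, push your own continuity argument further: for $z$ close to the critical value the whole sphere, hence its image arc, lies in a small disc about the double root containing no other root, where an embedded arc joining the two roots is unique up to isotopy; transporting back along the chosen path in the $z$-plane then pins down the isotopy class of $\mathbf{c}'$ and matches it with $\mathbf{c}$ -- and for the literal equality claimed one must in addition compute the image arc in the standard model (or take $\mathbf{c}$ to be defined as that image), which is how the identification is carried out in \cite{SS}.
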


\begin{figure}[ht]
\begin{center}
\scalebox{.7}{
\input{curvesalphabeta.pstex_t}
}
\caption{}
 \label{curvesalphabeta}
\end{center}
\end{figure}

\subsection{Symplectic structure}\label{symplstr}
The symplectic structure that Seidel and Smith use on  $\Sm$ is not the standard structure on $\Sm\cong \mathbb{C}^{4m-1}$. The reason is to obtain well-defined parallel transport maps for the fibration $\chi$ whose fibers are noncompact. We need a plurisubharmonic function whose fiberwise critical point set project properly under $\chi$.
Fix $\alpha>m$.
Let $\xi_i(z)=|z|^{2\alpha/i}$ for $z\in \mathbb{C}$. These functions are $C^1$ however by adding  compactly supported functions $\eta_i$ we can obtain $C^\infty$ functions $\psi_i=\xi_i+\eta_i$ on $\mathbb{C}$. We choose $\eta_i$ such that $-dd^c\psi_i>0$.   Let $\psi$ be the function on $\Sm$ whose value at  $y\in \mathcal{S}_m$ is  $$\sum_i \sum_{\mu,\nu\in\{0,1\}} \psi_i((y_{1i})_{\mu,\nu}).$$
We can choose $\eta_i$ so that $\psi$ is an exhausting plurisubharmonic function on $\mathcal{S}_m$ which gives us the symplectic form $\Omega=-dd^c\psi$. Outside a set, which is the product of the complement of a compact set in each coordinate plane, we have $$\Omega=4\sum_i (\alpha/i)(\alpha/i-1) \sum_{\mu,\nu\in\{0,1\}} |(y_{1i})_{\mu,\nu}|^{2\alpha/i} d(y_{1i})_{\mu,\nu}\wedge d\overline{ (y_{1i})_{\mu,\nu}}.$$

 By restriction we obtain Stein structures on each  $\mathcal{Y}_{m,t}$. 
 The addition of the functions $\eta_i$ prevents $\phi$ from being homogeneous with respect to the $\lambda_r$ action but as $r\rightarrow\infty$  the functions $\eta_i(r^i z )$ are supported on  smaller and smaller \nbhd of origin so $\eta_i(r^i z)/r^{2\alpha}$ go to zero and so we get the asymptotic homogeneity of $\psi$, i.e.: \bq\label{asshomog}\lim_{r\rightarrow \infty} \frac{\psi\circ \lambda_r}{r^{2\alpha}}=\xi.\eq

Since the fibers $\cal{Y}_{m,t}$ are noncompact, existence of parallel transport maps for the fiber bundle $\chi|_{\mathrm{Conf}^0_{2m}}$ is not guaranteed.
 Let $\beta$ be a curve in $\mathrm{Conf}_{2m}$ and $H_\beta $ be the horizontal lift of $\dot{\beta}$ and $Z_{\beta(s)}$ be the projection of $\nabla\psi(\beta(s))$ to $\mathcal{Y}_{m,\gamma(s)}$. Seidel and Smith obtain a \textit{rescaled parallel transport} map $h^{res}_\beta : \mathcal{Y}_{m,\beta(0)}\rightarrow \mathcal{Y}_{m,\beta(1)} $ which is given by integrating the vector field \bq\label{tranportvec}H_\beta -\sigma Z_\beta \eq and then composing by the time $\sigma$ map of $Z_{\beta(1)}$  where  $\sigma$ is a positive constant (depending on $\beta$).
$h^{res}_{\beta}$  is a symplectomorphism defined on arbitrarily large compact subsets of $\cal{Y}_{m,\beta(0)}$. For this procedure to work, one needs the fiberwise critical point set of $\psi$ to project properly under $\chi$ and the homogeneity property \eqref{asshomog} ensures this. See \cite{SS} Section 5A. \\

If $\mu\in \mathbb{C}^{2m}/\mathbf{S}_{2m}$ has only one element of multiplicity two or higher, which we denote by $\mu_1$, denote by $\cal{D}_{m,\mu}$ the set  of singular elements  of $(\chi^{-1}(\mu)\cap \mathcal{S}_m)$ i.e.
\bq\label{Ddef}\cal{D}_{m,\mu}=(\chi^{-1}(\mu)\cap \mathcal{S}_m)^{sub,\mu_1}.\eq
 Let $\cal{D}_m$ be the union of all these $\cal{D}_{m,t}$ regarded as a subset of $\mathcal{S}_m$. It inherits a Kahler metric from $\mathcal{S}_m$.  We have the map $\chi: \cal{D}_{m}\rightarrow \mathbb{C}\times \mathbb{C}^{2m-2}/\mathbf{S}_{2m-2} $. By forgetting the first eingenvalue, the image of $\chi$ can be identified with $\mathrm{Conf}_{2m-2}$.
Lemma \ref{2dkernel} tells us that if $\mu_1=0$ then $\cal{D}_{m,\mu}$ can be identified with $\mathcal{Y}_{m-1,\mu \backslash \{\mu_1 \}}$.
It can be shown (\cite{SS} Section 5A) that $\chi $ is a differentiable fiber bundle and we have rescaled parallel transport maps
\bq\label{paralleltransD} h^{res}_\beta: \cal{D}_{m,\beta(0)} \rightarrow \cal{D}_{m,\beta(1)} \eq
for any curve $\beta$ in $\mathrm{Conf}_{2m-2}$. These parallel transport maps are compatible with those for $\cal{Y}_{m-1,\mu \backslash \{0\}}$ under the identification above provided that $\beta$ lies in $\mathrm{Conf}_{2m-2}^0$. This is because of the special (product) form of the symplectic structure.\\

\subsection{Lagrangian submanifolds from crossingless matchings}\label{crossless}

Let $\mu\in \mathrm{Conf}_{2m}$. A \textit{crossingless matching} $D$ with endpoints in $\mu$ is a set of $m$ disjoint embedded curves $\delta_1,\ldots,\delta_m$ in $\mathbb{C}$ which have (only) elements of $\mu$ as endpoints.  See Figure \ref{crossinglessmatchings}. To $D$ we associate a Lagrangian submanifold $L_D$ of $\cal{Y}_{m,\mu}$ as follows. Let 
$\{\mu_{2k-1},\mu_{2k}\}\subset \mu$  be the endpoints of $\delta_k$ for each $k$. Let $\gamma$ be a curve in $\mathrm{Conf}^0_{m}$ such that  $\gamma(t)=\{\gamma_1(t), \gamma_2(t), \mu_3, \mu_4,\ldots,\mu_{2m}\}$, $\gamma_i(0)=\mu_i, i=1,2$ and as $s\rightarrow 1$, $\gamma_1(t), \gamma_2(t)$ approach each other on $\delta_1$  and collide. For example if $\delta_1(t)$ is a parametrization of $\delta_1$ s.t $\delta_1(0)=\mu_1,\delta_1(1)=\mu_2$ the we can take $\gamma(t)=\{\delta(t/2), \delta(1-t/2), \mu_3,\ldots,\mu_{2m}\}$. Set $\bar{\mu}=\mu \backslash \{\mu_{1},\mu_{2}  \}$, $\mu'=\gamma(1)$.

If $m=1$ then relative vanishing cycle construction for $\chi: \cal{S}_1\rightarrow \bb{C}$ with the critical point over $\gamma(1)=0$ gives us  a Lagrangian sphere $L$ in $\cal{Y}_{1,\gamma(1-s)}$ for small $s$. Using reverse parallel transport along $\gamma$ we can move $L$ to $\mathcal{Y}_{1,\mu}$ to get our desired Lagrangian submanifold.  Now for arbitrary $m$ assume by induction that  we have obtained a Lagrangian $L_{\bar{D}}\subset \cal{Y}_{m-1,\bar{\mu} } $ for  $\bar{D}$ which is obtained from $D$ by deleting $\delta_1$.
Now $\cal{Y}_{m-1,\bar{\mu}}$ can be identified with  $D_{m,\tau}$ where $\tau=(0,0, \mu_3-(\mu_1+\mu_2)/(2m-2),\ldots,  \mu_{2m}-(\mu_1+\mu_2)/(2m-2))$. Use parallel transport  to move $L_{\bar{D}}$ to $D_{m,\gamma(1)}$ . The later one is the set of singular points of $\cal{Y}_{m,\gamma(1)}$  so Lemma \ref{2nbhd} tells us that we can use relative vanishing cycle construction for $L_{\bar{D}}$ to obtain a Lagrangian in $\cal{Y}_{m,\gamma(1-s)}$ for small $s$. Parallel transporting  it along $\gamma$ back to $\cal{Y}_{m,\mu}$ we obtain our desired Lagrangian which is topologically a trivial sphere bundle on $L_{\bar{D}}$. We see that $L_D$ is  diffeomorphic to a product of spheres. Different choices of the curve $\gamma$ result in Hamiltonian isotopic Lagrangians. The same holds if we isotope the curves in $D$ inside $\mathbb{C}\backslash \mu$.\\

 \begin{figure}[ht]
\includegraphics[width=4.5in]{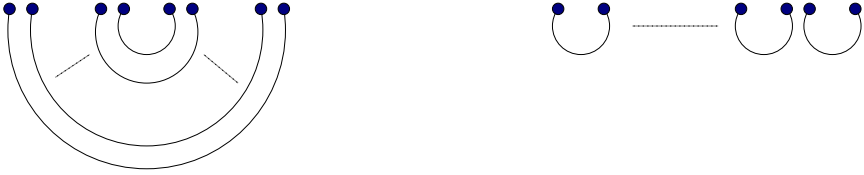}
\caption{Two crossingless matchings}
\label{crossinglessmatchings}
\end{figure}

\subsection{Grading}\label{grading}
The topic of this subsection is the absolute grading of Floer cohomology groups.
 We discuss the special case which is of concern in this paper, i.e. the case of $\mathbb{Z}$ grading. The reader is referred to \cite{gradedlag} for more detail. Let $(M^{2n},\omega)$ be a symplectic manifold. We know that for a Lie group $G$ and its maximal compact subgroup $K$, the isomorphism classes of $G$ bundles on a topological space $X$ are in one-to-one correspondence with those of $K$ bundles over $X$. The symplectic group $Sp(2n)$ has $U(n)$ as maximal compact subgroup therefore  upon choosing a compatible almost complex structure the structure group of $TM$ can be reduced to $U(n)$.
 Denote by $P$ the principal  $U(n)$ bundle we get from  $TM$ in this way.
\begin{definition}
 An \textit{infinite Maslov cover} of $M$ is a $SU(n)$ bundle $\tilde{P}$ over $M$ 
 such that $\tilde{P}\times_{SU(n)} \mathbb{C}^n=TM$.
 \end{definition}
 Since the universal cover \smash{$\widetilde{U(n)}$} of $U(n)$ has $SU(n)$ as maximal compact subgroup, we get an equivalent definition by replacing $SU(n)$ with \smash{$\widetilde{U(n)}$} in the above definition.

\begin{lemma} 
If the structure group of $TM$  can be further  reduced to $SU(n)$ then $M$ has an infinite Maslov covering.\end{lemma}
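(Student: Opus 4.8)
The statement to prove is: if the structure group of $TM$ can be further reduced to $SU(n)$, then $M$ admits an infinite Maslov covering, i.e. an $SU(n)$-bundle $\tilde P$ with $\tilde P \times_{SU(n)} \mathbb{C}^n = TM$. The plan is essentially to unwind the definitions. A reduction of the structure group of $TM$ from $U(n)$ to $SU(n)$ is, by definition, a principal $SU(n)$-subbundle $\tilde P \subset P$ such that the inclusion is compatible with the group inclusion $SU(n) \hookrightarrow U(n)$. I claim this $\tilde P$ is the desired infinite Maslov cover.

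The key step is to verify the identity $\tilde P \times_{SU(n)} \mathbb{C}^n = TM$. First, recall that since we chose a compatible almost complex structure, we have $TM = P \times_{U(n)} \mathbb{C}^n$, where $U(n)$ acts on $\mathbb{C}^n$ in the standard way. Next, the reduction gives an isomorphism of $U(n)$-bundles $P \cong \tilde P \times_{SU(n)} U(n)$ (the bundle induced up from $\tilde P$ along $SU(n) \hookrightarrow U(n)$). Substituting, we get
$$TM = P \times_{U(n)} \mathbb{C}^n = \big(\tilde P \times_{SU(n)} U(n)\big) \times_{U(n)} \mathbb{C}^n \cong \tilde P \times_{SU(n)} \mathbb{C}^n,$$
where the last isomorphism is the standard "induction is transitive" / associativity identity for associated bundles, using that the restriction of the standard $U(n)$-action on $\mathbb{C}^n$ to $SU(n)$ is again the standard $SU(n)$-action. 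This shows $\tilde P$ satisfies the defining property of an infinite Maslov cover.

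I should also mention the alternative formulation: one can equivalently take the $\widetilde{U(n)}$-bundle $\tilde P \times_{SU(n)} \widetilde{U(n)}$, since, as noted in the excerpt, $SU(n)$ is the maximal compact subgroup of $\widetilde{U(n)}$ and the two definitions agree. The only genuinely substantive point — which I expect to be the main (though still minor) obstacle — is making precise that a "reduction of structure group" really does produce a principal subbundle with the required compatibility, and that the associated-bundle manipulations above are legitimate; these are standard facts about principal bundles (see e.g. any reference on fiber bundles), so the proof is essentially a formal verification rather than a construction requiring new ideas.
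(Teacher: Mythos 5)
Your proof is correct: a reduction of structure group to $SU(n)$ is precisely a principal $SU(n)$-subbundle $\tilde P\subset P$, and the chain $TM=P\times_{U(n)}\mathbb{C}^n\cong\bigl(\tilde P\times_{SU(n)}U(n)\bigr)\times_{U(n)}\mathbb{C}^n\cong\tilde P\times_{SU(n)}\mathbb{C}^n$ verifies the stated definition of an infinite Maslov cover directly. The paper takes a slightly different (though equally routine) route: it works with transition functions rather than invariantly, taking $g_{\alpha\beta}$ and $g'_{\alpha\beta}$ to be the transition functions of $TM$ as a $U(n)$- resp.\ $SU(n)$-bundle over a common cover, and using the pairs $(g_{\alpha\beta},t)$ with $g_{\alpha\beta}=e^{it}g'_{\alpha\beta}$ as transition functions for a $\widetilde{U(n)}$-bundle $\tilde P$ with $\tilde P\times_{\widetilde{U(n)}}\mathbb{C}^n=TM$; that is, it verifies the equivalent $\widetilde{U(n)}$-form of the definition rather than the $SU(n)$-form. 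What the paper's version buys is that the lift of the phase (the choice of $t$), which is the datum actually underlying the fiberwise infinite cover used later for gradings, is made explicit at the level of cocycles; what your version buys is a cleaner, coordinate-free argument in which the only input is the standard associativity of associated-bundle constructions, with the passage to $\widetilde{U(n)}$ deferred to the stated equivalence of the two definitions (via $\tilde P\times_{SU(n)}\widetilde{U(n)}$). Both arguments are complete for the lemma as stated.
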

\begin{proof}
  Let $g_{\alpha,\beta}$ and $g'_{\alpha,\beta}$ be transition functions of $TM$ as a $U(n)$ resp. $SU(n)$ bundle over the same covering of $M$. Then the transition functions $\{(g_{\alpha,\beta},t)|\quad g_{\alpha,\beta}=e^{it} g'_{\alpha,\beta} \}$ define a \smash{$\widetilde{U(n)}$} bundle $\tilde{P}$ over $M$ for which we have $\tilde{P}\times_{\widetilde{U(n)}} \mathbb{C}^n=TM$.

 \end{proof}
The isomorphism classes of such covers are in one to one correspondence with $H_1(M)$.
 Let $\cal{L}\rightarrow M$ be the bundle whose fiber at $x\in M$ is the Lagrangian Grassmanian of $T_{x}M$. A Maslov cover $\tilde{P}$, induces a covering $\tilde{\cal{L}}\rightarrow\cal{L}$ given by  \smash{$\tilde{P}\times_{SU(n)} \widetilde{Lag}(n)$}.
  Each Lagrangian submanifold $L$ of $M$ determines a section $s_L$ of $\cal{L}$. A \textit{grading} of $L$ is a cover $\tilde{s}_L$ of $s_L$.
If $L_0, L_1$ are two Lagrangian submanifolds of $M$ that intersect transversely we can assign an absolute grading to each intersection point $x$ as follows. Let $\tilde{\Lambda_i}=\tilde{s}_{L_{i}}$ for $i=0,1$. Let $\tilde{\Lambda}(t)$ be a path joining $\tilde{\Lambda_0}$ to $\tilde{\Lambda_1}$ and let $\Lambda(t)$ be its projection. Define \bq\label{maslovgrade}\on{deg}(x)=\mu^{path}(\Lambda, T_x L_0)+\frac{n}{2} \eq  where $\mu^{path}$ is the Maslov index for paths.
It does not depend on the choice of the liftings  $\tilde{s}_{L_{i}}$ because of the naturality of $\mu^{path}$. 
In general if the canonical line bundle is not trivial, one can obtain only a $\mathbb{Z}/N$ grading for some $N\in \mathbb{N}$.

There is an equivalent way of describing this grading.  If the canonical bundle of $M$ is trivial, it has a global section (or trivialization) $\eta$.
  The global section $\eta$ gives us a map $\alpha_M :M\rightarrow S^1$ given by \bq\label{phaseofM}\alpha_M(x)=\frac{\eta(u_1,\ldots,u_n)^2}{|\eta(u_1,\ldots,u_n)|^2}\eq

  for any orthonormal basis $u_1,\ldots,u_n$ of $T_xM$.
  For each Lagrangian submanifold $L$ we can define a \textit{phase function} $\alpha_L:L\rightarrow S^1$  by \bq\label{phase}\alpha_L(x)=\frac{\eta(v_1,\ldots,v_n)^2}{|\eta(v_1,\ldots,v_n)|^2}\eq  for any orthonormal basis $v_1,\ldots,v_n$ of $T_xL$.
   \begin{altdef}
   A \textit{grading} of $L$ is a choice of a real valued function $\tilde{L}$ such that $\alpha_L=\exp(2\pi i\tilde{L})$.
   \end{altdef}
   For a pair of transversely intersecting graded Lagrangians $L_0, L_1$ and $x\in L_0\cap L_1$ one can set \bq\label{degdef2}\on{deg}(x)=\frac{n}{2}+\tilde{L_0}(x)-\tilde{L_1}(x).\eq
    We denote by $L\{m\}$, $L$ with its grading shifted by $m$, ie. $\tilde{L}\{m\}=\tilde{L}-m$.
  A grading for a diffeomorphism $\phi$ from $M$ to itself is a choice of a function $\tilde{\phi}:M\rightarrow \mathbb{R}$ such that $\exp(2\pi i\tilde{\phi}(x))=\alpha_M(x)/\alpha_M(\phi^{-1}(x))$. $\phi(L)$ has a preferred grading given by \bq\label{phiL}\widetilde{\phi(L)}(x)=\tilde{L}(\phi^{-1})(x)+\tilde{\phi}(\phi^{-1}(x)).\eq
A choice of grading for $\phi$ induces a grading on the graph $\Gamma$ of $\phi$: \bq\label{gradinggraph} \tilde{\Gamma}(x,\phi(x))=\tilde{\phi}(x). \eq

  Since each manifold $\cal{Y}_{m,\nu}$ is a   submanifold of the affine space $\Sm$ and has trivial  normal bundle, its Chern classes are zero. This together with the fact that $H_1(\cal{Y}_m)=0$ implies that   the canonical bundle of  $\cal{Y}_{m,\nu}$ is trivial and so has a unique infinite Maslov cover. We start by choosing global sections $\eta_{\Sm}$ and $\eta_{\frak{h}/W}$. Then we choose trivializations for regular fibers of $\chi_{\Sm}$  characterized by $\eta_{\cal{Y}_{m,t}}\wedge \chi^*\eta_{\frak{h}/W}=\eta_{\Sm}.$ If we choose a grading for $L\subset \cal{Y}_{m,t_0}$ and $\beta$ is a curve in $\mathrm{Conf}_{2m}$ starting at $t_0$, one can continue the grading on $L$ uniquely to $h_{\beta|_{[0,s]}}(L)$ for any $s$. Therefore the grading of $L$ uniquely determines that of $h_\beta(L)$. 

\subsection{The invariant}

Now we can define the Seidel-Smith invariant. The definition uses Floer cohomology which we review (in a bit more general setting) in section \ref{floercoho}.
Let  $\cal{D_+}$ be the crossingless matching at the left hand side of picture \ref{crossinglessmatchings}. If a link $K$ is obtained as closure of a braid $\beta\in Br_m$, let $\beta'\in \mathrm{Conf}_{2m}$ be the braid obtained from $\beta$ by adjoining the identity braid $\on{id}_m$.
\begin{definition}\label{defss}
$$\mathpzc{Kh}^{*}_{sympl}(K)=HF^{*+m+w}(L_{\cal{D_+}}, h^{res}_{\beta'}  (L_{\cal{D_+}}))$$
\end{definition}
Here  $w$ is the writhe of the braid presentation, i.e. the number of positive crossings minus the number of the negative crossings in the presentation.
Since the manifold is convex at infinity and the Lagrangians are exact, the above Floer cohomology is well-defined.
Independence from choice of $\beta$ is proved in \cite{SS}, section 5C. Well-definedness of the invariant developed in the present paper gives an alternative proof. See Theorem \ref{wellfunc}.

\section{ Stein symplectic valued field theories}\label{TSFT}
In this section we generalize symplectic valued field theories to allow a class of Lagrangian submanifolds of Stein manifolds. 

\subsection{Some remarks on Lagrangian submanifolds of Stein manifolds}\label{laginstein}
Let $(M,\psi)$ be a Stein manifold. This means that $\psi$ is proper, bounded below and  $-d d^c \psi$ is a symplectic form on $M$. For a subset $N\subset M$, we denote by $N_{\leq c}$ ($N_c$) the intersection of $N$ with sublevel (level) sets of $\psi$. Also set $\theta=-d^c \psi.$
\begin{definition}
 A Lagrangian submanifold $L\subset M$ is called $c$-\textit{allowable}  if it is exact and $\psi|_L$ does not have any critical points in $M_{\geq c }$. It is called \textit{allowable} if it is $c$-allowable for some $c$.
 In addition any compact monotone Lagrangian submanifold of  
  any symplectic manifold is considered allowable.
\end{definition}
 Note that this implies that $L$ intersects the level sets of $\psi$ transversely at infinity. Compact Lagrangian submanifolds  of  Stein manifolds are evidently allowable.
Let $Z=\nabla \psi$ be the Liouville vector field on $M$. 
Denote by $\kappa:\mathbb{R}\times M\rightarrow M$ 
the flow of $Z$. 
\begin{definition}
A Lagrangian  $L$ in a Stein manifold $M$  is said to  have a \textit{conical end} if there is a constant $c$ such that $L$ intersects $M_c$ transversely and $\kappa([0,\infty)\times (L\cap M_c))$ equals $L_{\geq c}$.
\end{definition}
An exact Lagrangian with conical end is clearly allowable. Our next task is to assign a Lagrangian with conical end to an allowable Lagrangian which can replace the former when  Floer cohomology is concerned. We first need some definitions. 

\begin{definition}
 A Lagrangian submanifold $\Lambda$ of  the symplectic manifold $M_{\leq c}$ is called $\kappa$-\textit{compatible} if it intersects $M_c$ transversely (possibly empty) and there is an $\epsilon>0$ such that $\kappa \left([-\epsilon,0]\times \Lambda_c \right)=( \, _{c-\epsilon \leq} \Lambda_{\leq c}\,)$.
\end{definition}
\begin{definition}
A  Hamiltonian isotopy induced by a time-dependent function $H_t$ is called \textit{conical} if there is a constant $c$ such that $ H_t\circ \kappa(r,x)= r H_t(\kappa(0,x)) $ for all $r\geq 0$, $t\in [0,1]$ and $x$ in $\,M_{\geq c}$.
\end{definition}
Let $\phi_t$ be a one parameter family of symplectomorphisms of $M$ and $L\subset M$ a Lagrangian submanifold. The isotopy $\phi_t(L)$ is called \textit{exact} if $\phi_t^* \theta|_{L}=\theta|_{L}+d K_t$  for any $t\in [0,1]$ where $K_t$ is a function on $L$ depending smoothly on $t$.
We have the following facts from  \cite{KhS} Section 5.
\begin{lemma}
i) Any exact Lagrangian in $M_{\leq c}$ which intersects the boundary transversely can be exact-isotoped rel boundary in $M_{\leq c}$ to a $\kappa$-compatible one.

ii)If $\Lambda_t$ is a Lagrangian isotopy in $M_{\leq c}$  such that all $\Lambda_t$ intersect the boundary transversely and $\Lambda_0, \Lambda_1$ are $\kappa$-compatible then there is another isotopy $\Lambda'_t$ with the same endpoints such that $\Lambda_t\cap M_{c}=\Lambda'_t\cap M_c$ for any $t\in [0,1]$ and all $\Lambda_t$ are $\kappa$-compatible. If $\Lambda_t$ is exact, $\Lambda'_t$ can be chosen to be so.
\end{lemma}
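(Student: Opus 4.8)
The plan is to argue entirely in a collar of the contact-type hypersurface $M_c=\partial M_{\leq c}$, using the Liouville flow $\kappa$ to put everything into a normal form and then straightening the Lagrangian there by a Moser-type argument. First I would fix the collar: since $c$ is a regular value and, after possibly shrinking $\eps_0$, $\psi$ has no critical points in $\{c-\eps_0\leq\psi\leq c\}$, the flow $\kappa$ of $Z=\nabla\psi$ gives an embedding $\Psi\colon(-\eps_0,0]\times M_c\hookrightarrow M_{\leq c}$, $\Psi(\sigma,y)=\kappa(\sigma,y)$, onto a neighbourhood $\mathcal{N}$ of $M_c$. Since $\iota_Z\omega=\theta$ and $\theta(Z)=\omega(Z,Z)=0$ one has $\mathcal{L}_Z\omega=\omega$ and $\mathcal{L}_Z\theta=\theta$, so $\Psi^*\omega=d(e^\sigma\alpha)$ and $\Psi^*\theta=e^\sigma\alpha$ with $\alpha=\theta|_{M_c}$ the induced contact form, while $\psi$ is strictly increasing in $\sigma$ and $M_c=\{0\}\times M_c$. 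In this model a Lagrangian is $\kappa$-compatible exactly when near $\{0\}\times M_c$ it is a product $[-\eps,0]\times\Lambda_c$, and such a product is Lagrangian precisely when $\Lambda_c$ is Legendrian for $\alpha$.

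For (i): using transversality of $L$ to $M_c$, write $L\cap\mathcal{N}$ as the $\Psi$-image of an embedding $(\sigma,q)\mapsto(\sigma,\gamma_\sigma(q))$ of $(-\eps_1,0]\times\partial L$, with $(\gamma_\sigma)$ a smooth family of embeddings $\partial L\to M_c$ and $\gamma_0$ the inclusion of $\partial L=L\cap M_c$. Unwinding the Lagrangian condition for $L$ at $\sigma=0$ shows $\gamma_0(\partial L)$ is isotropic for $d\alpha$; a preliminary small exact isotopy supported near $M_c$, killing the $\sigma$-derivative of $\gamma_\sigma$ at $\sigma=0$ (possible because $\theta|_L$ is exact, hence $\theta|_{\partial L}$ is exact and the isotropic $\partial L$ can be Legendrianised --- and vacuous if ``exact'' here already means the primitive is locally constant on the boundary), arranges $\gamma_0(\partial L)$ Legendrian. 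The main step is then to deform $(\gamma_\sigma)$ through admissible families to the constant family $\gamma_\sigma\equiv\gamma_0$ by a Moser-type interpolation; this is an exact Lagrangian isotopy supported in $\mathcal{N}\cap\{\psi\geq c-\eps_1\}$ which, after a cut-off in $\sigma$, glues to the identity on the rest of $M_{\leq c}$ and leaves a $\kappa$-compatible Lagrangian. Exactness of the isotopy is automatic, since the whole deformation lives in $\{\psi\geq c-\eps_1\}$ where $\theta=e^\sigma\alpha$ is explicitly controlled and one stays among Lagrangians with exact primitives.

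For (ii) I would run the same construction with the extra parameter $t$. Write $\Lambda_t\cap\mathcal{N}$ in the collar as $(\sigma,q)\mapsto(\sigma,\gamma^t_\sigma(q))$ over $\Lambda_t\cap M_c$; the hypothesis that $\Lambda_0,\Lambda_1$ are $\kappa$-compatible says $\gamma^0_\sigma$ and $\gamma^1_\sigma$ are already constant in $\sigma$. Carrying out the straightening of (i) on $(\gamma^t_\sigma)$ for each $t$, done canonically enough to depend continuously on $t$ and to be the identity whenever $(\gamma^t_\sigma)$ is already constant, yields $\Lambda'_t$ with $\Lambda'_0=\Lambda_0$, $\Lambda'_1=\Lambda_1$, with $\Lambda'_t\cap M_c=\Lambda_t\cap M_c$ for all $t$ (the straightening fixes the slice $\sigma=0$), and with every $\Lambda'_t$ now $\kappa$-compatible. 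If $\Lambda_t$ was exact then every modification is supported in $\{\psi\geq c-\eps_1\}$ and is itself through exact isotopies, so $\Lambda'_t$ stays exact.

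The hard part in both statements is the straightening step: deforming the collar family $(\gamma_\sigma)$ to the constant family through Lagrangians while simultaneously preserving exactness, controlling the behaviour at $\sigma=0$, and --- in (ii) --- depending continuously on $t$ with the already-compatible ends $t=0,1$ left untouched. This is a relative, parametrised Moser argument near the contact boundary, and everything else (the collar normal form, the Legendrianisation of the boundary) is soft by comparison. The details are carried out in \cite{KhS}, \S5.
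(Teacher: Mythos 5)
Your collar normal form $\bigl((-\epsilon_0,0]\times M_c,\ d(e^\sigma\alpha)\bigr)$ is set up correctly, but the argument has a genuine gap at exactly the step you yourself flag as the hard one: the ``Moser-type interpolation'' deforming the collar family $(\gamma_\sigma)$ to the constant family through exact Lagrangian isotopies, rel the slice $\sigma=0$, supported in the collar, and (for part (ii)) continuously in $t$ with the ends $t=0,1$ untouched. Nothing in the proposal constructs this: a naive interpolation between $(\gamma_\sigma)$ and the constant family is a family of submanifolds that are in general not Lagrangian, and producing the required exact Lagrangian isotopy is precisely the content of the lemma rewritten in collar coordinates. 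The citation of \cite{KhS}, Section 5 does not fill the hole, because the proof there (which is the one reproduced in the paper) runs along a different line: choose a collar piece $K$ of $L$ near $\partial L$, take a primitive $f$ with $\theta|_K=df$ and $f|_{\partial L}=0$, extend it to $h$ on $M_{\leq c}$ vanishing on the boundary, and deform the Liouville form to $\theta_t=\theta-t\,dh$. With respect to $\theta_1$ the Lagrangian is already conical near $M_c$ (since $\theta_1|_K=0$); the Liouville fields $Z_t$ of the $\theta_t$ give collar embeddings $\kappa_t$, the vector field $X_t=\partial_t\kappa_t\circ\kappa_t^{-1}$ is Hamiltonian with Hamiltonian function vanishing on $M_c$, and its flow $\phi_t$ is an ambient isotopy rel $M_c$ with $\phi_1^*\theta=\theta_1$ near $M_c$; hence $\phi_1(L)$ is $\kappa$-compatible and $\phi_t(L)$ is an exact isotopy. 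Part (ii) is the parametrized version of this. So the missing idea is the deformation of the Liouville structure (and the comparison of the two Liouville collars by a Hamiltonian flow); a direct straightening of $(\gamma_\sigma)$ would need its own construction, which you have not supplied.

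A secondary problem: your preliminary ``Legendrianisation'' of $\gamma_0(\partial L)$ is not an isotopy rel boundary, so as written it contradicts the statement being proved. The lemma is only correct under the convention (implicit here, following \cite{KhS}) that the primitive of $\theta|_L$ can be chosen to vanish on $\partial L$, i.e.\ $\partial L$ is already Legendrian; the paper's proof uses this when it picks $f$ with $f|_{\partial L}=0$, and you should simply assume it rather than introduce a boundary-moving step. Note also that killing $\partial_\sigma\gamma_\sigma|_{\sigma=0}$ is strictly stronger than Legendrianity of the boundary: at $\sigma=0$ the Lagrangian condition reads $\alpha(v)=-d\alpha(\dot\gamma,v)$ for $v\in T(\partial L)$, so a Legendrian boundary only forces $\dot\gamma$ into the $d\alpha$-orthogonal complement of $T(\partial L)$ in $TM_c$, not to vanish.
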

We include the proof for completeness.
\begin{proof}{\cite{KhS}, Lemma 5.2}
i)Let $L$ be such a Lagrangian. Choose $r<c$ such that 
$K=L_{\geq -r}$ deformation retracts onto $\partial L$. $\theta|_K$ is closed and vanishes on the boundary
so $\theta|_K=df$ for some $f$ on $K$ which vanishes on boundary. Extend $f$ to a smooth function $h$ on $M_{\leq c}$ vanishing on the boundary and set $\theta_t=\theta-tdh$ for $t\in [0,1]$. Let $Z_t$ be a vector field such that $\iota_{Z_t} \omega=\theta_t$. Since $d\theta_t=\omega$, $Z_t$ is symplectic and defines an embedding $\kappa_t:M_c\times [0,\infty]\rightarrow M_{\leq c}$. Let $U$ be a \nbhd of $M_c$ which lies in the image of $k_t$ for all $t$ and deformation retracts onto $M_c$. Set $X_t=\partial \kappa_t/ \partial t\circ (\kappa_t)^{-1}|_U$. It is a symplectic vector field which vanishes on $\partial M$. $\iota_{X_t}\omega$ is closed and vanishes on boundary so equals $dH_t$ for some $H_t$ on $U$ vanishing on $\pi_c$.
Each $X_t$ can be extended to a Hamiltonian vector field $\tilde{X}_t$ on $M_{\leq c}$. Let $\phi_t$ be the flow of the time dependent vector field $(\tilde{X}_t)$. We have $\phi_t\circ \kappa_0=\kappa_t$  on a \nbhd of $M_c$ so
 $\phi_1^*\theta=({\kappa_0}^{-1})^* {\kappa_t}^*\theta=\theta_1$ near $M_c$. Therefore \bq\label{yyy}\phi_1^*(\theta|_L)=\theta_1|_L=0\eq near the boundary of $L$ and so $\phi_1(L)$ is $\kappa$-compatible. By \eqref{yyy}, $(\phi_t(L))$  is exact.\\

ii) It is just a parameterized version of i).

\end{proof}

\begin{lemma}
Any exact Lagrangian  isotopy  rel boundary in $M_{\leq c}$ which consists only of $\kappa$-compatible Lagrangians can be embedded into a Hamiltonian isotopy.
\end{lemma}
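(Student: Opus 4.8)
The plan is to run the standard (Banyaga-type) Lagrangian isotopy extension argument, in parametrized form and adapted to the manifold-with-boundary $M_{\leq c}$, using $\kappa$-compatibility to gain total control near $M_c$. Write $\omega=d\theta$. Let $\Lambda_t\subset M_{\leq c}$, $t\in[0,1]$, be the given isotopy, and fix a smooth family of embeddings $j_t$ with image $\Lambda_t$ and $j_0$ the inclusion; let $X_t$ be the vector field along $\Lambda_t$ with $X_t\circ j_t=\frac{d}{dt}j_t$ (well defined up to vectors tangent to $\Lambda_t$, which is all we shall need). The one point where the hypotheses interact is this: by $\kappa$-compatibility each $\Lambda_t$ agrees, on $\{\psi\geq c-\epsilon_t\}$, with $\kappa\bigl([-\epsilon_t,0]\times(\Lambda_t\cap M_c)\bigr)$; taking a common $\epsilon>0$ (possible since $[0,1]$ is compact) and using that the isotopy is rel boundary, so $\Lambda_t\cap M_c$ is independent of $t$, we conclude that $\Lambda_t$ is constant on the collar $\{\psi>c-\epsilon\}$. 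We may therefore choose the $j_t$ constant there, so that $X_t\equiv 0$ near $M_c$.

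Since each $\Lambda_t$ is Lagrangian, $j_t^*(\iota_{X_t}\omega)$ is a closed $1$-form, and the hypothesis that the isotopy be exact is, by the usual reformulation, precisely that this form is exact; so choose a family of primitives $H_t\in C^\infty(\Lambda_t)$ depending smoothly on $t$ with $j_t^*dH_t=j_t^*(\iota_{X_t}\omega)$. As $X_t$ vanishes near $M_c$, $H_t$ is locally constant there, and subtracting a locally constant (smooth in $t$) function on the collar we arrange $H_t\equiv 0$ on $\Lambda_t\cap\{\psi>c-\epsilon\}$. Using a normal tubular neighbourhood of $\Lambda_t$, chosen to vary smoothly in $t$ and to be $t$-independent near $M_c$, together with a cutoff supported near $\Lambda_t$, extend $H_t$ to $\widehat H_t\in C^\infty(M_{\leq c})$, smooth in $t$, with $\widehat H_t\equiv 0$ on a neighbourhood of $M_c$; compactness of $M_{\leq c}$ means there is nothing to check ``at infinity''.

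Now let $Y_t$ be the Hamiltonian vector field of $\widehat H_t$, so $\iota_{Y_t}\omega=d\widehat H_t$, and let $\phi_t$ be its flow with $\phi_0=\on{id}$. Since $\widehat H_t$, hence $Y_t$, vanishes near $M_c$, the isotopy $\phi_t$ fixes a neighbourhood of $M_c$ pointwise; in particular it preserves $M_{\leq c}$ and $M_c$, is (trivially) conical near the boundary, and hence carries $\kappa$-compatible Lagrangians to $\kappa$-compatible ones. It remains to check $\phi_t(\Lambda_0)=\Lambda_t$. On $\Lambda_t$ one has $j_t^*(\iota_{Y_t}\omega)=j_t^*d\widehat H_t=j_t^*dH_t=j_t^*(\iota_{X_t}\omega)$, so $\omega(Y_t-X_t,v)=0$ for every $v\in T\Lambda_t$, i.e. $Y_t|_{\Lambda_t}-X_t\in(T\Lambda_t)^{\omega}=T\Lambda_t$: the vector fields $Y_t|_{\Lambda_t}$ and $X_t$ have the same component normal to $\Lambda_t$. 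By the standard uniqueness statement for a family of submanifolds evolved by the normal part of a velocity field, $t\mapsto\phi_t(\Lambda_0)$ and $t\mapsto\Lambda_t$ coincide, as they do at $t=0$.

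The routine parts are the reformulation of ``exact isotopy'' and the final ODE-uniqueness step. The only step needing genuine care is the simultaneous smooth-in-$t$ extension $H_t\mapsto\widehat H_t$ with $\widehat H_t$ vanishing near $M_c$; this is precisely what makes the ambient Hamiltonian isotopy respect the collar and hence the notion of $\kappa$-compatibility. Alternatively one could invoke the parametrized relative Lagrangian neighbourhood theorem and bypass the explicit extension.
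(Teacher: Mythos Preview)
Your argument is correct and is precisely the standard ``zero-flux'' argument that the paper alludes to in its one-line proof (``similar to the proof of the fact that a symplectomorphism of zero flux is Hamiltonian''): closedness of $j_t^*(\iota_{X_t}\omega)$ from the Lagrangian condition, exactness from the hypothesis, extension of the primitive, and the observation that $\kappa$-compatibility plus rel-boundary forces the isotopy to be constant on a collar so the extended Hamiltonian can be taken to vanish there. You have simply spelled out what the paper leaves implicit.
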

The proof is similar to the proof of the fact that a symplectomorphism of zero flux is Hamiltonian.

\begin{definition}
 Let $L$ be a $c$-allowable Lagrangian in $M$, we take $\Lambda=L_{\leq c}$, isotope $\Lambda$ to a $\kappa$-compatible $\Lambda'$ and denote by $\mathfrak{C}_c(L)\subset M$ the Lagrangian with conical end associated to $\Lambda'$. This means that $\mathfrak{C}_c(L)_{>c}$ is the image of $L_c$ under the Liouville flow.
 \end{definition}
 It follows from the above lemmas that $\mathfrak{C}_c(L)$ is well-defined up to conical Hamiltonian isotopy. Moreover if $L_{\leq c}$ and $L'_{\leq c}$ are exact isotopic rel boundary then $\mathfrak{C}_c(L)$ and
$\mathfrak{C}_c(L')$ are conical Hamiltonian isotopic.

\subsection{Symplectic valued topological field theories}\label{tsfts}

A \textit{Lagrangian correspondence} between two symplectic manifolds $M_0$ and $M_1$ is a Lagrangian submanifold of $M^-_0\times M_1$. If $L_{0,1}$ is a Lagrangian correspondence between $M_0,M_1$ and $L_{1,2}$ is a correspondence between $M_1,M_2$ then the \textit{composition} $L_{0,1}\circ L_{1,2}$ is defined as
$$L_{0,1}\circ L_{1,2}:=\{(m,m'') |\exists m'\in M_1 s.t. (m,m')\in L_{0,1} , (m',m'')\in L_{1,2} \}$$
which is a subset of $M_0\times M_1.$

\begin{definition}
 This composition is \textit{embedded} if $L_{0,1}\times L_{1,2}\subset M_0\times M_1\times M_1\times M_2$ intersects  the diagonal $M_0\times \Delta_{M_1}\times M_2$ transversely and the projection $\pi_{0,2}$ embeds the intersection into $M_0\times M_2$. In this case the composition is a Lagrangian submanifold of $M^-_0\times M_2$.
\end{definition}

If $M_i$, $i=0,1,2$ have trivial canonical bundle and we have chosen  trivializations $\eta_{M_i}$ for each $i$ then gradings $\tilde{L}_{0,1}$ and  $\tilde{L}_{1,2}$ determine a grading on $L_{0,2}=L_{0,1}\circ L_{1,2}$, with regard to the trivialization $\eta_{M_0}\wedge\eta_{M_2}$, given by \bq\label{composegrade} \tilde{L}_{0,2}(m,m'')=\tilde{L}_{1,2}(m,m')+\tilde{L}_{0,1}(m',m'')\eq where $m'$ is the unique point such that $(m,m',m',m'')\in \left(L_{0,1}\times L_{1,2}\right)\cap M_0\times\Delta_{M_1}\times M_2$  provided that the composition is embedded. If the $M_i$ are Stein and the   Lagrangian correspondences  have conical ends then it is easy to see that their composition has a conical end as well. Therefore we have the following.
\bq\label{Cofcomposition}
\mathfrak{C}_c(L_{0,1})\circ \mathfrak{C}_c(L_{1,2})\simeq \mathfrak{C}_c(L_{0,1}\circ L_{1,2})
\eq

\begin{definition}
A \textit{generalized Lagrangian correspondence} between symplectic manifolds $M,M'$ consists of a sequence $M=M_0, M_1,\cdots, M_n=M'$ of symplectic manifolds and a sequence $\underline{L}=(L_{0,1},\cdots  L_{i,i+1},\ldots L_{n-1,n})$  such that $L_{i,i+1}$  is a Lagrangian correspondence between $M_i$ and $M_{i+1}$. Generalized Lagrangian correspondences can be composed by concatenation. We call $\underline{L}$ \textit{compact} if each $L_{i,i+1}$ is compact. It is called 
\textit{allowable} if 
all the  $L_{i,i+1}$ are so.
\end{definition}
\begin{definition}
 We denote the composition (concatenation) of $\underline{L}$ and $\underline{L}'$  by $\underline{L}\#\underline{L}'$. Suppose all the manifolds $M_i$ involved have trivial canonical bundle and we have chosen trivializations $\eta_{M_i}$ for each $M_i$. A \textit{grading} on $\underline{L}$ is a lift $\tilde{L}_{i,i+1}$ of $\alpha_{L_{i,i+1}}$ for each $i$ where the phase functions $\alpha_{L_{i,i+1}}$  are with respect to $\eta_{M_i^-}\wedge\eta_{M_{i+1}}$.
\end{definition}
 Assume we have chosen a trivialization $\eta_{M}$ for  the canonical bundle of $M$. The canonical  bundle of $M ^-$ is the dual of $\bigwedge^{\dim M}TM$. Thus we can take $\sqrt{-1}\eta^{-1}$ as a trivialization of the determinant bundle of $M^-$. So the phase function of a Lagrangian $L\subset M$ is the negative of the inverse of the phase function of the same Lagrangian as a subset of $M^-$. (We denote the later one by $L_-$.)  Therefore a grading $\tilde{L}$ of $L$ as a Lagrangian submanifold of $M$ induces, in a natural way, a grading of $L$ as a  submanifold of $M^-$. This new grading equals ${k}/{2}-\tilde{L}$ for some integer $k$. We choose $k=n=\on{dim} L$ therefore we have
 \bq\label{neggrade} \tilde{L}_-=\frac{n}{2}-\tilde{L}. \eq

According to \cite{WW} Section 2.2, the  \textit{symplectic category} is the category whose objects are compact monotone 
symplectic manifolds (including exact ones)
 and whose morphisms are equivalence classes of compact generalized Lagrangian correspondences.  The equivalence relation on morphisms is generated by the following two relations.
Firstly, $$(L_0,L_{0,1},\ldots, L_{i,i+1},\ldots, L_{n-1,n},L_n)$$ is equivalent to $$(L'_0, L'_{0,1},\ldots, L'_{i,i+1}, \ldots, L'_{n-1,n},L'_n)$$ if each $L_{i,i+1}$ is Hamiltonian isotopic to $L'_{i,i+1}$ in $M_i^-\times M_{i+1}$. Secondly $$(L_0, L_{0,1},\ldots, L_{i-1,i}, L_{i,i+1},\ldots, L_n)$$ is equivalent to $$(L_0,L_{0,1},\ldots, L_{i-1,i}\circ L_{i,i+1},\ldots, L_n)$$ whenever the composition
$L_{i-1,i}\circ L_{i,i+1}$ is  embedded. The idea of symplectic category goes back to Weinstein \cite{weisteincat} where he considered only Lagrangian correspondences as morphisms. Since the composition of two Lagrangian correspondences might not be embedded, he did not obtain a genuine category.

\begin{definition}
A $d+1$ dimensional \textit{symplectic valued topological field theory} is a functor from the category ($d$-manifolds, cobordisms) to the symplectic category. Here ``cobordism'' means cobordism modulo isotopy.
\end{definition}

In the present paper we need to include a class of noncompact symplectic manifolds and Lagrangians so we  enlarge  the 
symplectic category.
 \begin{definition}
The \textit{noncompact symplectic category} has  objects of the symplectic category plus  Stein manifolds 
  as objects. Morphisms are given by equivalence classes of generalized Lagrangian correspondences\\ $(L_{0,1},\ldots, L_{n-1,n})$  such that each $L_{i,i+1}$ is 
allowable. The equivalence relation on correspondences is the one in the symplectic category with the difference that  we restrict  the first kind of equivalence of morphisms to include only 
exact isotopies.
\end{definition}

\subsection{Floer cohomology}\label{floercoho}

Let  $\underline{L}$ be a generalized Lagrangian correspondence as in the last section. By adding a trivial Lagrangian correspondence (i.e. the diagonal)  if necessary we can assume that $n=2k+1$ is odd. Define \bq\label{L0}\mathcal{L}_0=L_0\times L_{1,2}\times \cdots L_{2k-1,2k}\eq and
\bq\label{L1}\mathcal{L}_1=L_{0,1}\times L_{2,3} \times \cdots\times L_{2k+1}\eq which are Lagrangian submanifolds of $M=M_0^-\times M_1\times \cdots M_n^-$.
One can, under good circumstances, associate to $\underline{L}$ the Floer cohomology group

\bq\label{HF}HF(\underline{L}):=HF(\mathfrak{C}_c(\cal{L}_0),\mathfrak{C}_c(\cal{L}_1))\eq
for some $c$ large enough. Here we briefly review the construction emphasizing the aspects that are of concern in this paper. See \cite{WW} for more details in the case of compact Lagrangians. First we assume that there is a conical Hamiltonian isotopy of $M$ which makes $\cal{L}_0$ and $\cal{L}_1$ intersect transversally at finitely many points. We call this assumption \textit{finite intersection} of the Lagrangians. It holds if one of the Lagrangians is compact. More generally it holds if one of the correspondences $L_{i,i+1}$ is compact and all the others are proper in the following sense.
\begin{definition}\label{propercorr}
A correspondence $L\subset M^-\times M'$ is called \textit{proper} if for each point $y\in M'$ the set $\{x\in M | (x,y)\in M'\}$  is compact.

\end{definition}

We denote the isotoped Lagrangians by the same notation.
Let $J_i$ be a compatible almost complex structure on $M_i$. If $M_i$ is Stein then we require $J_i$ to be invariant under the flow of Liouville vector field outside a compact set. We call such an almost complex structure \textit{asymptotically invariant}.   From the $J_i$ we get an almost complex structure $$J=(-J_0, J_1,\ldots, (-1)^{n-1} J_n)$$ on $M$.
Let $x,y\in \mathcal{L}_0 \cap  \mathcal{L}_1$ and let $J_t$ be a one parameter family of almost complex structures on $M$ for which there is an $r_0>0$ such that $J_t=J$ outside $M_{\leq r_0}$ for all $t$. Denote by $\cal{M}(x,y)$ the moduli space of the strips $$u:\mathbb{R}\times [0,1]\rightarrow \cal{Y}$$ such that
$\frac{\partial}{\partial t} u(s,t)=J_t \frac{\partial}{\partial s} u(s,t) $ and $u(s,i)\in \cal{L}_i$ for $i=0,1 $  and $u(-\infty,t)=x, u(\infty,t)=y$.
 $\mathbb{R}$ acts on  $\cal{M}(x,y)$ by shifting the parametrization.\\

Consider the abelian group $CF(\mathcal{L}_0,  \mathcal{L}_1)$ generated freely by the intersection points of the two Lagrangians.
The following differential  makes $CF(\mathcal{L}_0,  \mathcal{L}_1)$ into a chain complex and the Floer cohomology $HF(\mathcal{L}_0,  \mathcal{L}_1)$  is defined to be the cohomology of this complex.  \\
\bq\label{dfloer} \partial x=\sum_{y\in \mathcal{L}_0\cap  \mathcal{L}_1} \#\cal{M}_1(x,y)/\mathbb{R}\eq
Here $\cal{M}_1(x,y)$ is the one dimensional part of the moduli space. The count is a priori in $\mathbb{Z}/2$. To be able to define Floer cohomology groups as $\mathbb{Z}$ one needs coherent orientations on the moduli spaces cf. \cite{FOorientation}.
 For this invariant to be well-defined one has to take care of the following issues: transversality of moduli spaces, nonexistence of bubbling, compactness and invariance under Hamiltonian isotopy. In the following discussion we assume that the first two criteria hold and focus on the last two. First compactness.

\begin{lemma}\label{compact}
Assume $(M,\psi)$ is a Stein manifold,  $\cal{L}_0,\cal{L}_1$ are $C$-allowable Lagrangians for some $C\geq r_0$ and that $M_{\leq C}$ contains the intersection points of the Lagrangians. Then for any Riemann surface $S$ with boundary and any $J$-holomorphic map $u:S\rightarrow M$ with $u(\partial S)\subset \cal{L}_0 \cup\cal{L}_1$, the image of $u$ lies in  $M_{\leq C}$ (which is independent of $S$ and $u$).
\end{lemma}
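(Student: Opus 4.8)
The plan is to run the standard maximum-principle argument for $J$-holomorphic curves with boundary on Lagrangians that are conical (or, more precisely, $C$-allowable) at infinity. The key observation is that outside $M_{\leq C}$ the function $\psi\circ u$ is subharmonic. First I would recall that since $J$ is asymptotically invariant under the Liouville flow, the $1$-form $\theta=-d^c\psi$ is (a positive multiple of) the primitive of $\omega$ and $\psi$ is $J$-convex on $M_{> C}$; concretely, $-dd^c\psi(\cdot, J\cdot)>0$ there, which means that for any $J$-holomorphic $u$ the composition $\psi\circ u$ satisfies $\Delta(\psi\circ u)\geq 0$ on the part of $S$ mapping into $M_{>C}$. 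This is the same computation as in the proof of the earlier lemma on the relative vanishing cycle where $f|_W$ was shown real and nonnegative, and it only uses that $u$ is pseudoholomorphic and $\psi$ plurisubharmonic.

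Next I would analyze the boundary. By hypothesis $\cal{L}_0,\cal{L}_1$ are $C$-allowable, so $\psi$ restricted to each has no critical points in $M_{\geq C}$ and the Lagrangians meet the level sets $M_r$ transversally for $r\geq C$. The point is that along such a Lagrangian the Liouville field $Z=\nabla\psi$ decomposes into a tangential and a normal part, and exactness of $L$ (which kills $\theta|_L$ up to an exact term) forces $Z$ to be everywhere transverse to $L$ on $M_{\geq C}$, pointing outward; equivalently $d(\psi|_L)\neq 0$ there. Hence on the boundary portion $\partial S\cap u^{-1}(M_{>C})$ the outward normal derivative of $\psi\circ u$ is strictly positive wherever $\psi\circ u$ would attain a boundary maximum — so Hopf's lemma rules out an interior-of-that-region boundary maximum. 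Combined with subharmonicity in the interior, any maximum of $\psi\circ u$ restricted to the closed set $u^{-1}(M_{\geq C})$ must occur on its topological boundary, i.e. where $\psi\circ u=C$.

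From here the conclusion is a connectedness argument: suppose $u$ hit some point with $\psi\circ u>C$. Restrict to the connected component $\Omega\subset S$ of $u^{-1}(M_{>C})$ containing it. On $\overline\Omega$, $\psi\circ u$ is subharmonic in the interior, has strictly positive outward normal derivative along $\partial S\cap\overline\Omega$, and equals $C$ on the remaining part of $\partial\Omega$ (where $u$ crosses back into $M_{\leq C}$); but then its maximum over $\overline\Omega$ is $>C$ and is attained at an interior or free-boundary point, contradicting the maximum principle plus Hopf. Therefore $u(S)\subset M_{\leq C}$. The main obstacle — and the only place care is needed — is the boundary estimate: justifying rigorously that $C$-allowability (exactness plus no critical points of $\psi|_L$ at infinity plus transversality to level sets) implies $Z$ is outward-transverse to $L$ on $M_{\geq C}$, so that the Hopf lemma genuinely applies on the free boundary. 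Once that is in hand, interior subharmonicity and the maximum principle close the argument. I would also remark that $C\geq r_0$ guarantees $J_t=J$ on $M_{>C}$, so the argument is insensitive to the homotopy of almost complex structures used in the definition of the differential, and that the same proof applies verbatim with $S=\mathbb{R}\times[0,1]$ to the Floer strips, giving the needed a priori $C^0$-bound for compactness.
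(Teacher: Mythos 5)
Your interior step (plurisubharmonicity of $\psi$ gives subharmonicity of $\psi\circ u$ outside $M_{\leq C}$, hence no interior maximum there) and the connectedness wrap-up agree with the paper. The genuine gap is the boundary step, and it is exactly the point you flagged as needing care. First, the geometric claim is false as stated: exactness plus $C$-allowability does \emph{not} force $Z=\nabla\psi$ to be transverse to $L$ on $M_{\geq C}$ — for a Lagrangian with conical end (the model case, and precisely what $\mathfrak{C}_c$ produces) $Z$ is \emph{tangent} to $L$ at infinity; nor is ``$Z$ transverse to $L$'' equivalent to ``$d(\psi|_L)\neq 0$'', which is just the allowability hypothesis. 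Second, even if you had positivity of the outward normal derivative of $\psi\circ u$ at a would-be boundary maximum, that yields no contradiction: Hopf's lemma \emph{asserts} that the outward normal derivative at a boundary maximum of a nonconstant subharmonic function is strictly positive, and a boundary maximum is perfectly compatible with a positive outward normal derivative. So ``the normal derivative is positive, hence Hopf rules out the maximum'' is a non sequitur, and the boundary case is not actually excluded by your argument.

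The mechanism the paper uses runs in the opposite direction. At a boundary maximum $p$ with $\psi(u(p))=R>C$, the \emph{tangential} derivative of $\psi\circ u$ vanishes, so $du(\partial_s)(p)\in T\cal{L}_1\cap TM_R$. Allowability (transverse intersection with the level set) makes $\cal{L}_1\cap M_R$ Legendrian for the contact form $\theta=-d\psi\circ J$ on $M_R$, so $d\psi\bigl(J\,du(\partial_s)(p)\bigr)=0$; the Cauchy--Riemann equation $du(\partial_t)=J\,du(\partial_s)$ then converts this into vanishing of the \emph{normal} derivative, $d\psi\bigl(du(\partial_t)(p)\bigr)=0$, which contradicts the strict positivity given by Hopf's lemma. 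Your proposal never invokes the Cauchy--Riemann equation at the boundary nor the isotropy of $T\cal{L}_1\cap TM_R$, and these are the two ingredients that make the boundary exclusion work; without them the a priori $C^0$-bound is not established.
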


\begin{proof} {\cite{EHS}, \cite{OHnoncompact}}
For $u\in \cal{M}(x,y)$, $\psi \circ u$ cannot have a maximum on the interior of the curve outside $M_{\leq C}$ by the maximum principle. This is because $u$ is $J$ holomorphic outside $M_{\leq r_0}$. Assume it has a maximum on a boundary point $p$ 
 ie. $\max \psi \circ u=\psi \circ u(p)=R>C $.
We can pick holomorphic coordinates on $S$ in a \nbhd of $p$ and therefore regard $u$, in a \nbhd of $p=(s_0,1)$,  as a function on some rectangle $[l_1,l_2]\times [1-\delta, 1]$.
  We have $d\psi (\frac{\partial}{\partial s} u)(s_0,1)=0$. So $\frac{\partial}{\partial s} u(s_0,1)\in T\cal{L}_1 \cap TM_R$. By assumption the intersection is transverse therefore it is Legendrian so $d\psi(J\frac{\partial}{\partial s} u(s_0,1))=0$. But we have $\frac{\partial}{\partial t} u=J \frac{\partial}{\partial s} u$ so $d\psi (\frac{\partial}{\partial t} u)(s_0,1)=0$. This contradicts the strong maximum principle which implies $d\psi (\frac{\partial}{\partial t} u)>0$.
\end{proof}

This enables us to apply the rescaling  argument to show that the limit of a bounded energy sequence of such curves is either a broken trajectory or a  curve with sphere or disc bubbles. Therefore we can compactify $\cal{M}(x,y)$ by adding these limiting curves to it.  If in addition both $M$ and the Lagrangians are  exact, no bubbling occurs and so  the sum in \eqref{dfloer} is finite and we get $\partial^2=0$.

An important property of Floer cohomology in compact manifolds is invariance under Hamiltonian isotopy. However this invariance does not hold for isotopies  with noncompact support. We have the following theorem of Oh.

\begin{theorem}[Oh \cite{OHnoncompact}]
Let $M$ be a Stein manifold 
 and $L,L'$ Lagrangian submanifolds of $M$, $H_t$, $t\in[0,1]$, a time-dependent conical Hamiltonian  on $M$ and $L_t$ the image of $L$ under time $t$ map of $H$. Assume $\cup_t L_t\cap L'$ is compact.\\
\\
 i) We can find another Hamiltonian $H'$ such that 
the time one map of $H'$ equals that of $H$ and $H'$ is positive on $\cup_{s\in [\delta,1-\delta]}  L\cap L_s$ for some $0<\delta<1$.\\
 ii) If in addition $H'$ is conical  then there is a canonical isomorphism\\ $HF(L_0,L')\cong HF(L_1,L')$.

\end{theorem}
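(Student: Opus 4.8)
The plan is to adapt the standard continuation-map argument for Floer cohomology to the noncompact Stein setting, the whole point being that the compactness input is now supplied by Lemma \ref{compact} rather than by compactness of $M$. For part (i), I would first note that the hypothesis that $\bigcup_t L_t \cap L'$ is compact means all the relevant intersection dynamics takes place inside some sublevel set $M_{\leq C}$. The issue is that $H_t$ may fail to be positive on the intersection locus $L\cap L_s$ for intermediate times, which is exactly the obstruction to running the a priori energy estimate that makes the continuation map well-defined. The fix is the usual reparametrization-in-time trick: replace the Hamiltonian isotopy $\phi_t$ generated by $H_t$ by a reparametrized isotopy $\phi_{\beta(t)}$ where $\beta:[0,1]\to[0,1]$ is a smooth surjection which is constant ($\equiv 0$) near $t=0$, constant ($\equiv 1$) near $t=1$, and has $\beta' > 0$ on $[\delta, 1-\delta]$. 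The generating Hamiltonian of the reparametrized isotopy is $H'_t(x) = \beta'(t)\, H_{\beta(t)}(x)$, which has the same time-one map and vanishes near $t=0,1$; then I would modify $\beta$ (or add a suitable time-dependent constant, which does not change the dynamics) so that $H'$ becomes strictly positive on $\bigcup_{s\in[\delta,1-\delta]} L\cap L_s$. Since adding a constant to a Hamiltonian only shifts the action functional by a constant and does not affect positivity on a fixed compact set, this is elementary once one is careful that the modification can be made conical: this is where one uses that the original $H_t$ is conical, so the $C$-cone behavior is preserved under multiplying by $\beta'(t)$ and adding a time-dependent constant (the constant can be taken supported, in the conical coordinate, to not destroy the conical scaling $H'_t\circ\kappa(r,x) = r H'_t(\kappa(0,x))$ for $r$ large).

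For part (ii), assuming now $H'$ is conical, I would construct the continuation chain map $\Phi: CF(L_0, L') \to CF(L_1, L')$ by counting solutions of the $s$-dependent Floer equation $\partial_s u + J_t(\partial_t u - X_{K_s}(u)) = 0$ with boundary on $L'$ and on the moving Lagrangian, where $K_s$ interpolates between $0$ and the chosen Hamiltonian, cut off so that $K_s \equiv 0$ for $s \ll 0$ and $s \gg 0$. The key points to verify, in order: (a) \emph{energy bound} — for such continuation strips the energy is bounded by the action difference plus a curvature term $\int \max_{x}(\partial_s K_s)(x)\, ds$, and the positivity arranged in (i) is precisely what forces this curvature term to have the right sign so that the energy is uniformly bounded; (b) \emph{$C^0$-bound / compactness} — one applies Lemma \ref{compact} (or rather its parametrized continuation-strip analogue) to conclude that all continuation solutions stay in $M_{\leq C}$, using that $J$ is asymptotically invariant and the Hamiltonian term is conical so the maximum principle argument still goes through outside $M_{\leq r_0}$; (c) standard transversality and gluing to get that $\Phi$ is a chain map; (d) the usual homotopy-of-homotopies argument showing $\Phi$ is independent of choices up to chain homotopy and that the composition of continuation maps in both directions is chain-homotopic to the identity, hence $\Phi$ is a quasi-isomorphism; the resulting isomorphism on cohomology is canonical because any two admissible choices of $(K_s, J_s)$ are joined by a homotopy.

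The main obstacle I expect is step (b), the $C^0$ a priori bound for the \emph{continuation} strips: Lemma \ref{compact} as stated applies to $J$-holomorphic maps with boundary on a \emph{fixed} pair of Lagrangians, whereas here the moving Lagrangian sweeps out a region and the equation has an inhomogeneous Hamiltonian term. I would handle this by the standard device of replacing the moving-Lagrangian/inhomogeneous-equation picture by the equivalent picture of honest $\tilde J$-holomorphic strips with boundary on the fixed pair $(L_0, L')$ but in a modified (still Stein, still convex-at-infinity) almost complex structure and with the conical Hamiltonian absorbed into the domain-dependent data; because $H'$ is conical, $\psi$ remains plurisubharmonic-like with respect to $\tilde J$ outside a compact set and the strong maximum principle argument in the proof of Lemma \ref{compact} applies verbatim, with the boundary-point case using that $L'$ (and the image of $L$) meet the level sets $M_R$ transversely, hence Legendrianly, at infinity. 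Once the $C^0$-bound is in hand, the remaining steps (a), (c), (d) are the standard Floer-theoretic package and require no new ideas beyond what is already in \cite{WW} and \cite{OHnoncompact}.
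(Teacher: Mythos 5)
First, a point of comparison: the paper does not prove this statement at all --- it is stated as a theorem of Oh \cite{OHnoncompact} and used as a black box, its only role being to feed (together with Lemma \ref{compact}) into Proposition \ref{floerwell}. So there is no proof in the paper to measure your argument against; what follows is an assessment of your sketch on its own terms.

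Your overall strategy (time reparametrization for (i), continuation maps plus a maximum-principle $C^0$ bound for (ii)) is the expected one, but the two places where the real content lies are asserted rather than proved. In (i), reparametrizing and adding a time-dependent constant does produce a Hamiltonian with the same time-one map which is positive on the compact set $\bigcup_{s\in[\delta,1-\delta]}L\cap L_s$, so the literal statement of (i) survives; but your parenthetical attempt to keep $H'$ conical at the same time is incoherent as written: a genuine constant violates the scaling $H'_t\circ\kappa(r,x)=rH'_t(\kappa(0,x))$ at infinity, while a constant ``supported in the conical coordinate'' is no longer constant, changes the Hamiltonian vector field outside a compact set, and hence changes the time-one map there. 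Since part (ii) hypothesizes that $H'$ is conical, and the application in Proposition \ref{floerwell} needs exactly that combination, this is not a cosmetic point. In (ii), the claim that the positivity arranged in (i) ``forces the curvature term to have the right sign'' does not follow as stated: positivity was arranged only on the intersection locus $\bigcup_{s}L\cap L_s$, whereas the boundary term in the energy identity for your continuation (or moving-boundary) strips involves $H'$ evaluated along the entire moving boundary $u(s,0)\in\phi_{\rho(s)}(L)$, where you have no sign. Either you establish the $C^0$ bound of your step (b) first, after which that term is merely bounded rather than signed and you must explain why a bound suffices, or the positivity has to enter the argument in a different (and unexplained) way --- which is precisely the part of Oh's proof your sketch does not reconstruct. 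So the architecture is plausible, but the mechanism by which the hypotheses (compactness of $\bigcup_t L_t\cap L'$, positivity of $H'$, conicity) actually enter the estimates is missing, and that is the substance of the theorem.
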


\begin{proposition}\label{floerwell}
  Let  $L,L'$ be two Lagrangians in a  Stein manifold $(M,\psi)$ which are $C$-allowable  and satisfy  the finite intersection condition. Then the Floer cohomology $HF(\mathfrak{C}_c(L),\mathfrak{C}_c(L'))$ is well-defined and is invariant under conical Hamiltonian isotopies. It is independent of $c$ when $c> C$ and $L\cap L'\subset M_{\leq c}$.
\end{proposition}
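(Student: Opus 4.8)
The plan is to assemble the statement from the three ingredients developed above: the compactness Lemma \ref{compact}, the exactness hypothesis (which rules out bubbling), and Oh's theorem on conical Hamiltonian invariance. First I would reduce to a convenient model: by definition $\mathfrak{C}_c(L)$ and $\mathfrak{C}_c(L')$ are Lagrangians with conical end, agreeing with $L$, $L'$ inside $M_{\leq C}$ (after an exact isotopy rel boundary, which is legitimate by the lemmas of Section \ref{laginstein}), and equal to the Liouville flow of $L_c$, $L'_c$ outside. In particular they are $C$-allowable, hence $C'$-allowable for every $C'\geq C$, and all their intersection points lie in $M_{\leq C}$ by the allowability condition plus the finite intersection hypothesis (which guarantees transversality after a conical Hamiltonian perturbation; note the perturbation can be taken supported where we like, so it does not move the intersection points out of $M_{\leq C}$).

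The next step is well-definedness of the differential. Apply Lemma \ref{compact} with the constant $C$ there equal to $\max(C,r_0)$: every $J$-holomorphic strip contributing to $\partial$ has image inside $M_{\leq C}$, so the moduli spaces $\mathcal{M}(x,y)$ behave exactly as in a compact manifold. Since $M$ and the conical Lagrangians are exact (exactness of $\mathfrak{C}_c(L)$ is part of allowability, and $M$ being Stein is exact), the standard energy-action identity shows there is no disc or sphere bubbling and that only finitely many homotopy classes carry strips of a given index; hence $\mathcal{M}_1(x,y)/\mathbb{R}$ is finite, the sum in \eqref{dfloer} is finite, and the usual gluing argument (again confined to $M_{\leq C}$, so it is the compact-case argument verbatim) gives $\partial^2=0$. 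This defines $HF(\mathfrak{C}_c(L),\mathfrak{C}_c(L'))$.

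For invariance under conical Hamiltonian isotopy, let $\phi_t$ be a conical Hamiltonian isotopy generated by $H_t$ and consider the isotopy $\phi_t(\mathfrak{C}_c(L))$. Because the isotopy is conical and $\mathfrak{C}_c(L)$, $\mathfrak{C}_c(L')$ have conical ends with $L\cap L'$ finite (hence $\bigcup_t \phi_t(\mathfrak{C}_c(L))\cap \mathfrak{C}_c(L')$ relatively compact, contained in some $M_{\leq C''}$), the hypotheses of Oh's theorem are met. Part (i) replaces $H$ by a Hamiltonian $H'$ with the same time-one map that is positive on the relevant intersection locus, and — using once more that everything is conical — $H'$ can itself be arranged to be conical; then part (ii) of Oh's theorem furnishes a canonical isomorphism $HF(\mathfrak{C}_c(L),\mathfrak{C}_c(L'))\cong HF(\phi_1(\mathfrak{C}_c(L)),\mathfrak{C}_c(L'))$. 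Finally, independence of $c$: for $c'>c>C$ with $L\cap L'\subset M_{\leq c}$, the Lagrangians $\mathfrak{C}_c(L)$ and $\mathfrak{C}_{c'}(L)$ differ only by reparametrizing the conical end, hence are conical Hamiltonian isotopic (Section \ref{laginstein}), and the previous paragraph gives the identification of Floer groups; by Lemma \ref{compact} no strip sees the region beyond $M_{\leq C}$ anyway, so the chain complexes are literally identical.

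The main obstacle I anticipate is not any single step but the bookkeeping needed to guarantee that the perturbation achieving transversality (the finite intersection condition) and the Hamiltonians supplied by Oh's theorem can all be taken \emph{conical}, so that Lemma \ref{compact} continues to apply to the perturbed data — i.e.\ one must check that the class of conical data is large enough to achieve transversality of both the intersection points and the moduli spaces while keeping every $J$-holomorphic curve trapped in $M_{\leq C}$. Once that is in place, the rest is the standard Floer package run inside the compact region $M_{\leq C}$.
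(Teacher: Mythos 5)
Your argument is correct and follows essentially the same route as the paper: the paper's proof is simply a citation of Lemma \ref{compact}, Oh's theorem, and the lemmas of Section \ref{laginstein}, which are exactly the three ingredients you assemble (compactness of strips in $M_{\leq C}$, exactness to exclude bubbling, conical Hamiltonian invariance, and well-definedness of $\mathfrak{C}_c$ up to conical isotopy). Your write-up just spells out the bookkeeping the paper leaves implicit.
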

\begin{proof}It follows from the  last two results along with those in section \ref{laginstein}.
\end{proof}

In order for Floer cohomology to define a well-defined map on the symplectic category, we must understand the effect of composition of Lagrangian correspondences on Floer cohomology. The following important \textit{Functoriality Theorem} is proved in \cite{WW}.

\begin{theorem}[Wehrheim, Woodward \cite{WW}]\label{fctr}
Let $\underline{L}=(L_{0,1},L_{1,2},\ldots,L_{n-1,n})$ be a generalized Lagrangian correspondence between manifolds $M_0,\ldots, M_n$ such that for some $0<j<n$ the composition $L_{j,j+1}\circ L_{j-1,j}$ is embedded. Denote $$ \underline{L}'=(L_0,\ldots,L_{j,j+1}\circ L_{j-1,j},\ldots,L_n).$$ Assume the $M_i$ are compact and monotone with the same monotonicity constant and all  $L_{i,i+1}$ as well as   \eqref{L0} and \eqref{L1} are monotone. 
Assume in addition that each $L_{i,i+1}$ is oriented, relatively spin, graded and  its minimal Maslov number is at least three.  
Then with the induced grading and relative spin structure on $L_{i,i+1}\circ L_{i-1,i}$ there is a canonical isomorphism  
\bq\label{functorialityeq} HF(\underline{L})\cong HF(\underline{L}')\eq of graded abelian groups.
\end{theorem}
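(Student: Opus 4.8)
The plan is to realize both sides as quilted Floer cohomology groups and to compare them by a strip–shrinking degeneration. After the folding of \eqref{L0}–\eqref{L1}, $HF(\underline L)$ is computed by the chain complex $CF(\mathcal L_0,\mathcal L_1)$ in $M=M_0^-\times M_1\times\cdots\times M_n^-$, whose generators are tuples of intersection points and whose differential counts pseudo-holomorphic strips; equivalently, and more conveniently for the argument, this is the quilted Floer complex of the cyclic sequence of correspondences $(L_{0,1},\dots,L_{n-1,n})$, a collection of maps from strips into the $M_i$ matching along seams labelled by the $L_{i,i+1}$. In this language $HF(\underline L')$ is the quilted Floer cohomology of the \emph{same} cyclic sequence with the patch labelled $M_j$ deleted and the two neighbouring seams (labelled $L_{j-1,j}$ and $L_{j,j+1}$) fused into one seam labelled $L_{j,j+1}\circ L_{j-1,j}$.

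First I would introduce a real parameter $\delta>0$, the width of the patch mapping to $M_j$, and recall that quilted Floer cohomology does not depend on the seam widths (a conformal rescaling of that patch identifies the width-$\delta$ moduli spaces with the width-$1$ ones), so the complexes $CF^\delta(\underline L)$ are canonically isomorphic for all $\delta>0$. At $\delta=0$ the $M_j$-patch collapses to a seam between $L_{j-1,j}$ and $L_{j,j+1}$: because the composition $L_{j,j+1}\circ L_{j-1,j}$ is embedded — its defining fibre product is cut out transversally and $\pi_{0,2}$ embeds it — a width-zero quilt is literally the same datum as a quilted strip for $\underline L'$, with the same energy. Hence $CF^0(\underline L)$ is identified with $CF(\underline L')$, and the theorem reduces to the claim that the complexes $CF^\delta(\underline L)$ and $CF^0(\underline L)$ coincide for all sufficiently small $\delta$.

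The main step is the strip–shrinking analysis, showing that for $\delta$ small the index-$0$ and index-$1$ moduli spaces of width-$\delta$ quilts are in oriented, grading-preserving bijection with those of width-$0$ quilts. The gluing direction is an implicit function theorem argument in the appropriate weighted Sobolev setup for the shrinking patch: given a transversally cut out width-$0$ solution, the linearized operators converge and remain surjective as $\delta\to0$, producing a unique nearby width-$\delta$ solution. The compactness direction is the heart of the matter: a sequence of width-$\delta_\nu$ solutions with $\delta_\nu\to0$ and bounded energy subconverges to a width-$0$ solution together with bubbles. Sphere and disc bubbles are excluded exactly as in ordinary Floer theory, using that the $M_i$ are monotone with a common monotonicity constant, that the relevant Lagrangians are monotone, and that every minimal Maslov number is at least three, so the bubbled strata have codimension $\geq 2$. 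The genuinely new phenomenon, and the crux, is the \emph{figure-eight bubble}: a nonconstant holomorphic quilted plane with one seam on $L_{j-1,j}$ and one on $L_{j,j+1}$ that can form where the $M_j$-patch pinches. Here I would invoke a removal-of-singularity theorem together with an energy-quantization estimate for such bubbles, and then use embeddedness of the composition to force any figure-eight bubble to be constant — roughly, embeddedness means the two seams of a would-be figure-eight bubble are pinned together, leaving no room for a nonconstant quilt. This eliminates the only extra codimension-one degeneration, so the width-$\delta$ and width-$0$ complexes agree for small $\delta$.

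Finally I would verify the algebraic decorations. Tracking determinant lines along the degenerating family, using that each $L_{i,i+1}$ is oriented and relatively spin and that the composition inherits an induced relative spin structure, shows the bijection of moduli spaces is orientation-preserving, so it is a chain isomorphism over $\mathbb{Z}$; and the induced grading \eqref{composegrade} on $L_{j,j+1}\circ L_{j-1,j}$ is precisely the one making Maslov indices of generators match across the two complexes, which upgrades the statement to the graded isomorphism \eqref{functorialityeq}. The hard part, and the place where the hypotheses are essential, is the figure-eight bubble analysis: obtaining the a priori $C^0$ and derivative estimates and the removable-singularity statement for quilted curves near a collapsing seam, and converting embeddedness of the composition into the nonexistence of nonconstant figure-eight bubbles.
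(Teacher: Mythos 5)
The paper does not prove this statement at all: Theorem \ref{fctr} is imported verbatim from Wehrheim--Woodward \cite{WW} and used as a black box, so there is no internal proof to compare against. Measured against the actual argument in \cite{WW}, your outline reproduces the right strategy: fold into $CF(\mathcal{L}_0,\mathcal{L}_1)$, reinterpret as quilted Floer theory with a width parameter $\delta$ for the $M_j$ patch, identify the width-zero complex with $CF(\underline{L}')$ using embeddedness of $L_{j,j+1}\circ L_{j-1,j}$, and then show the $\delta>0$ and $\delta=0$ index-$0$ and index-$1$ moduli spaces agree for small $\delta$, with monotonicity, common monotonicity constant, and minimal Maslov number $\geq 3$ ruling out sphere and disc bubbles, and orientations/gradings via relative spin structures and \eqref{composegrade}. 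Two caveats. First, what you have written is a roadmap, not a proof: the strip-shrinking a priori estimates, the compactness statement for quilts with a collapsing patch, and the gluing/implicit-function-theorem step in weighted Sobolev spaces are precisely the content of \cite{WW} and are only named here. Second, your treatment of the crux is not the route actually available in \cite{WW}: you invoke a removal-of-singularity theorem for figure-eight bubbles and then argue embeddedness pins the two seams together. Wehrheim and Woodward explicitly do not have (and do not use) removal of singularity for the figure-eight bubble; they exclude the problematic degeneration indirectly, by an energy-quantization argument under the monotonicity hypotheses showing that any energy concentration in the shrinking patch would cost at least a fixed quantum and hence is incompatible with the index-$0$/index-$1$ moduli spaces being compared (the removable-singularity theory for figure-eight bubbles was developed only later, by Bottman and Bottman--Wehrheim). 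So if you intend your sketch as a proof, the figure-eight step as stated rests on a result you have not supplied and which is not in the cited source; either substitute the monotonicity/quantization argument of \cite{WW} or supply the quilted removal-of-singularity and energy-gap statements you are appealing to.
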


\begin{proposition}
 If the $M_i$ are Stein and $L_{i,i+1}$ are allowable, relatively spin  and satisfy the finite intersection condition then \eqref{functorialityeq} holds.

\end{proposition}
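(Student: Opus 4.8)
The plan is to reduce the Stein, allowable case of the Functoriality Theorem to the compact-monotone case proved in \cite{WW}, by replacing all noncompact Lagrangians with the conically-ended models provided by the $\mathfrak{C}_c$ construction of Section \ref{laginstein}, and then checking that every step in the Wehrheim--Woodward argument goes through verbatim in a large sublevel set $M_{\leq C}$. First I would apply \eqref{Cofcomposition} together with Proposition \ref{floerwell} to rewrite both sides of \eqref{functorialityeq} in terms of conically-ended Lagrangians: on the left, $HF(\underline L)$ is by definition $HF(\mathfrak C_c(\mathcal L_0),\mathfrak C_c(\mathcal L_1))$ with $\mathcal L_0,\mathcal L_1$ as in \eqref{L0}, \eqref{L1}; on the right, $\mathfrak C_c(L_{j,j+1}\circ L_{j-1,j})\simeq \mathfrak C_c(L_{j,j+1})\circ\mathfrak C_c(L_{j-1,j})$ by \eqref{Cofcomposition}, so the composed correspondence $\underline L'$ again has allowable (indeed conically-ended) entries and its associated $\mathcal L_0',\mathcal L_1'$ differ from $\mathcal L_0,\mathcal L_1$ only by the single embedded composition. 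Thus it suffices to prove the isomorphism after fixing a large enough $c$ and working with conically-ended representatives throughout.

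Next I would invoke the compactness Lemma \ref{compact}: since all the Lagrangians involved are $C$-allowable for a common $C$ and the intersection points lie in $M_{\leq C}$, every $J$-holomorphic strip (and every bubble) contributing to either differential has image contained in the fixed compact set $M_{\leq C}$. This is the crucial point that makes the noncompact setting behave like the compact one: the moduli spaces $\mathcal M(x,y)$ entering \eqref{dfloer}, the corresponding quilted moduli spaces that appear in the Wehrheim--Woodward proof, and the ``strip-shrinking'' degeneration all take place inside $M_{\leq C}$, where the geometry is that of a compact manifold with contact-type boundary and the almost complex structures are asymptotically invariant. Consequently the a priori $C^0$ bounds, Gromov compactness, and the Fredholm/transversality theory are all available exactly as in the monotone compact case. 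The monotonicity and minimal-Maslov-number hypotheses of Theorem \ref{fctr}, the relative spin structures, and the gradings are all assumed here too (the gradings being handled by the phase-function formalism of Section \ref{grading}, in particular \eqref{composegrade}), so the orientation and index bookkeeping in \cite{WW} carries over unchanged.

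With compactness in hand, the remaining content is to run the Wehrheim--Woodward strip-shrinking argument inside $M_{\leq C}$: one interpolates between the quilted Floer complex for $\underline L$ and the quilted Floer complex for $\underline L'$ through a family of widths, uses the embeddedness of $L_{j,j+1}\circ L_{j-1,j}$ to identify the limiting configurations, and obtains a chain-level isomorphism. The only modification needed is at the boundary $M_C$: because the conically-ended Lagrangians meet $M_C$ transversely (in fact Legendrianly) and the $J_t$ are invariant under the Liouville flow near $M_C$, no holomorphic curve can escape, so the cobordism argument between the two moduli problems stays within the compact region and no new phenomena arise. Finally I would note independence of the auxiliary choices ($c>C$, the $\kappa$-compatible representative $\Lambda'$, the asymptotically invariant $J_i$) via Proposition \ref{floerwell} and Lemma 3.1.13, so the isomorphism \eqref{functorialityeq} is canonical.

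The main obstacle I anticipate is \emph{not} any single hard estimate but rather the verification that the quilted moduli spaces used in the Wehrheim--Woodward proof — as opposed to the ordinary strips of Lemma \ref{compact} — also satisfy the $C^0$-confinement to $M_{\leq C}$ and the requisite transversality with asymptotically invariant $J$; this requires a version of the maximum-principle argument of Lemma \ref{compact} adapted to quilted surfaces and seam conditions, and care that ``strip-shrinking'' is compatible with the conical ends. I expect this to be routine given \cite{EHS}, \cite{OHnoncompact} and \cite{WW}, but it is where the real work lies, so I would state it as a lemma and sketch the maximum-principle computation rather than leave it implicit.
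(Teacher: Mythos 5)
Your proposal is correct and follows essentially the same route as the paper: identify the generators, then use the maximum-principle compactness of Lemma \ref{compact} to confine the quilted/strip-shrinking moduli spaces of the Wehrheim--Woodward argument to a fixed compact sublevel set independent of the shrinking parameter, after which their proof applies verbatim. The only slip is your claim that the monotonicity and minimal-Maslov-number hypotheses of Theorem \ref{fctr} are ``assumed here too'': they are not part of the statement, and the paper instead observes that exactness (built into allowability) implies monotonicity and rules out bubbling, so those hypotheses are not needed.
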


\begin{proof}
It is easy to see that the generators for the two Floer groups are in one-to-one correspondence.  
Take $x,y\in \cal{L}_0\cap\cal{L}_1$ and let $\cal{M}_\delta(x,y)$ be the   moduli space of pseudoholomorphic strips $u=(u_0,\ldots u_n)$ where $$u_i:[0,1]\times \mathbb{R}\rightarrow M_i$$ if $i\neq j$ and $u_i:[0,\delta]\rightarrow M_j$ 
 and with boundary condition for $u$ given by $\underline{L}$ and with $x$ and $y$ as asymptotic points.
Let $ \cal{M}'(x,y)$ be the moduli space of strips  $$(u_0,\ldots,u_{i-1},u_{i+1},\ldots,u_n):[0,1]\times \mathbb{R}\rightarrow M_0\times \cdots M_n$$ 
with boundary condition $\underline{L}'$ and asymptotic points $x,y$. Since the $M_i$ are Stein and the Lagrangians are allowable and satisfy the finite intersection property then Lemma \ref{compact} implies that the holomorphic curves  in $\cal{M}_\delta(x,y)$ and $ \cal{M}'(x,y)$ stay in a compact submanifold of $M_0\times \cdots\times M_n$ (which doesn't depend on $\delta$) and so the proof proceeds as in \cite{WW} to show that for $\delta$ small enough the two moduli spaces are bijective.
Note that exactness of the $L_{i,i+1}$ implies monotonicity and rules out bubbling so we do not need the assumption on the minimal Maslov index in \ref{fctr}.
\end{proof}

\begin{remark}
Assume  we have two Hamiltonian isotopic Lagrangian correspondences  $L_0, L_1$   and another correspondence $L'$ such that both compositions $L_0\circ L'$  and $L_1\circ L'$ are embedded. In general we don't know if  $L_0\circ L'$  and $L_1\circ L'$ are Hamiltonian isotopic or not.  However the proof of the Functoriality Theorem implies that Floer cohomology is invariant under such an isotopy.
\end{remark}

\section{A Symplectic Valued Topological Field Theory}

\subsection{Tangles}

A tangle $T$ is defined to be a compact one-dimensional submanifold of (a diffeomorphic image of) $\mathbb{C}\times [0,1]$ such that $i(T):=(T\cap \mathbb{C})\times \{0\}\subset \mathbb{R} \times  \{0\}$ and $t(T):=(\partial T\cap \mathbb{C})\times \{1\}\subset \mathbb{R} \times  \{1\}$. The second assumption makes $i(T)$ and $t(T)$ ordered sets.  In this paper we deal only with tangles with an even number of initial points and end points. If $\# i(T)=2m, \#t(T)=2n$ we say $T$ is an $(m,n)$-tangle and write  $m T n$. We also allow $m$ and/or $n$ to be zero.  
\begin{definition}
Two tangles $T, T'$ are called \textit{equivalent} if there is a continuous family ${T_t}$  of tangles for ${t\in [0,1]}$ such that $T_0=T$ and $T_1=T'$ and the order of $i(T_t)$ and of $t(T_t)$ is fixed.
\end{definition}

 \begin{figure}[ht]
\includegraphics[scale=0.6]{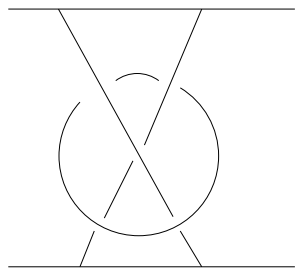}
\caption{A $(1,1)$-tangle}
\label{atangle}
\end{figure}

Two tangles  $T_1, T_2$ can be composed (concatenated) if $t(T_1)=i(T_2)$. Two equivalence classes $[T_1]$ and $[T_2]$ of tangles can be composed if
$\#t(T_1)=\#i(T_2)$ and composition is done using the ordering on $t(T_1)$ and $i(T_2)$. Composition of tangles is denoted by juxtaposition.
We will use the  notation $id_m$, $\cap_{i;m}$, $\cup_{i;m}$ and $\sigma_{i;m}$ for the elementary tangles in figures \ref{elembraids} and \ref{elementangles} where $m$ denotes the number of the strands. We might ignore $m$ when there is no confusion. 
When we say a tangle $T$ is equivalent to, say, $\cap_{i;m}$, we implicitly have a one to one correspondence between $i(T)$ and $\{1, 2,\ldots, 2m-2  \}$ and also between $t(T)$ and $\{1, 2,\ldots, 2m\}$ in mind.\\

A \textit{decomposition} of $T$ is a sequence of tangles 
\bq\label{tangdeco}n_0 T_1 n_1 T_2\ldots n_{l-1} T_l n_l \qquad n_0=m, n_l=n\eq such that $T$ is equivalent to $T_1 T_2 \cdots T_l$.  
It is possible to express any $T$  (not uniquely) as a composition of elementary tangles.
\textit{Crossingless matchings} (section \ref{crossless}) are a special class of $(0,n)$ or $(n,0)$-tangles.  Given a set of $2n$ points on the plane, a crossingless matching is a set of $n$ nonintersecting curves each joining a pair of the given points. In the context of tangles a crossingless matching is regarded as a subset of $\bb{C}\times [0,1]$.
 \begin{definition}
 Let $\pz{C}_n $ be the set of isotopy (in $\mathbb{C}$) classes  of crossingless matchings between $2n$ points. \end{definition}The cardinality of $\pz{C}_n$ equals the $n$th Catalan number.

 \begin{figure}[ht]
\includegraphics[scale=0.4]{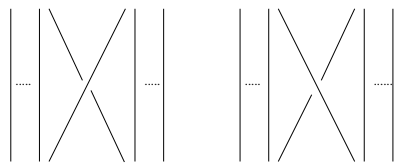}
\caption{The braids $\sigma_i$ and ${\sigma_i}^{t}$}
\label{elembraids}
\end{figure}

\begin{figure}[ht]
\includegraphics[width=4.5in]{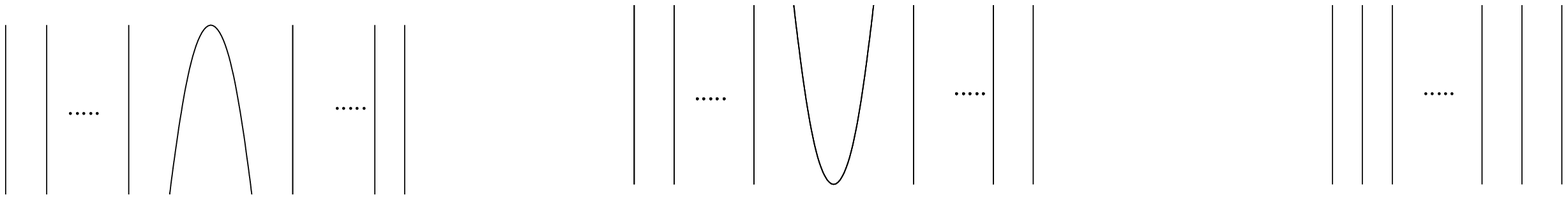}
\caption{$\cap_{i,m}$, $\cup_{i,m}$ and $id_m$}
\label{elementangles}
\end{figure}

  One can define a category $\mathbf{Tang}$ whose objects are natural numbers and $\hom(m,n)$ consists of equivalence classes of $(m,n)$-tangles. 
  $\mathbf{Tang}$ has a monoidal structure given by putting two tangles $kTl$ and $mTn$ ``side-by-side'' and obtain a $(k+m,l+n)$-tangle. We denote this by $T\oplus T'$. To each $(m,n)$-tangle $T$ there is assigned a ``mirror image'' $T^t$ which is a $(n,m)$-tangle.
   There is a generators and relations description of $\mathbf{Tang}$ due to Yetter \cite{Yetter} whose proof relies on Reidemeister's description of plane diagram moves.


\begin{lemma}[Yetter \cite{Yetter}]
The following  are all the commutation relations between elementary tangles where ``$=$'' means equivalence. If $|i-j|>1$ we have:\\
\begin{eqnarray}
\sigma_i\sigma_j&=&\sigma_j\sigma_i\\
\cap_{i;m}\cup_{j;m}&=&\cup_{j-2,m}\cap_{i;m}\\
\cap_{i;m}\sigma_{j;m}=\sigma_{j-2;}\cap_{i;m}  &\quad& \cup_{i;m}\sigma_{j;m}=\sigma_{j+2;m}\cup_{i;m}.
\end{eqnarray}

And for any $i$ we have:
\begin{eqnarray}
\sigma_{i}\cup_{i}&=&\cup_{i}\\
\sigma_i \sigma_i^t&=&id\\
\sigma_i\sigma_{i+1}\sigma_i&=&\sigma_{i+1}\sigma_i\sigma_{i+1}\\
\label{capcup}\cap_{i;m}\cup_{i+1;m}&=&id_{m-1} \\
\sigma_i \cup_{i+1} =\sigma_{i+1}^t\cup_i &\qquad& \sigma_i^t \cup_{i+1}=\sigma_{i+1}\cup_i.
\end{eqnarray}
\end{lemma}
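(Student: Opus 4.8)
The plan is to establish the statement in two parts: \emph{soundness}, that each displayed equation is a genuine equivalence of tangles, and \emph{completeness}, that these relations generate every equivalence between two composites of elementary tangles. The second part is a repackaging of Reidemeister's theorem, and is where the real work — carried out in \cite{Yetter} — lies.

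For soundness, each relation is checked by writing down an explicit isotopy $T_t$ of tangles rel the ordered boundary sets $i(T)$, $t(T)$. The relations with $|i-j|>1$ are immediate from the bifunctoriality of the side-by-side operation $\oplus$: in each case the two elementary tangles being composed are supported on disjoint blocks of strands, so up to the monoidal structure they have the form $(A\oplus\tau\oplus B)\circ(A'\oplus\tau'\oplus B')$ with $\tau$ and $\tau'$ sitting in different slots, and the interchange law lets one commute them. The remaining relations are the familiar local moves: $\sigma_i\sigma_i^t=id$ is the Reidemeister~II cancellation of an oppositely signed crossing pair; $\sigma_i\sigma_{i+1}\sigma_i=\sigma_{i+1}\sigma_i\sigma_{i+1}$ is the Reidemeister~III / braid move; $\cap_{i;m}\cup_{i+1;m}=id_{m-1}$ is the zig-zag (snake) straightening; $\sigma_i\cup_i=\cup_i$ untwists a crossing sitting directly under a turnback joining the two crossed strands; and $\sigma_i\cup_{i+1}=\sigma_{i+1}^t\cup_i$ together with its mirror pulls a cap past an adjacent crossing. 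Each of these is visibly realized by a compactly supported isotopy in $\C\times[0,1]$ fixing the ordered endpoint sets pointwise, hence by an equivalence in the sense of the definition of equivalence of tangles.

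For completeness, one first notes that every $(m,n)$-tangle is equivalent to one in generic position for the height function (the projection to $[0,1]$): all critical points of the height function and all crossings occur at distinct heights. Slicing at the regular levels exhibits such a $T$ as a composite of elementary tangles, which proves the ``generators'' assertion implicit in the lemma. Now suppose $T_1\cdots T_k$ and $T_1'\cdots T_l'$ are two such composites representing equivalent tangles. Passing to plane diagrams, Reidemeister's theorem — in the refined form that also records the planar isotopies sliding crossings and critical points past one another — asserts that the two generic-position diagrams are joined by a finite sequence of elementary moves of three types: (i) interchanging the heights of two features supported on disjoint blocks of strands, (ii) the moves R1, R2, R3, and (iii) creation/annihilation of a cap--cup pair (the zig-zag move). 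Reading each move off as a word identity between elementary tangles produces one of the displayed relations or an immediate consequence of them — for instance R1 follows from $\sigma_i\cup_i=\cup_i$ together with $\cap_{i;m}\cup_{i+1;m}=id_{m-1}$. Concatenating these identities shows that $T_1\cdots T_k$ and $T_1'\cdots T_l'$ are related by the displayed relations.

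The main obstacle is precisely this last step: proving the appropriate refinement of Reidemeister's theorem for tangle diagrams in generic position, organizing its elementary moves, and verifying that each reduces to the listed relations while the list itself is neither redundant nor deficient. This bookkeeping is the content of Yetter's paper \cite{Yetter}, and I would invoke it directly; the genuinely self-contained portion of the argument is the soundness verification above.
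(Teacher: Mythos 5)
Your proposal is correct and follows essentially the same route as the paper: the paper gives no independent proof of this lemma, attributing it to Yetter and noting only that Yetter's proof relies on Reidemeister's description of plane diagram moves, which is exactly the completeness step you defer to \cite{Yetter}. Your added soundness check (exhibiting each relation as an explicit isotopy, with the $|i-j|>1$ cases from the interchange law for $\oplus$) is routine and consistent with the paper's treatment.
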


\vspace{.6cm}

Given two decompositions of two equivalent tangles, the following lemma provides a natural  way of converting one to the other.
\begin{lemma}\label{tangledecomp}
If $T, T'$ are two equivalent tangles and $D:T=T_1T_2\cdots T_m$, $D':T'=T'_1T'_2\cdots T'_m$ are decompositions into elementary tangles then one decomposition can be converted to the other by a sequence of the above moves.
\end{lemma}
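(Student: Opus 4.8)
The plan is to reduce the statement to the classical theorem of Reidemeister–Yetter on plane diagrams. The key observation is that a decomposition of a tangle $T$ into elementary tangles as in \eqref{tangdeco} is essentially the same data as a generic plane diagram for $T$ (read from bottom to top, each horizontal slice containing exactly one crossing, cap, or cup), together with a choice of how to slide the critical points of the height function past each other and past the crossings. So the first step is to make this correspondence precise: given a decomposition $D: T = T_1 T_2 \cdots T_m$ into elementary tangles, isotope $T$ so that the projection to the $[0,1]$ factor is a Morse function whose critical points (and crossings) occur at distinct heights, one per level $n_{i-1}\, T_i\, n_i$; this recovers $D$ up to the commutation moves $(2.3.2)$–$(2.3.5)$ which merely reshuffle critical points at different heights that do not interact (this is the ``$|i-j|>1$'' family of relations).

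Next I would invoke the result of Yetter quoted above (or equivalently Reidemeister's theorem, adapted to tangle diagrams with fixed endpoint orders): any two plane diagrams representing equivalent tangles are connected by a finite sequence of (i) planar isotopies of the diagram keeping the height function generic, and (ii) the Reidemeister-type moves, which are precisely the relations $(2.3.1)$ and $(2.3.6)$–$(2.3.9)$ together with the ``turnback'' relation \eqref{capcup}. Planar isotopies of type (i) that pass through a codimension-one degeneration of the height function — where two critical points collide, or a critical point passes a crossing, or a max/min is created or cancelled — are exactly realized by the listed commutation moves; a birth/death of a cap-cup pair is the move \eqref{capcup}. Thus every elementary modification of the diagram is one of the listed moves, and conversely.

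So the proof assembles as follows: (1) from $D$ and $D'$, produce generic plane diagrams $\Gamma$ and $\Gamma'$ of $T$ and $T'$; (2) since $T \simeq T'$, apply Yetter's / Reidemeister's theorem to get a finite sequence of elementary diagram modifications from $\Gamma$ to $\Gamma'$; (3) translate each elementary modification into one of the moves of Lemma~2.3.10, checking case by case (crossing–crossing slide, cap/cup slide past a crossing, cap–cup slide, Reidemeister I/II/III, turnback); (4) observe that the passage from $D$ to $\Gamma$ and from $\Gamma'$ to $D'$ is itself realized by moves of the first (commutation) family, since it only involves rearranging the heights of non-interacting elementary pieces. Concatenating gives the desired sequence of moves converting $D$ into $D'$.

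The main obstacle is step (3)/(4): making rigorous the dictionary between ``generic moment in a diagram isotopy'' and ``one application of a listed move.'' One must be careful that Yetter's theorem is stated for the relevant flavour of tangles (unoriented, with \emph{ordered} endpoints fixed, even number at top and bottom) and that the move list in Lemma~2.3.10 is genuinely complete for that flavour — in particular that the commutation moves suffice to account for \emph{all} reorderings of heights, including those involving the trivial strands $id_m$ implicit in writing $\cap_{i;m}$, $\cup_{i;m}$, $\sigma_{i;m}$ with a fixed strand count. A clean way to organize this is to define a ``height-ordered diagram'' as a word in the elementary generators and show directly that two such words represent equivalent tangles iff they are related by the listed relations, which is exactly the content of Yetter's presentation; then Lemma~2.3.11 is just the assertion that the presentation is finite and the relations are the listed ones, i.e. essentially a restatement of the Reidemeister–Yetter theorem in the language of decompositions.
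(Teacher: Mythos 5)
Your argument is sound in outline, but it follows a genuinely different route from the paper. You reduce the lemma to the Reidemeister--Yetter presentation of the tangle category: decompositions are read as generic height-ordered diagrams, and completeness of the listed moves is imported from Yetter's theorem (whose proof is itself Reidemeister-style diagram combinatorics). The paper instead gives a self-contained differential-topological argument: it chooses Morse functions $f$, $f'$ on $T$, $T'$ inducing the decompositions $D$, $D'$, builds an ambient diffeomorphism $\phi$ of $\mathbb{C}\times[0,1]$ with $\phi(T)=T'$ by integrating a vector field obtained from the isotopy between the tangles, and then applies Cerf theory to a path $f_t$ joining $f$ to $\phi^* f'$: the finitely many birth/death events of cancelling critical points realize exactly the move \eqref{capcup} (or a merge/split of adjacent elementary pieces, which amounts to one of the listed moves), while the isotopies between events only reorder critical values and hence produce the commutation moves. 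What your route buys is brevity and a clean conceptual statement---the lemma becomes essentially the completeness half of Yetter's presentation, which the paper does cite; what it costs is that the substantive content is outsourced: you must check that Yetter's theorem applies verbatim to this flavour of tangles (unoriented, even, with ordered fixed endpoints) and that his relation list coincides with the one given here, and you must still carry out the case-by-case dictionary of your step (3), which is where the actual work lives. The paper's Cerf-theoretic proof performs that bookkeeping directly on the tangle, never projecting to plane diagrams, and so stays independent of the quoted Yetter lemma rather than risking a near-circular appeal to it.
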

\begin{proof}
We can regard each decomposition as being inside $\mathbb{C}\times [0,1]$. We can also find Morse functions $f,f'$ on $T$ and $T'$ respectively such that
the decomposition of $T$ (resp $T'$) by critical sets of $f$ (resp. $f'$) yields $D$ (resp. $D'$); in other words $f$ has critical points separating each $T_i$ from $T_{i+1}$ and no more. There is a diffeomorphism $\phi$ of $\mathbb{C}\times [0,1]$ such that $\phi(T)=T'$.     The reason is that because $T$ and $T'$ are equivalent,    there is a family $T(t)$ of tangles with the same boundary points such that $T(0)=T,T(1)=T'$. 
Because strands of $T(t)$ never intersect, we can obtain a time dependent vector field on $T(t)$ by differentiation. This vector field can be smoothly extended to $\bb{C}\times [0,1]$. Let $\phi$ be the time one map of this vector field. Then $\phi(T)=T'$.

Let $f'' =\phi^*f'$ and $T''_i=\phi^{-1}(T'_i)$. So we get a decomposition $D'':T''_1,\cdots, T''_m$ of $T$ induced by $f''$.  According to Cerf theory \cite{Cerf}, there is a family $f_t$ of smooth functions such that $f_0=f, f_1=f''$ and $f_t$ is Morse except for finitely many times $t_1,\cdots, t_k$ and at each $t_i$ a pair of canceling critical points is introduced or deleted. Since $T$ is one dimensional, each $t_i$ has the effect of either (1) merging two adjacent handles (i.e. two adjacent elementary tangles) or decomposing one into two or (2) the effect of the  move \eqref{capcup} above.  In the first case the effect of merging two adjacent handles  and then separating into two new ones is equivalent to one of the  above moves.

Now between each $t_i,t_{i+1}$, each handle can be isotoped  to an equivalent one. The only way this can change the decomposition is by changing the value of $f_t$ at critical points and thereby changing the order of the critical points in the decomposition.  This also has the effect of commuting the handlebodies in the decomposition.

\end{proof}

The invariant that we define in this paper is an invariant of \textit{oriented tangles}. An oriented tangle comes with an orientation of each one of its components.  Two example are shown in Figure \ref{posnegbraid}. When considering commutation relations between tangles we ignore the orientation. 

\begin{figure}[ht]
\includegraphics[scale=0.6]{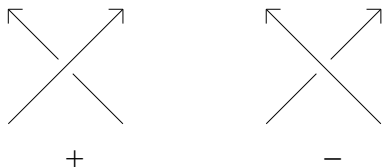}
\caption{Oriented braids $\sig^+$ and $\sig^-$}
\label{posnegbraid}
\end{figure}

\subsection{The functor}\label{func}

Let \bq\label{tdecomp}n_0 T_1 n_1 T_2\ldots n_{l-1} T_l n_l\eq be a decomposition of an oriented tangle $T$.
%
Set $\nu_j=i(T_j)$ and $\nu_{l+1}=t(T_l)$
 We have $\nu_i\in \mathrm{Conf}_{n_i}$ for $i=0,\ldots, l$.
To each $T_i$ we want to associate a Lagrangian correspondence $L_{i,i+1}=L_{T_i}$ between $\mathcal{Y}_{n_i,\nu_i}$ and $\mathcal{Y}_{n_{i+1},\nu_{i+1}}$. 
In this way we can associate to $T $ a generalized Lagrangian correspondence\\
 \begin{equation}\label{Phi}
 \Phi(T)=(L_{0,1},L_{1,2},\ldots, L_{n-1,n})\{ -m-w\}
 \end{equation}
\\
from $\mathcal{Y}_{n}$ to $\mathcal{Y}_{m}$. Here $m$ and $w$ are the number of cups and the writhe (number of positive crossings minus the number of negative ones) of the decomposition respectively. 

If $T_k$ is an elementary  braid in $Br_{2m}$, we set $L_{T_k}$ to be $ \on{graph} (h^{res}_\beta)$   
regardless of the orientation of the braid. Of course we can extend this definition to any braid.
We note that the symplectomorphisms $h^{res}_\beta$ are defined only on compact submanifols of the $\cal{Y}_m$ however this does not cause any problem since we are going to take the $\mathfrak{C}_c$ of these Lagrangians.
%
Let $V_i$ be the relative vanishing cycle  for the map $f$ in Lemma \ref{2nbhd} where $i$th and $(i+1)$th eigenvalues ($\mu_i, \mu_{i+1}$) of $\nu_k$ come together  at some point $\mu$. 
 There is a theorem of T.~Perutz (generalizing a former result of P.~Seidel) which describes monodromy maps around singularities of symplectic Morse-Bott fibrations as Dehn twists. Recall that  a symplectic Morse-Bott fibration (also called Lefschetz-Bott fibrations) over a disc $D$ consists of an almost complex  manifold $(E,J)$, a closed two-form $\Omega$ on $E$ and a $J$-holomorphic map $\pi: E\ra D$ such that the critical point set of $\pi$ is a submanifold of $E$  and the complex Hessian matrix of $\pi$ is nondegenerate. The form $\Omega$ is required to be a symplectic form when restricted to each fiber.

\begin{theorem}[Perutz \cite{PerutzI}, Theorem 2.19]
 Let $\pi: E \rightarrow D$ be  a symplectic Morse-Bott fibration  over the closed disc $D$ which has only the origin as singular value. If in addition the fibration is normally Kahler, ie.  a neighborhood of the critical point set of $\pi$ 
is foliated by $J$-complex normal slices  such that $J$  restricted to each slice  is integrable and the restriction of $\Omega$ to each fiber is Kahler, then
 the monodromy map around  the origin is Hamiltonian isotopic to the fibred Dehn twist $\tau_V$ along the vanishing cycle $V$ for the map $\pi$.
\end{theorem}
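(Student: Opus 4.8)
The plan is to localise the monodromy near the critical locus, reduce to an explicit local model using the normally K\"ahler hypothesis, compute the monodromy of that model directly, and reassemble the answer over the base. First, away from a neighbourhood $N$ of the critical point set $Z=\on{Crit}(\pi)$ the map $\pi$ is a proper submersion and $\Omega$ is fibrewise symplectic, so symplectic parallel transport is defined on $E\setminus N$; hence the monodromy around $\partial D$ may be chosen to equal the identity on $E\setminus N$ and to be supported in $N$. It therefore suffices to analyse the restricted fibration $\pi\colon N\to D$.

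Next I would use the foliation by $J$-holomorphic normal slices supplied by the normally K\"ahler structure. Along each slice the Morse--Bott condition together with a holomorphic normal-form lemma identify $\pi$ with the standard quadratic map $q(z)=z_1^2+\cdots+z_k^2$ on a ball $B\subset\mathbb{C}^k$, where $k$ is the complex codimension of $Z$ in $E$, and by hypothesis $\Omega$ restricts to a K\"ahler form on the slice. A Moser argument, carried out slicewise and smoothly in the $Z$-direction, then replaces $\Omega|_{\mathrm{slice}}$ by the flat form $\frac{i}{2}\sum_j dz_j\wedge d\bar z_j$ up to a Hamiltonian isotopy of the fibration supported near $Z$; shrinking $N$ keeps every interpolating $2$-form fibrewise nondegenerate, which is precisely what allows the correction to be realised by a \emph{Hamiltonian} rather than merely smooth isotopy. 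This reduces the computation to the standard $A_1$ Milnor fibration $q$, fibred trivially over $Z$.

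Then I would compute the monodromy of the model $q\colon\mathbb{C}^k\to\mathbb{C}$ with its flat K\"ahler form around a small circle $\epsilon=re^{i\theta}$, $\theta\in[0,2\pi]$. Since the model fibres are noncompact the naive symplectic parallel transport need not close up, so one integrates instead the horizontal lift of the rescaled parallel transport of the kind described in Section~\ref{symplstr}, which can be written down explicitly; going once around the origin yields the model Dehn twist $\tau_{S^{k-1}}$ along the vanishing sphere $S^{k-1}\subset q^{-1}(\epsilon)$, the rescaling parameter being absorbed up to Hamiltonian isotopy. Performing this in the family over $Z$ produces the fibred Dehn twist $\tau_V$, where $V\to Z$ is the bundle of vanishing spheres, i.e. the vanishing cycle of $\pi$. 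Finally, gluing the identity monodromy on $E\setminus N$ to $\tau_V$ on $N$ gives a diffeomorphism that agrees with the two pieces near $\partial N$, because $\tau_V$ is the identity outside a compact part of each model fibre, and this glued map is Hamiltonian isotopic to the globally defined monodromy of $\pi$.

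The hard part will be the globalisation: arranging the local normal form and the slicewise Moser reduction to depend smoothly and symplectically on the point of $Z$, so that the fibrewise Dehn twists patch into an honest fibred Dehn twist, while tracking that every intermediate modification changes the monodromy only within its Hamiltonian isotopy class. The normally K\"ahler hypothesis is exactly the input that makes this bookkeeping possible, and checking that the interpolating forms stay fibrewise symplectic on a uniform neighbourhood of $Z$ is the main technical point.
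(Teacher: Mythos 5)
This statement is not proved in the paper at all: it is imported verbatim from Perutz (\cite{PerutzI}, Theorem 2.19) and used as a black box to justify \eqref{monodromydehn}, so there is no in-paper argument to compare yours against; the relevant comparison is with Perutz's proof, which your sketch does follow in outline (localise near $\on{Crit}(\pi)$, pass to the quadratic normal form supplied by the normally K\"ahler hypothesis, compute the model monodromy, globalise over the critical locus).

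Two points in your sketch are, however, exactly where the real work lies and are currently gaps rather than steps. First, the ``slicewise Moser'' reduction: a Moser isotopy performed slice by slice need not be compatible with $\pi$ over the disc, and what is needed is a \emph{fibered} normal-form statement -- an identification of a neighbourhood of $\on{Crit}(\pi)$ with a neighbourhood of the zero section of the normal bundle carrying the fibrewise quadratic map, under which $\pi$ itself (not just its restriction to each slice) becomes the model map and the horizontal distributions are compared; the normally K\"ahler condition is what makes such a fibered identification possible, but asserting that the slicewise corrections ``depend smoothly on $Z$'' does not by itself produce a fibre-preserving Hamiltonian isotopy of the total space. Second, and more seriously, the model computation does not directly give a Dehn twist: the symplectic monodromy of $q(z)=z_1^2+\cdots+z_k^2$ around the origin is well defined but is \emph{not} compactly supported -- it rotates the sphere bundle at infinity -- whereas the fibred Dehn twist $\tau_V$ is supported near $V$. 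The heart of Perutz's (and, in the Lefschetz case, Seidel's) argument is precisely the comparison of the naive monodromy with the compactly supported model twist built from the Hamiltonian circle action on the punctured quadric bundle, showing the discrepancy is generated by a Hamiltonian function cut off away from $V$; in the Morse--Bott setting this trimming must in addition be done consistently over $Z$, using the induced connection on the sphere bundle $V\to Z$. Your phrase ``the rescaling parameter being absorbed up to Hamiltonian isotopy'' gestures at this but does not supply the argument, and the rescaled parallel transport of Section \ref{symplstr} is a device for noncompact \emph{fibres of $\chi$}, not a substitute for this cut-off comparison in the local model.
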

Therefore using the local picture of the Lemma \ref{2nbhd} we see that if we have a subset  $B\subset \cal{Y}_m$ for which the naive (non-rescaled) parallel transport map $h_{\sig_i}|_B$ is well-defined then \bq\label{monodromydehn}h_{\sigma_i}\cong\tau_{V_i}.\eq
The reason is that since the naive parallel transport is well-defined for all points of $B$, we can shrink the rescaling parameter in \eqref{tranportvec} to zero and thereby isotope $h_{\sig_i}^{res}$ to $h_{\sig_i}$.

\begin{lemma}\label{norotate}
Let $\nu=\{\mu_1,\ldots,\mu_{2m}\}\in \mathrm{Conf}_{2m}^0$ and  $\gamma: [0,1]\rightarrow \frak{h}/W$ such that $\gamma(0)=\nu$ and as $s\rightarrow 1$, $\mu_{2i-1}$ and $\mu_{2i}$ approach each other linearly and collide at some $\mu$. 
Then $h^{res}_{\sigma_i}$ restricted to $\Sm^{sub,\mu}$ is identity. Similarly if $\mu_{2k-1},\mu_{2k},\mu_{2k+1}$ come together at some $\mu$ then $\sig_{i}$ and $\sig_{i+1}$ act trivially on $\Sm^{sub3,\mu}$.
\end{lemma}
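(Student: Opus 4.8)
The plan is to localise both statements to the model degenerations of Lemmas~\ref{2nbhd} and~\ref{3nbhd} and then reason exactly as in the derivation of \eqref{monodromydehn}. Take first the multiplicity-two case. By Lemma~\ref{2nbhd}, on a neighbourhood $U_\mu$ of $\Sm^{sub,\mu}$ the map $\chi$ is isomorphic, over a disc $D$ through the configuration $(\mu,\mu,\mu_3,\dots,\mu_{2m})$, to the fibered $A_1$-singularity $f(x,a,b,c)=a^2+b^2+c^2$ on $\Sm^{sub,\mu}\times\mathbb{C}^3$. In the coordinate on $D$, the braid $\sigma_i$ that interchanges the two eigenvalues collapsing to $\mu$ is a loop encircling the critical value $0$ once, with the remaining eigenvalues $\mu_3,\dots,\mu_{2m}$ held fixed throughout; hence in the product model its horizontal lift moves only in the $\mathbb{C}^3$-directions. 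As for \eqref{monodromydehn}, the asymptotic homogeneity \eqref{asshomog} lets one shrink the rescaling parameter of \eqref{tranportvec} to zero along this loop, so $h^{res}_{\sigma_i}$ is isotopic to the naive monodromy, which by Perutz's theorem above is Hamiltonian isotopic to the fibered Dehn twist $\tau_{V_i}$ along the vanishing cycle $V_i$ --- the $2$-sphere bundle over $\Sm^{sub,\mu}$ produced by the relative vanishing cycle construction for $f$.

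The key point is that a fibered Dehn twist is a bundle automorphism of a tubular neighbourhood of $V_i\to\Sm^{sub,\mu}$ covering the identity of the base $\Sm^{sub,\mu}$. Equivalently, and more directly: the projection of the loop $\sigma_i$ from $\mathrm{Conf}_{2m}$ to $\mathrm{Conf}_{2m-2}$ obtained by forgetting the eigenvalue $\mu$ is the constant loop, so the parallel transport maps \eqref{paralleltransD} of the bundle $\chi\colon\cal{D}_m\to\mathrm{Conf}_{2m-2}$ along it are the identity; and near $\Sm^{sub,\mu}$ the symplectic form and the fibration are of product type --- this is exactly the compatibility recorded after \eqref{paralleltransD} --- so the rescaling corrections $-\sigma Z_\beta$ and the time-$\sigma$ flow of $Z_{\beta(1)}$ act only in the $\mathbb{C}^3$-directions and fix the base pointwise. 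Hence $h^{res}_{\sigma_i}$ restricts to the identity of $\Sm^{sub,\mu}\cong\cal{D}_{m,\mu}$.

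The multiplicity-three case is identical, with Lemma~\ref{3nbhd} and Section~\ref{fibered} replacing Lemma~\ref{2nbhd}: over the parameter space $P$ of \eqref{3nbhddisk}, $\chi$ is near $\Sm^{sub3,\mu}$ the fibered $A_2$-singularity $p(a,b,c,d)=(d,\,a^3-ad+bc)$ with base $\Sm^{sub3,\mu}$, and $\sigma_i$, $\sigma_{i+1}$ are realised by half-twists of two of the three eigenvalues coinciding at $\mu$ (the third, and all others, held fixed), each an $A_1$-type monodromy of this family. Applying Perutz's theorem to these $A_1$-degenerations identifies $\sigma_i$ and $\sigma_{i+1}$ (after shrinking the rescaling) with fibered Dehn twists along vanishing cycles lying over $\Sm^{sub3,\mu}$; as before such twists cover the identity of the base --- equivalently, both loops become constant in the reduced configuration space obtained by forgetting $\mu$ --- so $\sigma_i$ and $\sigma_{i+1}$ act trivially on $\Sm^{sub3,\mu}$.

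The main obstacle, and the only step needing genuine care, is the localisation itself: one must verify that $h^{res}_{\sigma_i}$, which a priori is defined only on large compact subsets of the regular fibers $\cal{Y}_{m,t}$, has a well-defined restriction to the singular stratum $\Sm^{sub,\mu}$ that agrees with the intrinsic parallel transport \eqref{paralleltransD} of $\cal{D}_m$. Concretely this requires checking that the fiberwise critical locus of $\psi$ meets $U_\mu$ properly over $D$ and that the rescaling parameter can be driven to zero along the $\sigma_i$-loop --- the same properness and homogeneity input already used to get \eqref{monodromydehn}. Granting this, Perutz's theorem and the product form of the local model finish both parts.
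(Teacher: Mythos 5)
Your proposal is correct and follows essentially the same route as the paper, which also reduces to the local models of Lemmas \ref{2nbhd} and \ref{3nbhd} and deduces both statements from \eqref{monodromydehn} together with the structure of fibered Dehn twists (the paper phrases this as being the identity outside a small neighbourhood of the spherical fibres, you as covering the identity of the base --- the same point). Your additional remarks on the constant projected loop, the parallel transport \eqref{paralleltransD}, and the product form of the symplectic structure only spell out details the paper's one-line proof leaves implicit.
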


\begin{proof}
 We use the picture of Lemma \ref{3nbhd}. 
Both statements follow from \eqref{monodromydehn} and the fact that fibred  Dehn twists are identity outside  a small \nbhd of the spherical fibres.
\end{proof}

Let ${V_i}_x$ denote the ($S^2$) fiber of $V_i$ over $x$. We grade $\tau_{V_i}$ in such a way that \bq\label{gradedehn}\tau_{V_i} {V_i}_x={V_i}_x\{1\}\eq and the grading function vanishes outside a \nbhd of $V_i$. This grading is unique. (Lemma 5.6 in \cite{gradedlag})

\begin{remark}\label{khsremark}
 Monodromy actions of braid group on symplectic manifolds were first constructed in \cite{KhS}. Parallel transport maps $h^{res}_{\beta}$  form a homomorphism  from $\pi_1(\mathrm{Conf}_{2m})=Br_{2m}$ into the $\pi_0$ of $Sympl(\cal{Y}_m)$. In particular symplectomorphisms associated to elementary braids satisfy Artin's commutation relations. (Symplectic manifolds used in \cite{KhS} are compact with boundary. The manifolds $\cal{Y}_m$ in \cite{SS} that we use here can be obtained from them by attaching an infinite cone.)
\end{remark}




If $T_i=\cup_{j;m}$, we define a  Lagrangian   $L_{\cup_{k;m}}$, regardless of the orientation of $\cup_{k;m}$,  as follows. 
The result depends on a real parameter $R>0$.
To simplify the notation we set $k=2j-1, l=2j$.
%
 
With $\nu_i$ as given above let $\nu=\nu_i=\{\mu_1,\ldots, \mu_{2m}\}$.
 Let $\gamma$ be a curve in $\mathrm{Conf}^0_{2m}$ such that $\gamma(0)=\nu_i$ and as $s\rightarrow 1$, $\mu_{k}$ and $\mu_{l}$ approach each other linearly and collide at a point $\mu'$.  For example we can take $$\gamma(t)=\{\mu_1,\ldots,\mu_k+t(\mu_l-\mu_k)/2,\ldots,\mu_l-t (\mu_l-\mu_k)/2 ,\ldots,\mu_{2m} \}$$  provided that $\mu_k+t(\mu_l-\mu_k)/2$ does not intersect the other $\mu_i$. Set $\nu^{k,l}=\nu \backslash \{\mu_{k},\mu_{l}  \}$, $\nu'=\gamma(1)$.  
  We use Lemma \ref{2nbhd} to identify a \nbhd of $\Sm^{sub,\mu'}$ in $\Sm$ locally with $\Sm^{sub,\mu'} \times \mathbb{C}^3$. This induces a Kahler form and hence a metric on $\Sm^{sub,\mu'}\times \mathbb{C}^3$. We perturb the complex structure outside a compact ball of radius $\rho$ (to be chosen below) so that outside that set the resulting metric equals the product  metric. Now we use the relative vanishing cycle construction for the whole $\Sm^{sub,\mu'}$.  It  yields (after restriction) a sphere bundle $V=V_{\gamma(1-s)}(\Sm^{sub,\mu'}) \subset \mathcal{Y}_{m,\gamma(1-s)}$ for small $s$  with projection $\pi:V\rightarrow \mathcal{Y}_{m,\nu'}\cap O^{sub,\mu'_k}$. This construction works because the metric equals the product metric outside a compact set.

We denote the image of $V$ under  parallel transport map along $-\gamma$, i.e.  $  h^{-1}_{\gamma|_{[0,1-s]}}(V)\subset\mathcal{Y}_{m,\nu}$, by the same notation $V$.
 Composing $\pi$ with the  parallel transport map $h^{-1}_{\gamma|_{[0,1-s]}}$ we obtain a projection $\pi:V\rightarrow  \mathcal{Y}_{m,\nu'}\cap O^{sub,\mu'}$  which is a $S^2$ bundle. By Lemma \ref{2nbhd}, $ \mathcal{Y}_{m,\nu'}\cap O^{sub,\mu'}$ can be identified with $\cal{D}_{m-1,\nu'}$  from (\ref{Ddef}). Let $\delta$ be a geodesic in $\mathrm{Conf}^0_{2m}$  
 joining $\nu'$ to $\nu^{k,l}$.
 We can use parallel transport map \eqref{paralleltransD} along the curve $\delta$ to map  $\cal{D}_{m-1,\nu'}$ to $\cal{D}_{m-1,\nu^{k,l}\cup \{0,0\}}$. The latter can be identified with $\mathcal{Y}_{m-1,\nu^{k,l}}$. So we obtain a fibration $\pi:V\rightarrow \mathcal{Y}_{m-1,\nu^{k,l}}$. We can use this map $\pi$ to pull $V$ back to $\mathcal{Y}_{m-1,\nu^{k,l}}\times \mathcal{Y}_{m-1,\nu^{k,l}} $. Let ${\cup_{j;m}}$ be its restriction to the diagonal. It  is a Lagrangian submanifold of $\mathcal{Y}_{m,\nu_i}^- \times \mathcal{Y}_{m-1,\nu_i^{k,l}}=\mathcal{Y}_{m,\nu_i}^-\times\cal{Y}_{m-1,\nu_{i+1}}$.
 Let $\psi=\psi_1+\psi_2$ be the plurisubharmonic function on $\mathcal{Y}_{m,\nu_i}^-\times\cal{Y}_{m-1,\nu_{i+1}}$. We can choose $\rho$ in such a way that the inverse image of $\psi=R$ lies inside the ball of radius $\rho$.
We have a projection $\pi: L_{\cup_{j;m}}\rightarrow \Delta\subset \mathcal{Y}_{m-1,\nu_{i+1}}^-\times\cal{Y}_{m-1,\nu_{i+1}}$.\\

  As in the case of Lagrangians from crossingless matchings, 
   replacing the curve $\gamma$  with another curve in the same homotopy class (with fixed endpoints) results in a new $L{\cup_{j;m}}$ which is Lagrangian isotopic to the former one. Sine the first homology group of this Lagrangian is zero, this isotopy is exact. 

    We grade $L_{\cup_{i;m}}$ as follows. Lemma \ref{lag_commut} below tells us that fibers of $L_{\cup_i}$ and $L_{\cup_{i+1}}$ over each point of the diagonal intersect transversely at only one point.  We choose the grading in such a way that the absolute Maslov index of this intersection point (with regard to the two $S^2$ fibres) equals one.
    We can use Lemma \ref{2nbhd} and isotope $L_{\cup_i}$ to $\Delta \times \sqrt{z} S^2$ for some small $z\in \mathbb{C}$. One can see by direct computation that $\alpha_{L_C}=\alpha_{\Delta}\cdot \frac{z}{|z|}$.  The function $\alpha_{\Delta}$ is  constant. This means that
    the grading on $L_{\cap_i}$ is determined by the choice of a branch of $\arg(z)$. In particular such a grading is a constant function:
    \bq\label{gradingforcup} \tilde{L}_{\cup_i}=c_i. \eq
    We choose this $c_i$ to be the same for every $i$. This together with the formula \eqref{degdef2} (for $n=2$) imply that the absolute Maslov index of each fiberwise intersection point in $L_{\cup_i}\cap L_{\cup_{i+1}}$ equals $1$.
  Construction for $\cap_{k;m}$ is similar.\\


  The following lemma insures  that the Lagrangian we assign to crossingless matching agrees with that of Seidel and Smith.

\begin{lemma}\label{crosslesstocup}
If $C\in \pz{C}_m$ is a crossingless matching and arcs of $C$ are isotopic to $T_{i_1,j_1},\ldots, T_{i_m,j_m}$ where each $T$ is a cap or a cup then $L_C$ is isotopic to
$L_{T_{i_1,j_1}}\circ \cdots\circ L_{T_{i_m,j_m}}$.
\end{lemma}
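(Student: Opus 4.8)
The plan is to set up an induction on $m$ (the number of arcs), running in parallel with the inductive construction of $L_C$ given in section~\ref{crossless}. Fix a crossingless matching $C\in\P_m$ and a presentation of its arcs as elementary caps/cups $T_{i_1,j_1},\dots,T_{i_m,j_m}$. The base case $m=1$ is immediate: both sides are the relative vanishing cycle for $\chi:\mathcal{S}_1\to\mathbb{C}$ carried back by reverse parallel transport along a collision path, so they agree by construction. For the inductive step, single out one arc of $C$ that can be isotoped (rel endpoints, inside $\mathbb{C}\setminus\mu$) to an innermost ``cup'' $\cup_{j;m}$ adjacent to the relevant pair of points; this is exactly the arc $\delta_1$ whose deletion produces $\bar C\in\P_{m-1}$ in the recursive definition of $L_C$. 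By the inductive hypothesis, $L_{\bar C}$ is (exact-)isotopic to the composition $L_{T_{i_2,j_2}}\circ\cdots\circ L_{T_{i_m,j_m}}$ inside $\mathcal{Y}_{m-1,\bar\mu}$.

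The heart of the argument is then to identify $L_C$, as constructed in section~\ref{crossless} by parallel-transporting $L_{\bar C}$ into the singular locus $\mathcal{D}_{m,\gamma(1)}$, applying the relative vanishing cycle construction of Lemma~\ref{2nbhd}, and transporting back along $\gamma$, with the composition $L_{\cup_{j;m}}\circ L_{\bar C}$, where $L_{\cup_{j;m}}$ is the correspondence built in section~\ref{func}. The point is that both constructions use the \emph{same} ingredients: the identification of $\mathcal{Y}_{m-1,\nu^{k,l}}$ with $\mathcal{D}_{m,\tau}$ from Lemma~\ref{2dkernel}, parallel transport \eqref{paralleltransD} along the geodesic $\delta$ joining $\nu'$ to $\nu^{k,l}$, and the relative vanishing cycle for the fibered $A_1$-singularity of Lemma~\ref{2nbhd} with $f(x,a,b,c)=a^2+b^2+c^2$. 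Concretely, $L_{\cup_{j;m}}$ is by definition $V$ pulled back along its $S^2$-fibration $\pi:V\to\mathcal{Y}_{m-1,\nu^{k,l}}$ and restricted to the diagonal; composing this correspondence with $L_{\bar C}\subset\mathcal{Y}_{m-1,\bar\mu}$ amounts to taking the preimage $\pi^{-1}(L_{\bar C})$, i.e.\ the $S^2$-bundle over $L_{\bar C}$ obtained by the vanishing cycle construction — which is precisely the description of $L_C$ as ``topologically a trivial sphere bundle on $L_{\bar C}$'' given at the end of section~\ref{crossless}. One should check that the composition $L_{\cup_{j;m}}\circ L_{\bar C}$ is embedded (the projection $\pi_{0,2}$ is an embedding because $L_{\bar C}$ sits in the base of the honest fibration $\pi$), so that Lemma~\ref{crosslesstocup}'s statement about a single Lagrangian (rather than a generalized correspondence) makes sense; this uses properness of the $S^2$-bundle.

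The remaining points are bookkeeping: one must verify that the choices of collision paths $\gamma$ and geodesics $\delta$ used in the two constructions can be taken to be homotopic (rel endpoints, in $\mathbb{C}\setminus\mu$), so that the resulting Lagrangians differ only by an exact Lagrangian isotopy — this is the standard ``different choices of $\gamma$ give Hamiltonian isotopic Lagrangians'' fact already invoked twice in the excerpt, and exactness is automatic since $H_1$ of these sphere-bundle Lagrangians vanishes. One must also confirm that iterating this — peeling off one arc at a time — reconstructs the full composition $L_{T_{i_1,j_1}}\circ\cdots\circ L_{T_{i_m,j_m}}$ regardless of the order in which arcs are peeled, which follows from associativity of composition of correspondences together with the isotopy-invariance under changing the presentation (an application of Lemma~\ref{tangledecomp}/Yetter's relations, or more directly from Theorem~\ref{wellfunc} once it is available, though to avoid circularity one should argue directly here). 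I expect the main obstacle to be the careful matching of the two metrics/perturbations near the singular stratum: section~\ref{crossless} uses the ambient Kähler metric on $\mathcal{S}_m$, whereas $L_{\cup_{j;m}}$ is built after perturbing the complex structure outside a ball of radius $\rho$ so that the metric becomes a product — one needs the remark following the relative vanishing cycle construction in section~\ref{relvanish} (that the perturbed vanishing cycle equals the unperturbed one outside a compact set, hence is exact-isotopic to it) to bridge this gap, and to track that the grading normalizations \eqref{gradingforcup} and \eqref{gradedehn} are compatible with the grading $L_C$ inherits from Seidel--Smith.
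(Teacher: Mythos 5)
Your proposal is correct and follows essentially the same route as the paper: induction on $m$, identifying the composition with the cup correspondence (i.e.\ the pullback $\pi^{-1}(L_{\bar C})$ of the $S^2$-bundle $V$) with the inductive step of the Seidel--Smith construction of $L_C$ from section~\ref{crossless}, the only difference being the base of the fibration. The paper's proof is just this observation stated in two sentences; your additional care about embeddedness, choice of paths, and the metric perturbation fleshes out details the paper leaves implicit but does not change the argument.
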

\begin{proof}
We use induction on $m$. The base case is vacuous. For the induction step we note that our construction is the same as the induction step in the construction of Seidel and Smith (section \ref{crossless} above) except for the base of the fibration.
\end{proof}

In order for $\Phi$ to define a functor, we must verify that the above correspondences satisfy the same commutation relations as the tangles they are associated to. 
 First we have the following 
 cf. remark \ref{khsremark}.

\begin{lemma}\label{seidelth}
We have $L_{\sigma_i}L_{\sigma_{i+1}}L_{\sigma_i}=L_{\sigma_{i+1}}L_{\sigma_i}L_{\sigma_{i+1}}$ and if $|i-j|>1$, $L_{\sigma_i}L_{\sigma_j}=L_{\sigma_j}L_{\sigma_i}.$\\
\end{lemma}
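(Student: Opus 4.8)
The statement is that the Lagrangian correspondences attached to elementary braids satisfy the braid (Artin) relations. The plan is to reduce everything to the corresponding statement about the rescaled parallel transport symplectomorphisms $h^{res}_{\sigma_i}$, which is exactly Remark \ref{khsremark}: the assignment $\sigma_i \mapsto h^{res}_{\sigma_i}$ is (up to Hamiltonian isotopy, hence in $\pi_0(\mathrm{Symp})$) a homomorphism from $Br_{2m} = \pi_1(\mathrm{Conf}_{2m})$, because the $h^{res}_\beta$ are the monodromy of the fibration $\chi|_{\mathrm{Conf}^0_{2m}}$ and monodromy is functorial in the loop (concatenation of paths goes to composition of parallel transports, and homotopic paths give Hamiltonian-isotopic maps, by the standard argument of Seidel--Smith, \cite{SS} Section 5C). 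Concretely: $\sigma_i\sigma_{i+1}\sigma_i$ and $\sigma_{i+1}\sigma_i\sigma_{i+1}$ are the same element of $Br_{2m}$, and $\sigma_i\sigma_j = \sigma_j\sigma_i$ in $Br_{2m}$ when $|i-j|>1$; therefore $h^{res}_{\sigma_i}\circ h^{res}_{\sigma_{i+1}}\circ h^{res}_{\sigma_i}$ is Hamiltonian isotopic to $h^{res}_{\sigma_{i+1}}\circ h^{res}_{\sigma_i}\circ h^{res}_{\sigma_{i+1}}$, and similarly for the far-commutativity.

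The second step is to pass from symplectomorphisms to their graphs. I would use the elementary fact that for symplectomorphisms $f,g$ of $M$ one has $\mathrm{graph}(f)\circ\mathrm{graph}(g) = \mathrm{graph}(f\circ g)$ as Lagrangian correspondences in $M^-\times M$ (with the conventions of section \ref{tsfts}; the composition is automatically embedded since both are graphs), and that $\mathrm{graph}(f)$ is Hamiltonian isotopic to $\mathrm{graph}(f')$ in $M^-\times M$ whenever $f$ and $f'$ are Hamiltonian isotopic in $M$ — if $f_t$ is the isotopy with $f_0=f$, $f_1=f'$, then $\mathrm{graph}(f_t)$ is a Lagrangian isotopy, and one checks it is exact, hence (by the lemma on exact isotopies through $\kappa$-compatible Lagrangians, or directly on graphs) Hamiltonian. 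Hence $L_{\sigma_i}L_{\sigma_{i+1}}L_{\sigma_i} = \mathrm{graph}(h^{res}_{\sigma_i}\circ h^{res}_{\sigma_{i+1}}\circ h^{res}_{\sigma_i})$ and the same for the other side, and these two graphs are Hamiltonian isotopic by Step 1; the far-commutativity relation is identical. Since the equivalence relation on morphisms in the noncompact symplectic category is generated by (exact) Hamiltonian isotopy and embedded composition, this gives the desired equalities of morphisms.

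**Main obstacle.** The analytic subtlety is that the $h^{res}_{\sigma_i}$ are not globally defined symplectomorphisms of $\mathcal{Y}_m$ but only defined on arbitrarily large compact sets, and the passage "homotopic paths $\Rightarrow$ Hamiltonian-isotopic monodromy" and "$\mathrm{graph}(f_t)$ exact" must be made compatible with the conical-end/$\mathfrak{C}_c$ machinery of section \ref{laginstein} so that the isotopies are through \emph{allowable} Lagrangians with the \emph{conical} Hamiltonians required by the equivalence relation in the noncompact symplectic category. The right way to handle this is to invoke the asymptotic homogeneity \eqref{asshomog} and the rescaling construction \eqref{tranportvec}, which is precisely what Seidel and Smith set up so that $\beta\mapsto h^{res}_\beta$ is a well-defined homomorphism into $\pi_0$ of the symplectomorphism group respecting the conical structure at infinity; then everything downstairs (graphs, exactness of the isotopies of graphs, conicality) follows because $\mathcal{Y}_m^-\times\mathcal{Y}_m$ is Stein with product plurisubharmonic function and the graph of a conical symplectomorphism has a conical end. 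I would state this reduction cleanly, cite \cite{SS} Section 5C and Remark \ref{khsremark} for the braid relations at the level of $\pi_0(\mathrm{Symp})$, and cite section \ref{laginstein} for the translation to morphisms in the noncompact symplectic category, leaving the routine verifications of the two "elementary facts" about graphs to the reader.
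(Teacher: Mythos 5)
Your proposal is correct and follows essentially the same route as the paper, which gives no separate argument for Lemma \ref{seidelth} beyond citing Remark \ref{khsremark}: the rescaled parallel transport maps define a homomorphism $Br_{2m}=\pi_1(\mathrm{Conf}_{2m})\to\pi_0(\on{Symp}(\cal{Y}_m))$, so the $h^{res}_{\sigma_i}$ satisfy the Artin relations up to Hamiltonian isotopy, and the correspondences $L_{\sigma_i}=\on{graph}(h^{res}_{\sigma_i})$ inherit them. Your additional spelling-out of the graph-composition identity, the exactness of the isotopy of graphs, and the compatibility with the conical-end/$\mathfrak{C}_c$ setup is exactly the routine verification the paper leaves implicit.
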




\begin{lemma}\label{rotate_lag}
We have
\bq L_{\cap_{i;m}}L_{\sigma_{j}}\simeq L_{\sigma_{j-2}}L_{\cap_{i;m}}  \qquad L_{\cup_{i;m}}L_{\sigma_{j}}\simeq L_{\sigma_{j+2}}L_{\cup_{i;m}} \eq
if $|i-j|>1$ and for any $i$ we have:
 \begin{eqnarray}
\label{2tolast}L_{\cap_{i;m}}L_{\sigma_{i}}\simeq L_{\cap_{i;m}}\{ 1\}   &\qquad& L_{\sigma_{i}}L_{\cup_{i;m}}\simeq L_{\cup_{i;m}}\{ 1\} \\
\label{last}L_{\sigma_{i\phantom{;m}}} L_{\cup_{i+1}}\simeq L_{\sigma_{i+1}}^t L_{\cup_i }  &\qquad& L_{\sigma_{i+1}}L_{ \cup_{i}}\simeq L_{\sigma_{i}}^{t}L_{\cup_{i+1}}
\end{eqnarray}
where ``$\simeq$'' means exact isotopy.
\end{lemma}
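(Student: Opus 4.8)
The plan is to establish all of these relations by reducing to the local models of Lemma \ref{2nbhd} and Lemma \ref{3nbhd} and invoking the identification \eqref{monodromydehn} of the braid monodromy $h^{res}_{\sigma_i}$ with a fibred Dehn twist $\tau_{V_i}$. Exactness of the isotopies will be automatic throughout: the Lagrangians in play are iterated sphere bundles over the manifolds $\mathcal{Y}_\bullet$, whose first homology vanishes, so every Lagrangian isotopy between them is exact and, by the remarks in Section \ref{laginstein}, passes to the associated Lagrangians with conical ends. The far-commutation relations (the case $|i-j|>1$) are the easiest: there the collision of eigenvalues defining $L_{\cap_{i;m}}$ (resp.\ $L_{\cup_{i;m}}$) and the collision governing $\tau_{V_j}$ take place over disjoint sets of strands, so, using Lemma \ref{2dkernel} and the product form of the symplectic structure, one may choose the collision curve $\gamma$ for the cup/cap and the braiding path for $\sigma_j$ supported in disjoint coordinate directions; then the relative vanishing cycle construction for one collision and the parallel transport for the other act on independent variables and commute. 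The relabelling $\sigma_j\leftrightarrow\sigma_{j\mp 2}$ is exactly the bookkeeping recording the deletion (resp.\ insertion) of the two capped-off points, and with this relabelling the two composite correspondences are literally equal before taking $\mathfrak{C}_c$, hence exactly isotopic after.

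For the relations \eqref{2tolast} the key observation is that $L_{\cup_{i;m}}$ is, by its construction in Section \ref{func}, a sphere bundle over the diagonal obtained by pulling back along $\pi$ the same relative vanishing cycle $V_i\subset\mathcal{Y}_m$ (for the collision $\{\mu_{2i-1},\mu_{2i}\}$) that governs $h^{res}_{\sigma_i}$. Since $L_{\sigma_i}$ is the graph of $h^{res}_{\sigma_i}$, the composite $L_{\sigma_i}L_{\cup_{i;m}}$ is the image of $L_{\cup_{i;m}}$ under $h^{res}_{\sigma_i}\times\mathrm{id}$ on the $\mathcal{Y}_m$-factor. On a neighbourhood of the compact set $V_i$ the naive parallel transport is well defined, so by \eqref{monodromydehn} we may replace $h^{res}_{\sigma_i}$ by $\tau_{V_i}$ there; a fibred Dehn twist preserves its vanishing cycle as a set, so $(\tau_{V_i}\times\mathrm{id})(L_{\cup_{i;m}})=L_{\cup_{i;m}}$ as a Lagrangian, and by the grading normalisation \eqref{gradedehn} it raises the grading of each spherical fibre by one, which in view of \eqref{gradingforcup} is precisely the global shift $L_{\cup_{i;m}}\{1\}$. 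The companion relation $L_{\cap_{i;m}}L_{\sigma_i}\simeq L_{\cap_{i;m}}\{1\}$ is proved identically, using that $L_{\cap_{i;m}}$ is likewise a sphere bundle over the diagonal pulled back from the same $V_i$.

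The sliding relations \eqref{last} involve three consecutive eigenvalues coming together, so the relevant local model is the fibred $A_2$ singularity of Lemma \ref{3nbhd} and Section \ref{fibered}. By Lemma \ref{norotate} the braids $\sigma_i$ and $\sigma_{i+1}$ act trivially on the base $\Sm^{sub3,\mu}$, so, exactly as in the proof of Lemma \ref{2lagrang}, the computation reduces to the model fibre $\mathrm{Y}\cong\mathbb{C}^4$ with the projection $q_{d,z}$ to the $a$-plane. There $L_{\cup_i}$ and $L_{\cup_{i+1}}$ are the Lagrangians $\Lambda_{\mathbf{c}}$ and $\Lambda_{\mathbf{c}'}$ (or $\Lambda_{\mathbf{c}''}$) built from the matching arcs of Figure \ref{curvesalphabeta} via \eqref{Lambd}, and the braids $\sigma_i$ and $\sigma_{i+1}^t$ act by the two half-twists that exchange the relevant pair of critical values of $q_{d,z}$. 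Applying \eqref{monodromydehn} once more to the single-collision sub-fibrations of the $A_2$ picture shows that such a half-twist carries $\mathbf{c}'$ to $\mathbf{c}$ up to isotopy rel the critical values, hence $\Lambda_{\mathbf{c}'}$ to $\Lambda_{\mathbf{c}}$ up to exact isotopy; bookkeeping the grading with \eqref{gradedehn} and the normalisation \eqref{gradingforcup} shows that no net shift survives, which gives $L_{\sigma_i}L_{\cup_{i+1}}\simeq L_{\sigma_{i+1}}^tL_{\cup_i}$, and symmetrically the second relation.

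I expect the third group of relations to be the main obstacle. Everything else is essentially bookkeeping on top of the already-established dictionary ``braid $=$ fibred Dehn twist'' and ``cup/cap $=$ relative vanishing cycle''; but identifying inside the fibred $A_2$ model precisely which matching arc the braid action produces, and verifying that the induced grading matches with no spurious shift, requires careful attention to orientation conventions and to the compatibility between the two constructions of the Lagrangian furnished by Lemma \ref{2lagrang}. Pinning down the orientation conventions for the elementary braids so that \eqref{2tolast} yields the shift $\{1\}$ (rather than $\{-1\}$) and \eqref{last} yields no shift is the one genuinely delicate point.
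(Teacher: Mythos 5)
Your treatment of the far-commutation relations and of \eqref{2tolast} is sound and is essentially the paper's own argument: the paper phrases the ungraded part by observing that composing with $L_{\sigma_k}$ amounts to replacing the collision path $\gamma$ in the construction of $L_{\cap_k}$ by $\sigma_k*\gamma$, which collides the same pair of eigenvalues and so reproduces the same Lagrangian, and it then gets the shift $\{1\}$ from the Dehn-twist grading normalisation \eqref{gradedehn} --- the same content as your fibrewise statement $\tau_{V_i}(V_{i})=V_i$.

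The gap is in \eqref{last}. You correctly reduce, via Lemma \ref{3nbhd}, Lemma \ref{norotate} and Lemma \ref{2lagrang}, to the fibred $A_2$ model in which $L_{\cup_i}$, $L_{\cup_{i+1}}$ become the Lagrangians over the matching arcs of Figure \ref{curvesalphabeta} and $h_{\sigma_i}$, $h_{\sigma_{i+1}}$ become the fibred Dehn twists along them; but from there you only assert that ``a half-twist carries $\mathbf{c}'$ to $\mathbf{c}$'' and that ``bookkeeping shows no net shift survives,'' and you yourself flag the graded verification as the delicate point you have not pinned down. That verification is the actual content of the relation: ungraded, what is needed is not that a twist carries one given arc to the other, but that the two differently twisted Lagrangians $\tau_{\Lambda_{\mathbf{c}}}(\Lambda_{\mathbf{c}'})$ and $\tau_{\Lambda_{\mathbf{c}'}}^{-1}(\Lambda_{\mathbf{c}})$ agree up to exact isotopy; graded, the absence of a shift does not follow formally from \eqref{gradedehn}, which by itself produces shifts (that is exactly what happens in \eqref{2tolast}). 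The paper closes this point by invoking Lemma 5.8 of \cite{gradedlag}: for graded Lagrangian spheres $L_0,L_1$ meeting transversely in a single point of Maslov index one, with the Dehn twists graded as in \eqref{gradedehn}, one has $\tau_{L_0}L_1=\tau_{L_1}^{-1}L_0$ as graded Lagrangians. The normalisation \eqref{gradingforcup} is chosen precisely so that the fibrewise intersection point of $L_{\cup_i}$ and $L_{\cup_{i+1}}$ has Maslov index one, so the hypotheses hold fibrewise over the diagonal (on which the twists act trivially by Lemma \ref{norotate}), and \eqref{last} follows with no residual orientation bookkeeping. Without this input, or an equivalent explicit computation of the graded half-twist action in the $A_2$ Milnor fibre, your argument for \eqref{last} is incomplete.
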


\begin{proof}

Let $\beta:[0,1]\rightarrow \mathrm{Conf}^0_{2m}$ be a braid such that $\beta(0)=\beta(1)=\nu$. Note that in the construction of $L_\cap$   if we replace the curve $\gamma$ with
$\beta*\gamma$ and also replace $\delta $  with $\delta*\alpha$ where $\alpha$ joins $\nu^{(k)}$ to $\nu\backslash \{\beta(2k-1),\beta(2k) \}$, the construction will yield $h_\beta (\cap_{k;m})$. In general the basepoint $\nu^{(k)}$ or $\nu\backslash \{\beta(2k-1),\beta(2k) \}$ is of no importance so we can assume $\alpha$ to be constant.
If $\beta$ equals $\sigma_{k}$  then $\beta*\gamma$ joins $\mu_{2k-1}, \mu_{2k}$  and fixes the other eigenvalues so the construction will yield the same $L_{\cap_{k}}$. 
 If  $\beta=\sigma_{k+1}$ then $\beta*\gamma$ the result will be the same as starting with $\cap_{i+1}$ and using $\beta=\sigma_k$.  These two facts imply the above isotopies ignoring grading. As another proof which  shows equality of graded Lagrangians,
we appeal to \eqref{gradedehn} which implies \eqref{2tolast}. To prove \eqref{last}, we use Lemma 5.8 in \cite{gradedlag} which asserts if $L_0, L_1$ are two graded Lagrangian spheres whose intersection consists of only one point of Maslov index one and with the grading of the Dehn twists and chosen as above, we have
\bq\label{dehntwistequ}  \tau_{L_0}L_1 =\tau_{L_1}^{-1} L_0\eq as graded Lagrangians. Now we use Lemma \ref{3nbhd} to translate the picture into that of section \ref{fibered}. So we can identify $L_{\cup_{i}}$ with $\Lambda_{\alpha_1}\times_{S^1}\Delta$ and $L_{\cup_{i+1}}$ with $\Lambda_{\alpha_2}\times_{S^1}\Delta$. Because $h_{\sigma_i}$ and $h_{\sigma_{i+1}}$ act trivially on $\Delta$ by \ref{norotate}, we can identify them with Dehn twists around $\Lambda_{\alpha_1}$ and $\Lambda_{\alpha_2}$ respectively. Our choice of grading \eqref{gradingforcup} for $L_{\cup_i}$ implies that the hypothesis of  \eqref{dehntwistequ} are met. This immediately implies \eqref{last}. Note that because $\Lambda_{\alpha_i}$  are two dimensional, changing the order of them does not change the Maslov index of the intersection point.
\end{proof}



\begin{lemma}\label{lag_commut}
We have the following commutation relations where  ``$\simeq$'' means exact isotopy.

\begin{eqnarray}
\label{thesecondtolast}L_{\cap_{i,m}}\hspace{1ex} L_{\cup_{j,m\phantom{+1}}}&\simeq& L_{\cup_{j-2,m-2}}\hspace{1ex} L_{\cap_{i,m-2}} \quad \text{ if}  \quad|i-j|>1.\\
\label{thelast}L_{\cap_{i,m}} \hspace{1ex} L_{\cup_{i+1,m}} &\simeq& L_{id_{m-1}} \{-1\} \qquad \text{ for any } i.
\end{eqnarray}
\end{lemma}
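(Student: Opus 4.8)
The plan is to treat the two relations of Lemma~\ref{lag_commut} separately. Relation~\eqref{thesecondtolast} is ``local'' and follows from the product structure already recorded in the Remark after Lemma~\ref{2nbhd}; relation~\eqref{thelast} is the substantive one and is proved by pushing the whole picture into the fibered $A_2$ model of Section~\ref{fibered}, in the same spirit as the proof of Lemma~\ref{rotate_lag}.

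For~\eqref{thesecondtolast}: when $|i-j|>1$ the two eigenvalue pairs made to collide in the constructions of $L_{\cap_{i,m}}$ and of $L_{\cup_{j,m}}$ are disjoint, so by equation~\eqref{22nbhd} a neighborhood of the relevant $\Sm^{sub,\mu_1}\cap\Sm^{sub,\mu_2}$ in $\Sm$ splits as $(\text{base})\times\mathbb{C}^3\times\mathbb{C}^3$, on which $\chi$ is $f_1+f_2$ with $f_1,f_2$ the two copies of $a^2+b^2+c^2$. The relative vanishing cycle construction producing $L_{\cap_{i,m}}$ then lives entirely in the first $\mathbb{C}^3$ factor and the one producing $L_{\cup_{j,m}}$ in the second, so the two sphere-bundle constructions commute, and the two composite correspondences agree up to the exact isotopies coming from the freedom in the auxiliary curves $\gamma,\delta$ used in the cap/cup constructions (these isotopies are exact since the relevant first homology vanishes, as noted in Section~\ref{func}). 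All cap and cup gradings are constant functions by~\eqref{gradingforcup} and the composition grading~\eqref{composegrade} merely adds these constants, so the identification is grading-preserving and no shift appears; the index change $j\mapsto j-2$, $m\mapsto m-2$ on the right is the relabeling forced by deleting the two strands joined by $\cap_i$.

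For~\eqref{thelast}: first, exactly as in the proof of Lemma~\ref{rotate_lag}, I would use Lemma~\ref{3nbhd} together with Lemma~\ref{norotate} and Lemma~\ref{2lagrang} (and the isotopy invariance of $L_{\cup}$ under change of the auxiliary curve $\gamma$) to identify, up to exact isotopy and over a common base $X\cong\mathcal{Y}_{m-1}$ sitting inside $\Sm^{sub3}$, the correspondences $L_{\cup_i}$ and $L_{\cup_{i+1}}$ with $\Lambda_{\mathbf{c}}\times_{S^1}\Delta$ and $\Lambda_{\mathbf{c}'}\times_{S^1}\Delta$ respectively, where $\mathbf{c},\mathbf{c}'$ are as in Figure~\ref{curvesalphabeta} and $\Lambda_{\mathbf{c}},\Lambda_{\mathbf{c}'}\subset\mathrm{Y}_{d,z}$ are the corresponding matching cycles in the $A_2$ Milnor fibre. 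Since $\mathbf{c}$ and $\mathbf{c}'$ share exactly one of the three critical values of $q_{d,z}$ and are otherwise disjoint, $\Lambda_{\mathbf{c}}\cap\Lambda_{\mathbf{c}'}$ is a single transverse point $w_0$; this is precisely the transversality statement invoked in the paragraph preceding the lemma to normalise the gradings of the cups, and it proves the first assertion of~\eqref{thelast}. Now, computing the composition $L_{\cap_i}\circ L_{\cup_{i+1}}$ fibrewise over $X$: a point of the composition is specified by an element $w$ of the fibre $\mathrm{Y}_{d,z}$ over some $x\in X$ that lies both in $\Lambda_{\mathbf{c}'}$ (the fibre of $L_{\cup_{i+1}}$ over $x$) and in $\Lambda_{\mathbf{c}}$ (the fibre of $L_{\cap_i}$ over $x$, $L_{\cap_i}$ being a transpose of $L_{\cup_i}$); hence $w=w_0$ and the composition is the diagonal $\Delta_X=L_{id_{m-1}}$. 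Fibrewise transversality of the intersection $\Lambda_{\mathbf{c}}\cap\Lambda_{\mathbf{c}'}$, together with the common base structure, gives the transversality and the embedding required in the definition of embedded composition. Finally, for the grading: the constant $c$ of~\eqref{gradingforcup} (chosen the same for all $i$) is fixed so that $w_0$ has Maslov index $1$ via~\eqref{degdef2} with $n=2$; substituting the resulting constants into~\eqref{composegrade} at the diagonal point and comparing with the canonical grading $\tilde{\Delta}\equiv0$ of $L_{id_{m-1}}$ then produces exactly the shift $\{-1\}$.

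The step I expect to be the main obstacle is this first step of~\eqref{thelast}: placing \emph{both} $L_{\cup_i}$ and $L_{\cup_{i+1}}$ into the same fibered $A_2$ model over a common base. This requires checking that the parallel transports occurring in the two (a priori different) cup constructions act trivially on the relevant $\Sm^{sub3,\mu}$ --- which is the content of Lemma~\ref{norotate} --- and that the scale parameter $R$ and the radius $\rho$ entering the two constructions can be chosen compatibly; once this normal form is in place, the composition computation is essentially formal. The other delicate point, just as in the proof of Lemma~\ref{rotate_lag}, is the grading bookkeeping --- reconciling~\eqref{composegrade}, \eqref{neggrade}, the normalisation of the fibrewise Maslov index, and the canonical grading of the diagonal at once --- where the sign of the final shift is easy to get wrong.
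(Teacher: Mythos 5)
Your proposal is correct and follows essentially the same route as the paper: relation \eqref{thesecondtolast} via the $\mathbb{C}^3\times\mathbb{C}^3$ product structure of \eqref{22nbhd} (so the two vanishing-cycle constructions/projections commute), and relation \eqref{thelast} by passing to the fibered $A_2$ model of Lemma \ref{3nbhd}, using Lemmas \ref{norotate} and \ref{2lagrang} to identify the fibers with the matching cycles $\Lambda_{\mathbf{c}}$, $\Lambda_{\mathbf{c}'}$, observing they meet in a single transverse point so the composition is exact-isotopic to the diagonal, and then doing the same grading bookkeeping with \eqref{gradingforcup}, \eqref{neggrade} and \eqref{composegrade} to get the shift $\{-1\}$. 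The only cosmetic difference is that the paper reaches the second matching cycle by first invoking Lemma \ref{rotate_lag} to write $L_{\cup_{i+1}}$ as the image of $L_{\cup_i}$ under $h_{\sigma_i\sigma_{i+1}}$, whereas you place both cups directly in the common local model.
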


\begin{proof}

To prove \eqref{thesecondtolast} let $\nu=\{\mu_1,...,\mu_{2m}\}$ and $\nu', \nu'', \nu'''$  be $\nu$ minus $\{\mu_{2i-1},\mu_{2i}\}$, $\{\mu_{2j-1},\mu_{2j}\}$,
   $\{\mu_{2i-1},\mu_{2i}, \mu_{2j-1},\mu_{2j}\}$ respectively. Therefore
\begin{eqnarray*}
L_{\cap_{i,k}}\subset\cal{Y}_{m-1,\nu'}\times \cal{Y}_{m,\nu\phantom{-2'''}} &\text{and}& L_{\cup_{j,m\phantom{-2}}}\subset \cal{Y}_{m,\nu}\times \cal{Y}_{m-1,\nu''}.\\
L_{\cup_{j-2,k-2}}\subset\cal{Y}_{m-1,\nu'}\times \cal{Y}_{m-2,\nu'''} &\text{and}& L_{\cup_{i,m-2}}\subset \cal{Y}_{m-2,\nu'''}\times \cal{Y}_{m-1,\nu''}.\\
\end{eqnarray*}
We have projections $\pi_i:L_{\cap_{i,m}}\rightarrow  \cal{Y}_{m-1,\nu'}$, $\pi_j: L_{\cup_{j,m}}\rightarrow  \cal{Y}_{m-1,\nu''}$ given in the construction of the these Lagrangians. We have\\
\begin{eqnarray*}
L_{\cap_{i,m\phantom{-2}}}L_{\cup_{j,m\phantom{-2}}}&=&\{(m_1,m_2)| \exists m , (m_1,m)\in L_{\cap_{i,k}}, (m,m_2)\in L_{\cup_{j,k}}  \}=\\
&=&\{ (m_1,m_2)| \exists m, m_1=\pi_i(m), m_2=\pi_j(m) \}.\\
L_{\cup_{j-2,m-2}} L_{\cap_{i,m-2}}&=&\{(m_1,m_2)| \exists m, (m_1,m)\in L_{\cup_{j-2,k-2}} , (m,m_2)\in L_{\cap_{i,k-2}} \}=\\
&=&\{(m_1,m_2)| \exists m, \pi_j(m_1)=m, \pi_i(m_2)=m \}\\&=&\{  (m_1,m_2)| \pi_j(m_1)=\pi_i(m_2) \}
\end{eqnarray*}
By the remark after the Lemma \ref{2nbhd}, these projections are given by projection to the first and second factor in $\mathfrak{sl}(E_{\mu_1})\oplus \mathfrak{sl}(E_{\mu_2})\oplus \zeta$   
  and so $\pi_i\pi_j=\pi_j\pi_i$ from which the equality of the compositions follows.\\






As for \eqref{thelast} we must show that $L_{\cap_i} \circ L_{\cup_{i+1}}$ equals the diagonal $\Delta \subset \mathcal{Y}_{m}\times \mathcal{Y}_{m}$.
Using Lemma \ref{rotate_lag} we see that 
 $$h_{\sigma_{i}\sigma_{i+1}} L_{\cap_i}=L_{\cap_{i+1}}.$$ 
So we are reduced to showing that  $L_{\cap_i}\cap h_{\sig_{i}\sig{i+1}}(L_{\cap_i})\simeq \Delta$.
Let $\nu=(\mu_1,...,\mu_{2m+2})$  be  such that  $\mu_i+\mu_{i+1}+\mu_{i+2}=0$ and\\ $\bar{\nu}=(\mu_1,...\mu_{i-1},0,\mu_{i+3},...,\mu_{2m+2})$ also 
$\hat{\nu}=(\mu_1,...,\mu_{i-1},0,0,0,\mu_{i+3},...,\mu_{2m+2})$.
We take $\mathcal{Y}_{m+1}=\mathcal{Y}_{m+1, \nu}$ and $\mathcal{Y}_{m}=\cal{Y}_{m,\bar{\nu}}$. The latter can be identified  with $\cal{D}_{m+1,\hat{\nu}}$ which is  $(\chi^{-1}(\hat{\nu})\cap \cal{S}_{m+1})^{sub3,0}$.
  %
Lemma \ref{3nbhd} gives us a $\Bbb{C}^4$-bundle $\cal{L}$ over $\cal{S}_{m+1}^{sub3,0}$ 
with a local symplectomorphism $\phi$  from $\cal{L}$ to a neighborhood  $U$ of $\cal{S}^{sub3,0}$ in $\cal{S}_{m+1}$. 
We  identify $\cal{Y}_{m+1,\nu}$ 
 with its image under $\phi$ and so $\Delta\subset \cal{L}\times \cal{S}_{m+1}^{sub3,0}$.  $\cal{Y}_{m,\bar{\nu}}$ lies in the zero section and $L_{\cap_i}$ is the pullback of a  sphere bundle over the zero section to $\Delta$.
Lemma  \ref{norotate} tells us that $h_{\sigma_{i}\sig_{i+1}}$ is identity when restricted to $\Delta$ 
 therefore   $h_{\sig_{i,i+1}}(L_{\cap_i})$ is another sphere bundle over $\Delta$. We show that fibers of this two bundles over each point of $\Delta$ intersect at only one point so $L_{\cap_i} \cap h_{\sigma_{i}\sig_{i+1}}(L_{\cap_i})$ is Lagrangian isotopic  to $\Delta$. 
Since these Lagrangians have vanishing first cohomology groups   this isotopy is exact.\\

Let $L_1$and $L_2$  be fibers of $L_{\cap_i}$ and $h_{\sig_{i}\sig_{i+1}}(L_{\cap_i})$ over a point of $\Delta$. So \bq \label{lsigma}L_{2}=h_{\sig_{i}\sig_{i+1}}(L_1).\eq
By Lemma \ref{2lagrang}, $L_{\cap_i}$ is isotopic to $\Lambda_{d,\alpha_1}=\Delta  \times_{S^1} \Lambda_{\alpha}$ so  $L_1$ can be identified with $\Lambda_{\alpha_1}$. When we perform $h_{\sigma_{i}\sig_{i+2}}$, the leftmost and the rightmost zeros in Figure \ref{curvesalphabeta} (which are the ``fiberwise'' eigenvalues) get swapped; therefore $\mathbf{c}$ is sent to $\mathbf{c}'$ and so $L_2=\Lambda_{\mathbf{c}}$.
Since $\Lambda_{\mathbf{c}}=\bigcup_{r=0}^1   C_{d,z,\mathbf{c}(r)}$  and $ C_{d,z,a}$ is given by  $C_{d,z,a}=\{(b,c):  |b|=|c|, a^3-da-z=-bc \} $, 
  we see that since the curves $\mathbf{c} $ and $\mathbf{c}''$ intersect at only one point (i.e. the first root of $a^3-ad-z=0$) then $L_1=\Lambda_{\alpha_1}$ and $L_2=\Lambda_{\alpha_2}$ intersect at only one point so we are done. \\

We chose  the gradings  \eqref{gradingforcup} for $L_{\cap_i}$ to be a constant function $c$ and be the same for all $i$.
Let $S_{ix}$ be the fiber of $L_{\cap_i}$ over $x$. Then the fiber of $L_{\cap_{i;m}}\circ  L_{\cup_{i+1;m}}$ over $x$ equals $S_{ix}\circ S_{i+1x}^t$. Grading of $S_{i+1x}^t$ equal $1-\tilde{S}_{i+1x}$ by \eqref{neggrade}. Therefore the grading of the point  $S_{ix}\circ S_{i+1x}^t$ is $c+1-c=1$ which implies \eqref{thelast}.

\end{proof}

\begin{theorem}\label{wellfunc}  The assignment $\Phi$  in \eqref{Phi} gives a functor from the category of tangles to the symplectic category so it defines a graded symplectic valued topological field theory.
\end{theorem}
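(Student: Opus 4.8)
The plan is to verify the axioms of a functor from the category of tangles to the noncompact symplectic category of subsection \ref{tsfts}. On objects one puts $\Phi(m)=\mathcal{Y}_m$; on the identity tangle $id_m$, which is the trivial braid, $L_{id_m}$ is the graph of the identity symplectomorphism, i.e. the diagonal $\Delta_{\mathcal{Y}_m}$, with trivial grading shift since $id_m$ has no cups and writhe zero. Each correspondence $L_{T_i}$ is allowable (subsection \ref{func}), so $\Phi(T)$ is indeed a morphism of the noncompact symplectic category, and for a fixed decomposition the auxiliary choices entering each $L_{T_i}$ --- the curves $\gamma,\delta$, the constant $R$, and the perturbation of the almost complex structure --- were already seen there to alter $L_{T_i}$ only by an exact isotopy, hence not to affect $\Phi$ up to isomorphism of generalized correspondences. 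Compatibility with composition is built into \eqref{Phi}: concatenating two decompositions concatenates the associated sequences of Lagrangian correspondences, and the number of cups and the writhe are additive under composition of tangles, so $\Phi(\mathcal{T}\mathcal{T}')=\Phi(\mathcal{T})\#\Phi(\mathcal{T}')$ holds once well-definedness is known. The only substantive point is therefore that $\Phi(\mathcal{T})$ depends only on the equivalence class of $\mathcal{T}$, not on the chosen decomposition into elementary tangles.

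For this I would invoke Lemma \ref{tangledecomp}: any two decompositions of equivalent tangles into elementary pieces are joined by a finite sequence of the commutation relations of Yetter's lemma, so it suffices to check that $\Phi$ sends each such relation to an isomorphism of generalized correspondences. The far-commutation and braid relations among the $\sigma_i$ are exactly Lemma \ref{seidelth}. The relation $\sigma_i\sigma_i^t=id$ follows from Remark \ref{khsremark}: $h^{res}_{\sigma_i}$ and $h^{res}_{\sigma_i^t}$ are mutually inverse, so $L_{\sigma_i}\circ L_{\sigma_i^t}$ is the graph of the identity, namely $\Delta=L_{id}$, and the composition of two graphs is automatically embedded. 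The commutations of $\cap_i$ and $\cup_i$ past a distant $\sigma_j$, the Reidemeister-I relations $\sigma_i\cup_i=\cup_i$ and $\cap_i\sigma_i=\cap_i$ together with their transposes, and the slide relations $\sigma_i\cup_{i+1}=\sigma_{i+1}^t\cup_i$ are the content of Lemma \ref{rotate_lag}, while $\cap_i\cup_j=\cup_{j-2}\cap_i$ for $|i-j|>1$ and $\cap_i\cup_{i+1}=id_{m-1}$ are Lemma \ref{lag_commut}. In each case where the relation involves an honest composition of correspondences (rather than only an exact isotopy of the individual pieces), the cited lemma already checks that the composition is embedded --- transversality with the diagonal together with injectivity of the projection, verified there by identifying the relevant fibers with spheres meeting at a single point --- so that passing to the composition is one of the permitted moves of the noncompact symplectic category; and since the Lagrangians involved have vanishing first cohomology, the isotopies these lemmas produce are exact, as the equivalence relation on morphisms requires.

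What remains is to see that the grading shift $\{-m-w\}$ in \eqref{Phi} is compatible with every move, and this is where the real care lies. The braid moves change neither the cup count nor the writhe and are grading-neutral by Lemma \ref{seidelth}; the relation $\sigma_i\sigma_i^t=id$ removes one positive and one negative crossing and so leaves $w$ unchanged. For each of the remaining moves the change in the cup count and in the writhe is, by the choice of the shift $\{-m-w\}$, exactly cancelled by the local Dehn-twist shifts $\{\pm1\}$ recorded in \eqref{gradedehn}, \eqref{2tolast}, \eqref{last} and \eqref{thelast}; one verifies this move by move, using that the cap and cup Lagrangians were graded uniformly in \eqref{gradingforcup} and that $\Phi$ ignores the orientation of the underlying unoriented tangle, so that only the writhe and the cup count --- not the individual Lagrangians --- respond to the orientation. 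The main obstacle is thus not a single hard estimate but precisely this orchestration: confirming that the list of moves supplied by Lemma \ref{tangledecomp} is exhausted, that every ``pass to the embedded composition'' step is legitimate, and that the arithmetic of the cup count, the writhe and the Dehn-twist shifts closes up on each relation. Granting this, $\Phi$ is a well-defined grading-preserving functor, and hence, by the definitions of subsection \ref{tsfts}, a graded symplectic-valued topological field theory.
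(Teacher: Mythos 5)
Your proposal is correct and follows essentially the same route as the paper's proof: reduce to the Yetter moves via Lemma \ref{tangledecomp}, match each move against Lemmas \ref{seidelth}--\ref{lag_commut}, and check that the grading shifts in \eqref{2tolast} and \eqref{thelast} are exactly compensated by the changes in the cup count and writhe entering the shift $\{-m-w\}$. Your treatment is in fact somewhat more explicit than the paper's (e.g.\ spelling out $\sigma_i\sigma_i^t=id$ via Remark \ref{khsremark} and the embeddedness of the compositions), but it is the same argument.
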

\begin{proof}
Follows from \ref{tangledecomp} and \ref{seidelth} to \ref{lag_commut}. We see that difference in grading for each commutation relation gets canceled by the change in the writhe plus number of cups. The only commutation relations which involve grading shift are \eqref{2tolast} and \eqref{thelast} which happen to be the only ones involving change in $-m-w$.   For   \eqref{thelast}, $-m$ plus the degree shift is equal on both sides of the equation. Note that if $T$ contains $\cap_i(\sig_i^\epsilon)^\pm$  where $\epsilon=1,-1$  and $\pm$ is the writhe of $\sig _i$ then $\pm\epsilon$ has to be equal to $-1$.
This implies that $-w$ plus the grading shift is equal on both sides of \eqref{2tolast}.
\end{proof}

\subsection{ The invariant}

We can obtain tangle invariants at two levels from the symplectic valued topological field theory $\Phi$: a functor valued invariant and a group valued one.
 According to \cite{MWW} the \textit{generalized Fukaya category}   $\Fuk^\#(M)$ of a symplectic manifold $M$  is an $A_\infty$ category whose objects are compact generalized Lagrangian correspondences between a point and $M$ and morphisms between two such objects $\underline{L}_0$ and $\underline{L}_1$ are the elements of  Floer chain complex for  $\underline{L}_0^t \# \underline{L}_1$.
The $A_\infty$ structure on $Fuk^\#(M)$ is given by counting holomorphic ``quilted polygons''. More precisely the maps
\bq\label{mu_d}
\mu^d: CF(\underline{L}_0, \underline{L}_1)\otimes \cdots \otimes CF(\underline{L}_{d-1},\underline{L}_{d}) \rightarrow CF( \underline{L}_0, \underline{L}_d )   
\eq
are given by counting quilted $(d+1)$-gons whose boundary conditions are given by the $\underline{L}_i$.\\ 

 Here we need an enlargement of (generalized) Fukaya category of a Stein manifold to include  noncompact admissible Lagrangians.
 For a Stein manifold $M$ we denote by $\on{Fuk}^\#(M)$ the $A_\infty$ category whose objects are allowable proper generalized Lagrangian correspondences between $M$ and a point. Symplectic manifolds involved in these correspondences can be either Stein or compact.
Two such correspondences $\underline{L}_0$ and $\underline{L}_1$ satisfy the finite intersection property and we define the set of morphisms between them to be   $CF(\mathfrak{C}_c (\underline{L}_0^t \# \underline{L}_1) )$.
 Here 
we choose $c$ so as to  include the intersection points of the Lagrangians. 
 The $A_\infty$ structure is given in the same way as for the generalized Fukaya category of compact manifolds. Lemma \ref{compact} insures that the moduli spaces involved are compact.

  For an $(m,n)$  tangle $T$,  let $\Phi(T)=(L_{0,1},L_{1,2},...,L_{n-1,n})$ be as in \eqref{Phi}. We obtain an $A_\infty$ functor
\bq\label{funcvalued}\Phi_T^\#: \Fuk^\#(\cal{Y}_m)\rightarrow \Fuk^\#(\mathcal{Y}_n)\eq which is given at the level of objects by $\Phi_T^\#(\underline{L})=\underline{L}\#\Phi(T)$
for each $\underline{L}\in \Fuk^\#(\cal{Y}_m)$.  For details on the functor $^\#$ in general see \cite{MWW}. If $K$ is a link then 
we obtain a functor \bq\label{funcpoint}\Phi_K^\#:\Fuk^\#(pt)\rightarrow \Fuk^\#(pt). \eq\\
The group valued  tangle invariant is defined as follows.

\begin{definition}
 \bq\label{grpvalued}\mathpzc{Kh}_{symp}(T)=\bigoplus_{\substack{C\in\pz{C}_m\\C'\in \pz{C}_n}}  HF(L_C,\Phi(T),L_{C'}).\eq
\end{definition}

Each summand in the above direct sum is equal to the Floer cohomology of the Lagrangians
\begin{eqnarray*}
\mathcal{L}_0&=&L_C\times L_0\times L_{1,2}\times...L_{2k-1,2k}  \\
\mathcal{L}_1&=&L_{0,1}\times L_{2,3} \times ...\times L_{2k+1}\times L_{C'}
\end{eqnarray*}
 in $\cal{Y}=\cal{Y}^-_{n_0}\times\cal{Y}_{n_1}\times ...\times \cal{Y}_n $. If $\psi_i$ is the plurisubharmonic function on $\cal{Y}_i$ then $\cal{Y}$ is a Stein manifold with the plurisubharmonic function $\psi=\Sigma \psi_i$. 

\begin{lemma} The Lagrangians $\cal{L}_i$, $i=0,1$ are allowable. \end{lemma}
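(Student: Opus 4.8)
The plan is to establish allowability factor by factor and then to note that allowability of a product of exact Lagrangians is automatic with respect to the sum plurisubharmonic function. Recall that $\mathcal{L}_0 = L_C\times L_0\times L_{1,2}\times\cdots\times L_{2k-1,2k}$ and $\mathcal{L}_1 = L_{0,1}\times L_{2,3}\times\cdots\times L_{2k+1}\times L_{C'}$ live in $\mathcal{Y}=\mathcal{Y}^-_{n_0}\times\mathcal{Y}_{n_1}\times\cdots\times\mathcal{Y}_n$, that $\psi=\sum_i\psi_i$, and that a Lagrangian $L$ in a Stein manifold $M$ is $c$-allowable when it is exact and $\psi|_L$ has no critical points in $M_{\ge c}$. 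So the first task is to check each factor of $\mathcal{L}_0$ and $\mathcal{L}_1$ separately, and then to run a product argument.

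First I would treat the factors one at a time. The Lagrangians $L_C$ and $L_{C'}$ from crossingless matchings are compact and diffeomorphic to products of two-spheres (Section \ref{crossless}), so $H^1$ vanishes and they are exact; a compact exact Lagrangian in a Stein manifold is allowable. Any diagonal correspondence inserted to make the length of $\underline{L}$ odd is exact with zero primitive, and the restriction of $\psi$ to it is $2\psi_{n_i}$ on $\mathcal{Y}_{n_i}$, whose critical set is compact because the fibrewise critical locus of the Seidel--Smith function projects properly under $\chi$ (Section \ref{symplstr}); so it is allowable. The correspondence attached to an elementary braid is the graph of $h^{res}_\beta$, which is exact as the graph of an exact symplectomorphism and, once completed by the Liouville flow as in Section \ref{laginstein} (this completion being implicit in the way these partially-defined symplectomorphisms are used, cf.\ Section \ref{func}), is a Lagrangian with conical end, hence allowable; the asymptotic homogeneity \eqref{asshomog} is what makes this completion behave. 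Finally, for a cap or a cup, $L_{i,i+1}$ is produced by the relative vanishing cycle construction of Sections \ref{relvanish} and \ref{func} over a compact exact base Lagrangian; its first homology vanishes (so it is exact), and, the radius $\rho$ having been chosen so that $\{\psi=R\}$ lies inside the relevant ball, it agrees with a product sphere bundle outside a compact set. Since the $S^2$-fibres are compact and $\psi$ restricted to the base diagonal has compact critical locus, $\psi$ restricted to $L_{i,i+1}$ has no critical points outside a compact set, so $L_{i,i+1}$ is allowable.

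It then remains to see that if $L_i\subset M_i$ is $c_i$-allowable and exact for each $i$, then $\prod_i L_i\subset\prod_i M_i$ is allowable for $\sum_i\psi_i$. Exactness is immediate: if $\theta_i|_{L_i}=dK_i$ then $\theta|_{\prod_i L_i}=\sum_i\on{pr}_i^*(\theta_i|_{L_i})=d(\sum_i\on{pr}_i^*K_i)$. For the critical points, since $T_{(x_i)}\prod_i L_i=\prod_i T_{x_i}L_i$, a point $(x_i)$ is critical for $\sum_i\psi_i|_{L_i}$ precisely when each $x_i$ is critical for $\psi_i|_{L_i}$; hence $\on{Crit}(\psi|_{\prod_i L_i})=\prod_i\on{Crit}(\psi_i|_{L_i})$. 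Each $\on{Crit}(\psi_i|_{L_i})$ is closed in $L_i$ and lies in the compact sublevel set $\{\psi_i\le c_i\}$, hence is compact, so the product is compact and lies in $\{\psi\le C\}$ with $C:=\sum_i\max_{\on{Crit}(\psi_i|_{L_i})}\psi_i$; thus $\prod_i L_i$ is $C'$-allowable for any $C'>C$. Applying this to the factorizations of $\mathcal{L}_0$ and $\mathcal{L}_1$ would then give the claim. The main obstacle is the first step, namely ruling out critical points of $\psi$ at infinity along the cup/cap and braid correspondences; this is exactly where the care built into the Seidel--Smith symplectic structure (properness of the fibrewise critical set, asymptotic homogeneity) and into our cup/cap construction (product structure outside a compact set) enters, and once these are in hand the product step is routine.
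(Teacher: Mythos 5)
Your argument is correct and follows essentially the same route as the paper: exactness from simple connectivity (vanishing $H^1$), plus the observation that critical points of $\psi$ restricted to the product split factor-wise, so it suffices that each (possibly noncompact) factor correspondence has compact critical locus for the restricted plurisubharmonic function. The only cosmetic difference is that where the paper disposes of the noncompact cap/cup factors by appealing to algebraicity of the $\mathcal{Y}_i$ and $\psi_i$, you use the product structure at infinity of the vanishing-cycle construction together with properness of the fibrewise critical set, which rests on the same features of the Seidel--Smith setup.
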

\begin{proof}
 The Lagrangians are exact since they are simply connected submanifolds of exact manifolds.
A point $m=(m_0,...,m_{2k+1})$ of tangency of $\mathcal{L}_0$ to a level set of $\psi$ is a critical point of $\psi|_{\mathcal{L}_0}$. Since $\psi$ is the sum of the plurisubharmonic functions on each $\cal{Y}_i$, $m_i$ is a critical point of $\pi_i^* \psi|_{L_{i,i+1}}$. If $L_{i,i+1}$ is noncompact, it is a vanishing cycle on the diagonal either in  $\cal{Y}_m\times\cal{Y}_{m+1}$ or in $\cal{Y}_{m+1}\times\cal{Y}_{m}$. In the first case $m_i$ is a critical point of $\pi_i^* \psi=\psi_i$ on $\cal{Y}_m$. Since $\cal{Y}_i$ and $\psi_i$ are algebraic, the critical point set is compact. The second case is similar.
\end{proof}

 \begin{theorem}\label{khswelldef}
 For any tangle $T$, $\khs(T)$ is well-defined and is independent of the decomposition of $T$ into elementary tangles.
  \end{theorem}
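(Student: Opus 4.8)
The plan is to deduce the theorem from Theorem~\ref{wellfunc} together with the Stein-manifold version of the Functoriality Theorem and the conical-Hamiltonian invariance of Floer cohomology supplied by Proposition~\ref{floerwell}.

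First I would establish that each summand of \eqref{grpvalued} is a well-defined graded abelian group. Fix $C\in\pz{C}_m$ and $C'\in\pz{C}_n$ and form $\cal{L}_0,\cal{L}_1\subset\cal{Y}$ from $L_C$, $\Phi(T)$ and $L_{C'}$ as in \eqref{L0}--\eqref{L1}. The preceding lemma shows these are allowable, and the concatenation satisfies the finite intersection condition because one of its correspondences, $L_C$, is compact (it is diffeomorphic to a product of spheres) while all the others are proper (graphs of the $h^{res}_\beta$ and the cap/cup sphere bundles both have compact fibres). Hence Proposition~\ref{floerwell} applies and $HF(\mathfrak{C}_c(\cal{L}_0),\mathfrak{C}_c(\cal{L}_1))$ is well-defined and, for $c$ large, independent of $c$. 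I would also note that the auxiliary choices in the construction of the cap/cup correspondences (the parameter $R$ and the connecting curves $\gamma,\delta$) and of $L_C$ alter $\cal{L}_0,\cal{L}_1$ only by exact Lagrangian isotopies, hence — after applying $\mathfrak{C}_c$ and invoking the lemmas of Section~\ref{laginstein} — only by conical Hamiltonian isotopies, which by Proposition~\ref{floerwell} leave $HF$ unchanged. Since the index set $\pz{C}_m\times\pz{C}_n$ depends only on $(m,n)$, this gives well-definedness of $\khs(T)$ once a decomposition of $T$ is fixed.

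Next I would prove independence of the decomposition. By Theorem~\ref{wellfunc}, two decompositions of $T$ yield generalized correspondences $\Phi(T)$ that are connected, inside the noncompact symplectic category, by a finite chain of the two moves generating its equivalence relation: (i) replacing some $L_{i,i+1}$ by an exact-isotopic correspondence; and (ii) replacing an adjacent pair $L_{i-1,i},L_{i,i+1}$ with embedded composition by that composition, or vice versa. I would show each move leaves every summand $HF(L_C,\Phi(T),L_{C'})$ unchanged. For a move of type (i), concatenating the fixed $L_C,L_{C'}$ onto the ends turns it into an exact Lagrangian isotopy of $\cal{L}_0$ or $\cal{L}_1$ inside the Stein manifold $\cal{Y}$; by Section~\ref{laginstein} this is covered by a conical Hamiltonian isotopy of $\mathfrak{C}_c(\cal{L}_i)$, so Proposition~\ref{floerwell} provides an isomorphism of the Floer groups. (When the isotoped correspondence is a factor of an embedded composition occurring elsewhere in $\Phi(T)$, the remark following the Stein version of the Functoriality Theorem still guarantees invariance, even though the composed Lagrangians need not themselves be Hamiltonian isotopic.) For a move of type (ii), after concatenation with $L_C,L_{C'}$ one is exactly in the setting of the proposition following Theorem~\ref{fctr}; its hypotheses hold because the $L_{i,i+1}$ and the Lagrangians \eqref{L0}, \eqref{L1} are simply connected (the $\cal{Y}_m$ have $H_1=0$ and vanishing Chern classes, the cap/cup Lagrangians are sphere bundles over such pieces, and graphs of the $h^{res}_\beta$ inherit this), hence oriented and relatively spin; they are allowable; and finite intersection holds since $L_C$ is compact. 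Exactness of all the $L_{i,i+1}$ excludes bubbling, so the minimal-Maslov-number hypothesis of Theorem~\ref{fctr} is unnecessary, and \eqref{functorialityeq} delivers the isomorphism. The grading shift $\{-m-w\}$ in \eqref{Phi} transforms correctly under each move — this is precisely the computation at the end of the proof of Theorem~\ref{wellfunc}, where the degree shifts in \eqref{2tolast} and \eqref{thelast} cancel against the changes in the number of cups and the writhe — so these isomorphisms are grading-preserving. Taking the direct sum over $C\in\pz{C}_m$ and $C'\in\pz{C}_n$ then yields an isomorphism between the two versions of $\khs(T)$.

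I expect the main obstacle to be the bookkeeping ensuring that allowability, properness and the finite intersection condition persist along the whole chain of moves relating the two decompositions, and that the Stein version of the Functoriality Theorem can legitimately be invoked at each embedded composition that occurs there — in particular the point, flagged in the remark after that proposition, that an exact isotopy of one factor of an embedded composition need not produce Hamiltonian-isotopic compositions yet still preserves Floer cohomology. The grading compatibility, by contrast, is routine given Theorem~\ref{wellfunc}.
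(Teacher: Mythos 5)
Your proposal is correct and follows essentially the same route as the paper: well-definedness of each summand via Proposition~\ref{floerwell} (with the finite intersection condition coming from compactness of $L_C$ and properness of the other correspondences, and the parameter $R$ chosen so that all intersection points lie in the relevant sublevel set), and independence of the decomposition by combining Theorem~\ref{wellfunc} with the Stein version of the Functoriality Theorem. You simply spell out in more detail the chain of moves and the hypothesis checks that the paper's brief proof leaves implicit.
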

  \begin{proof}
 The finite intersection condition holds since the Lagrangian correspondences $L_{i,i+1}$ are proper (cf. \ref{propercorr}). We take the parameter $R$ in the construction of the  $L_{\cap_i}$ and $L_{\cup_i}$ so that all the intersection points are included in $M_{\leq R}$. Therefore by Proposition \ref{floerwell} the above Floer cohomology is well-defined.  Note that since our Lagrangians are products of  2-spheres, they have a unique spin structure so the Floer cohomology groups above are  modules over $\bb{Z}$. (cf. Theorem ``Fs'' in \cite{FOorientation}.)   
  Independence of the decomposition follows from the  Functoriality Theorem and theorem \ref{wellfunc}.
     \end{proof}

   It is clear that if $K$ is a $(0,0)$-tangle, i.e. a link, then the above invariant equals the original invariant of Seidel and Smith
  \eqref{defss}.


\subsection{Some computations}
We compute $\khs$ for elementary tangles. Set $$\pz{V}=H^*(S^2)\{-1\}.$$ Remember that $\oplus$ denotes unlinked disjoint union.
 Let $\sig^+_{i;m}$ and $\sig^-_{i;m}$ be as in the Figure \ref{posnegbraid}. 
%
%
If Lagrangians $L,L'\subset X\times \bb{C}^3$ are obtained from Lagrangians $K,K'\subset X$ by the relative vanishing cycle construction then we can isotope $L$ and $L'$ to $K\times S^2$ and $K'\times S^2$ inside a compact subset. Therefore we get \bq\label{Kuneth}  HF(L',L')=HF(K\times S^2,K'\times S^2)=HF(K,K')\otimes H^*(S^2)\eq
For a tangle $T$ it follows from the above formula that \bq\label{Kunethtang} \khs(T\oplus \bigcirc)=\khs(T)\otimes H^*(S^2)\{-1\}= \khs(T)\otimes \pz{V}.\eq The $-1$ degree shift here comes from the cup in $\bigcirc$. More generally it follows from the commutation relations in section \ref{func} that for any two tangles $kTl,mT'n$ we have
\bq\Phi^\#(T\oplus T')=\Phi^\#(T\oplus id_m)\circ \Phi^\#(id_l\oplus T')=\Phi^\#(id_k\oplus T')\circ  \Phi^\#(T\oplus id_n) .\eq

This gives an injection
\bq \khs(T\oplus T')\hookrightarrow \khs(T)\otimes \khs(T')\eq
For  links $L,L'$ this becomes an isomorphism    \bq\khs(L\oplus L')=\khs(L)\otimes\khs(L').\eq

Following Khovanov \cite{functorvalued} we define \bq\label{Hn}H^n=\khs(id_n)=\bigoplus_{\substack{C,C'\in\pz{C}_n}} HF(L_C^t,L_{C'}).\eq
It follows from \eqref{Kuneth} and \ref{crosslesstocup}  that each summand in the above direct sum equals $H^{*}(S^2)^{\otimes k}$ where $k$ is the number of circles made by attaching $C^t$ to $C'$. Therefore \eqref{Hn} 	equals Khovanov's $H^n$ as an abelian group.\\

We now give a recursive decomposition of $H^m$. Denote by $\P_m'$ the subset of $\P_m$ consisting of elements which contain $\cap_1$, i.e. elements which contain an arc between points 1 and 2,  and denote by $\P_m''$ its complement. (1 can be replaced with any $1\leq i\leq 2m-1$.) $\P_m'$ is in one-to-one correspondence with $\P_{m-1}$. We have a map $\P_m''\rightarrow \P_{m-1}$ $a\mapsto a'$ given by joining the two strands of $a$ that stem from 1 and 2. Let $\P\P_m^1\subset \P_m''\times \P_m''$ be the subset of all $(a,b)$ such that the arcs passing through points $1$ and $2$ in $a^tb$ form a single circle. If $(a,b)$ is in the complement of $\P\P_m^1$ then the arcs passing through points $1$ and $2$ in $a^tb$ form two circles. \\ 

Let  $a,b\in \P_m'$. If $\bar{a}$ denotes $a$ with the $\cap_1$ removed then we have $HF(L_a^t,L_b)=HF(\bar{a},\bar{b})\otimes \pz{V}\{1\}$
by \eqref{Kuneth}. This contributes a summand of $H^{m-1}\otimes\pz{V}\{1\} $ to $H^m$.
Set $${\tilde{H}^{m}}=\bigoplus_{a\in \P_m', b\in\P_m''} HF(L_a^t,L_b).$$  The embedded circle $C$ in $a^t b$  which passes through points 1 and 2 contributes a factor of $\pz{V}\{1+i\}$ to $HF(L_a^t,L_b)$ where $i$ is the number of other pairs of points $2k-1,2k$ which $C$ passes through. This follows from  \eqref{thelast}. We can set
 $$\tilde{H}^{m}={H^m}'\otimes \pz{V}\{1\}$$ 
where $\pz{V}\{1\}$ is the ``local'' contribution of the circle containing $\cap_1$ or $\cup_1$. This means that if $a\in \P_m', b\in\P_m''$ and we modify the strands of $a^t b$ passing through 1 and 2 only in a small \nbhd of the points 1 and 2 then we alter only the second factor in $ {H^m}'\otimes \pz{V}\{1\} $.
Also denote by $\tilde{H}^m_1$ and $\tilde{H}^m_2$ the contribution of $\P\P_m^1$ and its complement to $H^m$. Again we can write $\tilde{H}^m_1=H^m_1\otimes \pz{V}\{1\}$ and $\tilde{H}^m_2=H^m_2\otimes \pz{V}\{1\}\otimes \pz{V}\{1\}$ where $\pz{V}\{1\}$ resp. $\pz{V}\{1\}\otimes\pz{V}\{1\}$ are ``local''contributions from the single circle resp. the two circles formed by arcs passing through 1 and 2.
Therefore we get

\bq \label{HmHm-1} H^m=\Bigl(\left(H^{m-1}   \oplus {H^m}' \oplus {H^m}'\oplus H^m_1 \right)\otimes\pz{V}\{1\} \Bigr)\bigoplus H^m_2\otimes\pz{V}\{1\}\otimes \pz{V}\{1\}.\eq
as abelian groups.

\begin{lemma} We have
\begin{eqnarray*}
&\khs&(\sig_{i;m}^\pm)=\\
&& \Bigl(\left(H^{m-1}   \oplus {H^m}' \oplus {H^m}'\oplus H^m_1 \right)\otimes\pz{V}\{1\mp2\} \Bigr)\bigoplus H^m_2\otimes \pz{V}\{2\}.\\
&\khs&(\cup_{i;m})=(H^{m-1}\otimes \pz{V}) \bigoplus \oplus_{a\in \P_m'', b\in \P_{m-1}} HF(L_{a'},L_b) \\
&\khs&(\cap_{i;m})=(H^{m-1}\otimes \pz{V}\{1\})\bigoplus  \oplus_{a\in \P_m'', b\in \P_{m-1}} HF(L_{a'},L_b).
\end{eqnarray*}
\end{lemma}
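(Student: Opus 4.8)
The plan is to unwind $\khs(T)=\bigoplus_{C,C'}HF(L_C,\Phi(T),L_{C'})$ summand by summand, use the Functoriality Theorem (Theorem~\ref{fctr}, in the Stein version established right after it) to replace each $HF(L_C,\Phi(T),L_{C'})$ by the Floer cohomology of the single pair of Lagrangians obtained by composing $L_C$ with the elementary correspondence $L_T$, identify that composite from the explicit sphere-bundle descriptions of $L_{\cap_{i;m}}$ and $L_{\cup_{i;m}}$ in Section~\ref{func} (using Lemma~\ref{crosslesstocup} to know that $L_C$ itself is such a composite of cap/cup Lagrangians), and then collect the grading shifts dictated by \eqref{Phi}, \eqref{gradingforcup} and \eqref{gradedehn}. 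As remarked in the text, I take $i=1$, and I split the sum over $\P_m$ into matchings containing the arc $\cap_1$ (the set $\P_m'$) and the rest ($\P_m''$), exactly as in the derivation of \eqref{HmHm-1}.

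For $\cup_{i;m}$ we have $\Phi(\cup_{i;m})=L_{\cup_{i;m}}\{-1\}$ (one cup, no writhe), and functoriality gives $HF(L_C,L_{\cup_{i;m}},L_{C'})\cong HF(L_C\circ L_{\cup_{i;m}},L_{C'})$. Since $L_{\cup_{i;m}}$ is the restriction to the diagonal of the pull-back of a $2$-sphere bundle $V\to\cal{Y}_{m-1}$, the composite $L_C\circ L_{\cup_{i;m}}$ is read off from $C$: if the arc of $C$ through the $i$th pair of endpoints closes up against the cup ($C\in\P_m'$) the composite is $L_{\bar C}$ times the relative vanishing-cycle sphere, and \eqref{Kuneth} contributes a factor $H^*(S^2)$ which, together with the $\{-1\}$ shift, is exactly $\pz{V}$; if the two arcs of $C$ at those endpoints fuse into one ($C=a\in\P_m''$) the composite is just $L_{a'}$, with no extra factor. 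Summing over $C\in\P_m'$ (which is in bijection with $\P_{m-1}$) yields $H^{m-1}\otimes\pz{V}$, and summing over $C=a\in\P_m''$ yields $\bigoplus_{a\in\P_m'',b\in\P_{m-1}}HF(L_{a'},L_b)$, as claimed. The cap $\cap_{i;m}$ is symmetric: now $\Phi(\cap_{i;m})=L_{\cap_{i;m}}$ (no cup, no crossing), $L_C\circ L_{\cap_{i;m}}$ fills in the new arc to produce some $C''\in\P_m'$, the grading \eqref{gradingforcup} of $L_{\cap_1}$ is chosen exactly so that the circle created when the target matching also carries the $i$th arc contributes $\pz{V}\{1\}$, just as in $H^m$ (hence the summand $H^{m-1}\otimes\pz{V}\{1\}$), and the remaining terms $HF(L_{C''},L_b)$ with $b\in\P_m''$ are rewritten, again by functoriality together with the $\cup$-composition just computed, as $HF(L_C,L_{b'})$, giving once more $\bigoplus_{a\in\P_m'',b\in\P_{m-1}}HF(L_{a'},L_b)$.

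For $\sigma_{i;m}^\pm$ we have $\Phi(\sigma_{i;m}^\pm)=\on{graph}(h^{res}_{\sigma_i})\{\mp1\}$, so $\khs(\sigma_{i;m}^\pm)=\bigoplus_{C,C'}HF\big(h_{\sigma_i}^{\pm1}(L_C),L_{C'}\big)\{\mp1\}$, and by \eqref{monodromydehn} $h_{\sigma_i}$ is, on the relevant region, the fibred Dehn twist $\tau_{V_i}$; I compare term by term with $H^m=\bigoplus HF(L_C,L_{C'})$. When $C\in\P_m'$ the Lagrangian $L_C$ is, fibrewise over $\cal{Y}_{m-1}$, the vanishing-cycle sphere bundle $V_i$ itself, so $\tau_{V_i}^{\pm1}$ fixes it up to the shift of \eqref{gradedehn} (equivalently \eqref{2tolast}); combined with the $\{\mp1\}$ from \eqref{Phi} this turns the part of $H^m$ indexed by $C\in\P_m'$ — which is $(H^{m-1}\oplus{H^m}')\otimes\pz{V}\{1\}$ by \eqref{HmHm-1} — into $(H^{m-1}\oplus{H^m}')\otimes\pz{V}\{1\mp2\}$. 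When $C\in\P_m''$ one twists a Lagrangian not containing $V_i$; passing to the $A_2$ fibre model of Lemma~\ref{3nbhd} and Section~\ref{fibered}, $\tau_{V_i}$ acts fibrewise on the three matching-path spheres $\mathbf c,\mathbf c',\mathbf c''$ of Figure~\ref{curvesalphabeta}, and a direct computation of the fibrewise Floer groups from their intersection pattern together with \eqref{dehntwistequ} shows: if the resolution at the crossing remains a single circle — $C'\in\P_m'$ (contributing ${H^m}'$) or $(C,C')\in\P\P_m^1$ (contributing $H^m_1$) — the twist changes only gradings and one gets $({H^m}'\oplus H^m_1)\otimes\pz{V}\{1\mp2\}$; if instead it produces two circles — $(C,C')$ in the complement of $\P\P_m^1$ — the twist identifies the two circles, so the fibrewise group is a single $H^*(S^2)$ and the factor $\pz{V}\{1\}\otimes\pz{V}\{1\}$ of \eqref{HmHm-1} is replaced by $\pz{V}\{2\}$, contributing $H^m_2\otimes\pz{V}\{2\}$. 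Assembling the four pieces gives the stated formula.

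The two capping computations should be routine once the composites $L_C\circ L_{\cup_{i;m}}$ and $L_C\circ L_{\cap_{i;m}}$ are identified. I expect the crossing case to be the real work, with two delicate points: (a) making precise the reduction, uniformly over $\cal{Y}_{m-1}$, of the fibred Dehn twist $\tau_{V_i}$ to an honest Dehn twist in the $A_2$ fibre — this is exactly what Lemma~\ref{3nbhd}, Section~\ref{fibered} and \eqref{dehntwistequ} are set up to allow; and (b) the exact grading bookkeeping, so that the two resolutions assemble with the precise shifts $\{1\mp2\}$ and $\{2\}$. In particular, the one point that genuinely needs care is why the double-circle contribution $\pz{V}\{1\}\otimes\pz{V}\{1\}$ present in $H^m=\khs(id_m)$ is replaced, once a crossing is inserted, by the single factor $\pz{V}\{2\}$ — the symplectic incarnation of the Khovanov skein relation.
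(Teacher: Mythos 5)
Your treatment of $\cup_{i;m}$ and $\cap_{i;m}$, and of the summands of \eqref{HmHm-1} indexed by $C\in\P_m'$ in the braid case, is fine and essentially the paper's argument (Künneth as in \eqref{Kunethtang}, locality of the circle factor, and the shift \eqref{gradedehn}/\eqref{2tolast} combined with the writhe shift from \eqref{Phi}). The gap is in the remaining braid summands, above all the two-circle piece $H^m_2$. There you assert that ``a direct computation of the fibrewise Floer groups from their intersection pattern together with \eqref{dehntwistequ} shows'' that the factor $\pz{V}\{1\}\otimes\pz{V}\{1\}$ is replaced by $\pz{V}\{2\}$, but no such computation is given, and the proposed venue does not obviously apply: the local configuration for $(C,C')$ outside $\P\P_m^1$ involves four strands (two circles meeting the crossing), whereas the model of Lemma \ref{3nbhd} and Section \ref{fibered}, and the relation \eqref{dehntwistequ}, only govern three eigenvalues colliding and a pair of matching-path spheres meeting in a single point. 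Saying ``the twist identifies the two circles'' names the expected answer (the skein-type merging) rather than proving it, and you yourself flag this as the point that genuinely needs care without resolving it.

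The paper closes exactly this point differently: it isolates the local four-strand tangle $T_0$ (Figure \ref{proofofkhsig}), observes it is equivalent to $\cap_{1,2}\cap_{3,4}\,\sigma^\pm\,\cup_{1,2}\cup_{3,4}$, and then uses the already-established commutation relation \eqref{last} of Lemma \ref{rotate_lag} to slide the crossing into the form $\cap_{1,4}\cap_{2,3}\,(\sigma^\pm)^{-1}\cup_{1,2}\cup_{3,4}$, after which \eqref{2tolast} absorbs the braid into a grading shift and the cap--cup cancellation \eqref{thelast} of Lemma \ref{lag_commut} yields $HF(L_\cap,L_\cup)$ with total shift $\pz{V}\{2\}$, independent of the sign. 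A similar remark applies to your single-circle $\P_m''$ summands: rather than a new fibrewise Dehn-twist computation, the paper treats all four single-circle summands uniformly by identifying the local factor with $HF(L_{\cap_i}L_{\sigma^\pm},L_{\cup_i})\{-w\}$ and invoking \eqref{2tolast}. To repair your argument you should either reproduce this reduction via the commutation relations, or actually carry out (and justify the locality of) the four-strand fibre computation you allude to, including the grading bookkeeping that produces $\{2\}$ independently of $\pm$.
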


\begin{proof}
The last two equalities are immediate consequences of \eqref{Kunethtang}. The computation for the first equality is similar to that of the rings $H^m$. The only difference comes from the existence of braids $\sig_{i;m}^\pm$. As stated before if we modify the strands of $a^t$ passing through 1 and 2 in a small \nbhd of these two points then only the factors  $\pz{V}\{1\}$  and $\pz{V}\{1\}\otimes \pz{V}\{1\}$ in \eqref{HmHm-1} change. In the first four direct summands of the decomposition \eqref{HmHm-1} the component of $a^t\sig_{i;m}^\pm b$ containing the braid contributes the following.\\

$HF(L_{\cap_i} L_{\sig_{i;m}^\pm},L_{\cup_i})\{-w(\sig_{i;m}^\pm)\}=HF(L_{\cap_i}\{\mp 1\},L_{\cup_i})\{\mp 1\}=\pz{V}\{1\mp 2\}$\\

The second to last equality is because $\sigma_{i;m}^+=\sig_{i;m}$ and $\sig_{i;m}^-=\sig_{i;m}^{-1}$ when we ignore the orientation.
In the last direct summand the component of $a^t\sig_{i;m}^\pm b$ containing the braid looks locally like the $T_0$ in Figure \ref{proofofkhsig}. (The figure depicts the case of $\sig_{i;m}^+$).  It is equivalent to $\cap_{1,2}\cap_{3,4}(\sig_{i;m}^\pm)\cup_{1,2} \cup_{3,4}$. We can use the commutation relation  \eqref{last} to get $\cap_{1,4}\cap_{2,3}{\sig_{i;m}^\pm}^{-1} \cup_{1,2}\cup_{3,4}$. Therefore the contribution of $T_0$ is
\begin{eqnarray*}
&HF&(L_{\cap_{1,4}}L_{\cap_{2,3}}L_{{\sig_{i;m}^\pm}^{(-1)}}, L_{\cup_{1,2}}L_{\cup_{3,4}})\{w(\sig_{i;m}^\pm)\}=\\
 &HF&(L_{\cap_{1,4}}L_{\cap_{2,3}}\{\pm 1\},L_{\cup_{1,2}}L_{\cup_{3,4}})\{w(\sig_{i;m}^\pm)\}=\\
&HF&(L_\cap,L_\cup)\{ 1\}\{\pm1 -w(\sig_{i;m}^\pm) \}=\pz{V}\{2\}.
\end{eqnarray*}
Where we have used commutation relations \eqref{2tolast} and\eqref{thelast}.

\end{proof}

\begin{figure}[ht]
\includegraphics[scale=0.8]{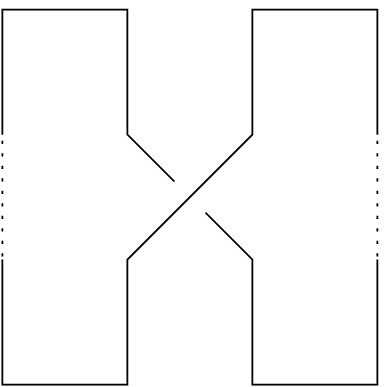}
\caption{   }
\label{proofofkhsig}
\end{figure}

\begin{remark}
These values agree with Khovanov invariant (after collapsing the bigrading) as abelian groups.
It turns out that   $H^n$ has a canonical ring structure for each $n$ and
the group $\mathpzc{Kh}_{sympl}(T)$, for a $(m,n)$-tangle $T$, is  a bimodule over the  $(H^m,H^n)$.
In a subsequent paper
  we  define a homomorphism  \bq\label{tensor}\mathpzc{Kh}_{sympl}(T')\otimes_{H^j}  \mathpzc{Kh}_{sympl}(T)\rightarrow \mathpzc{Kh}_{sympl}(T'\circ T)\eq for any two  tangles $iTj$ and $jT'k$.

\end{remark}

\bibliographystyle{plain} \bibliography{biblio}

\verb"Department of Mathematics, Rutgers University, New Brunswick, NJ"

 \verb"rezarez@math.rutgers.edu"

\end{document}